\documentclass[12pt,reqno]{amsart}
\usepackage{amsmath,amssymb,amsthm,mathrsfs}

\usepackage{graphicx, cite, times}
\usepackage{cases}
\usepackage{xcolor}
\usepackage{appendix}
\usepackage{enumerate}
\usepackage{soul}
\usepackage[colorlinks, linkcolor=blue, citecolor=blue]{hyperref}
\usepackage{xpatch}
\usepackage{bm}
\soulregister{\cite}7
\usepackage{enumitem} 
\usepackage{appendix}
\setlist[enumerate,1]{label=\qquad Case \arabic*:}

\theoremstyle{definition}
\newtheorem{theo}{Theorem}[section]
\newtheorem{coro}[theo]{Corollary}
\newtheorem{lemm}[theo]{Lemma}
\newtheorem{prop}[theo]{Proposition}
\newtheorem{rema}[theo]{ Remark}

\newtheorem{assu}{Assumption}
\numberwithin{equation}{section}

\begin{document}
\title[Shape Derivatives for  Maxwell's Equations] {Shape Derivatives for  Maxwell's Equations with Nonlinear Boundary Conditions
}

\author{Chao Deng}
\address{School of
Mathematics and Statistics, Center for Mathematics and
Interdisciplinary Sciences, Northeast Normal University, Changchun, Jilin 130024, P.R.China. }
\email{dengchao101@nenu.edu.cn}

\author{Yixian Gao}
\address{School of
Mathematics and Statistics, Center for Mathematics and
Interdisciplinary Sciences, Northeast Normal University, Changchun, Jilin 130024, P.R.China. }
\email{gaoyx643@nenu.edu.cn}

\thanks{The research of Y. Gao was supported by the NSFC
(project number, 12371187) and Science and Technology Development Plan Project of Jilin Province (project number, 20240101006JJ)}

\subjclass[2020]{35Q61, 35B30, 49J50, 78M50}

\begin{abstract}
This paper develops a trace-regular variational framework for time-harmonic Maxwell scattering problems involving pointwise nonlinear boundary and interface responses. We investigate three canonical classes of models: nonlinear impedance, nonlinear perfect electric conductor, and nonlinear transmission conditions. Since the standard Maxwell tangential trace belongs to a space of negative order, the nonlinearities are formulated in refined functional spaces where the tangential electric field admits an $L^2(\Gamma)$-trace. Under the assumption of a sufficiently small Lipschitz constant for the nonlinear response, we establish the well-posedness of the direct problems via fixed-point arguments leveraging the mapping properties of the associated linear Maxwell operators.
Within this framework, we perform a rigorous sensitivity analysis of the electromagnetic fields with respect to perturbations of the scattering interface. By employing the covariant Piola transform, we prove the continuity and Fr\'echet differentiability of the pulled-back solutions with respect to domain variations. The material derivative is characterized as the unique solution to a corresponding $\mathbb{R}$-linearized Maxwell system, and the shape derivative is shown to satisfy explicit boundary or interface conditions for each of the three nonlinear models. We further demonstrate that the resulting sensitivity expressions possess the Hadamard structure, depending exclusively on the normal component of the boundary deformation. The resulting derivative characterizations provide a mathematical
basis for subsequent adjoint-based sensitivity analysis, shape optimization,
and gradient-driven inverse reconstruction.
\end{abstract}
\keywords{Maxwell's equations, nonlinear boundary conditions, shape derivatives, inverse scattering, well-posedness, variational methods}

\maketitle

\section{Introduction}

The inverse scattering problem, which entails reconstructing the geometry or boundary physical properties of an obstacle from measurements of the scattered field, represents a fundamental challenge in wave propagation. In the linear regime, a comprehensive mathematical theory has been established \cite{Colton2013, Kirsch2008}, underpinned by the variational analysis of Maxwell's equations in Hilbert spaces \cite{BuffaCostabelSheen2002} and the systematic framework of shape calculus \cite{SokolowskiZolesio1992, Delfour2011}. However, in numerous emerging applications, the interaction between the electromagnetic wave and the scatterer transcends linear descriptions, necessitating a rigorous treatment of Maxwell scattering problems governed by nonlinear boundary and interface responses.

Nonlinear boundary responses arise naturally in the modeling of thin coatings, nonlinear dielectric layers, and engineered media. For thin coatings, effective boundary conditions are typically derived through the asymptotic reduction of volumetric coating models \cite{Senior1995, Haddar2002}. While linear thin-layer approximations are well-established \cite{Ammari1998, Bendali1996, Durufle2006}, the inclusion of nonlinear effects in coatings and dielectric layers leads to more intricate boundary or interface laws \cite{Norgren1999, Shestopalov2010, Smirnov2004}. Further physical impetus for these models is provided by recent advancements in metamaterials and engineered surfaces \cite{engheta2006metamaterials, Kadic2019}. In these configurations, the effective response often exhibits a non-trivial dependence on the field strength, characterized by pointwise nonlinearities that significantly complicate the analytical and numerical treatment of the underlying vector wave equations \cite{Norgren1999, Shestopalov2010, Smirnov2004}.

The preponderance of existing mathematical research on nonlinear scattering has focused on scalar models, exemplified by nonlinear Helmholtz equations and nonlinear inclusions \cite{Griesmaier2022, Lechleiter2011}. Nonlinear boundary conditions have also been investigated within the context of acoustic wave scattering, particularly through the use of boundary integral and convolution quadrature techniques \cite{BanjaiLubich2019, Banjai2016}. The vector electromagnetic setting, however, is considerably more involved; the presence of the curl operator and the fact that natural boundary quantities are tangential traces, often residing in spaces of negative order necessitate a more sophisticated functional analytic approach.

Shape sensitivity for linear scattering problems has been extensively studied using both boundary integral and variational methods \cite{Hettlich1995, Kirsch1989, Kress2018}. In the specific case of electromagnetic obstacle scattering, differentiability results have been established for obstacles subject to linear impedance conditions via boundary integral equations \cite{HaddarKress2004}. Within the variational framework for time-harmonic Maxwell equations, the curl-preserving covariant Piola transformation serves as a fundamental tool for comparing fields across perturbed domains and for differentiating curl-related bilinear forms on a fixed reference configuration \cite{Hettlich2012}. While recent advances have led to the development of high-order shape Taylor expansions and robust computational procedures for scattering problems under classical (linear) boundary conditions \cite{bao2025computation, bao2026shape}, the extension of these sensitivity analysis techniques to Maxwell systems featuring pointwise nonlinear boundary and interface responses remains a largely unexplored territory.

The principal technical challenge in the nonlinear electromagnetic setting lies in the intricate coupling between geometric variations and nonlinear tangential trace operators. A representative nonlinear impedance condition involves a boundary response of the form
\begin{equation*}
    \mathbf{g}(\mathbf{x}, \gamma_T \mathbf{E}),
\end{equation*}
where $\gamma_T \mathbf{E}$ denotes the tangential component of the electric field. Under a domain perturbation, this term is affected by a multifaceted variation involving the field itself, the unit normal vector, the tangential projection operator, the surface Jacobian, and the $\mathbb{R}$-linear derivative of the nonlinear response map. Critically, for a general field in the standard energy space $H(\operatorname{curl}; \Omega)$, the tangential trace resides in a space of negative order \cite{BuffaCostabelSheen2002}. Such distributional regularity is insufficient to characterize a pointwise nonlinear response, necessitating the development of a trace-regular Maxwell framework wherein the tangential traces possess $L^2$-regularity.

This paper aims to establish a trace-regular variational setting for Maxwell scattering problems featuring pointwise nonlinear boundary and interface responses, and to rigorously analyze their sensitivity with respect to domain perturbations. While the three nonlinear models considered herein impedance, transmission, and perfect conductor are unified under the same trace-regularity principle, their variational realizations diverge according to the mathematical structure of the boundary laws. Specifically, the nonlinear impedance and transmission problems are addressed through primal curl--curl formulations, whereas the nonlinear perfect conductor case, being an essential-type condition, is incorporated via a mixed dual formulation. Building upon these fixed-domain formulations, we utilize a unified perturbation framework based on the covariant Piola transform to derive the continuity, material differentiability, and Eulerian shape differentiability of the electromagnetic solutions.

The primary contributions of this work are summarized as follows:

\begin{itemize}
    \item \textbf{Trace-regular nonlinear Maxwell formulations.} We establish a rigorous functional analytic framework for nonlinear Maxwell scattering problems by employing specialized subspaces where the tangential electric field admits an $L^2(\Gamma)$-trace. This regularity is essential for the pointwise characterization of nonlinear boundary and interface responses. Our framework unifies three canonical models: nonlinear impedance, nonlinear transmission, and nonlinear perfect electric conductors. While the first two models are addressed via primal curl--curl variational formulations, the latter is treated through a mixed dual formulation to accommodate its essential-type boundary condition. Existence and uniqueness of solutions are established via fixed-point arguments, leveraging the well-posedness of the associated linear Maxwell operators under suitable smallness assumptions on the nonlinearities.

    \item \textbf{Unified domain-dependence analysis.} We provide a comprehensive analysis of the solution's dependence on domain variations. By utilizing the curl-preserving covariant Piola transformation, we map the perturbed problems back to a fixed reference configuration. This leads to a unified perturbative structure for the volume integrals, the transported tangential traces, and the nonlinear boundary terms. Within this setting, we prove the continuity of the pulled-back solutions with respect to admissible domain perturbations in the $C^1$-topology.

    \item \textbf{Characterization of material and Eulerian shape derivatives.} We establish the Fr\'echet differentiability of the electromagnetic fields with respect to shape variations. The material derivatives are characterized as unique solutions to corresponding $\mathbb{R}$-linearized Maxwell systems. By appropriately accounting for the convective and geometric contributions of the domain transformation, we derive the Eulerian shape derivatives and provide explicit linearized boundary or interface conditions for each nonlinear model. Finally, we rigorously verify that the resulting sensitivity expressions satisfy the Hadamard structure, confirming that the shape derivative depends exclusively on the normal component of the boundary deformation.
\end{itemize}

The resulting linearized systems provide a rigorous mathematical foundation for future sensitivity analysis and shape optimization, although adjoint formulations and numerical reconstruction algorithms are beyond the scope of the present paper.

The remainder of this paper is organized as follows. In Section~\ref{sec:2}, we introduce the nonlinear Maxwell models and their trace-regular variational formulations, including the mixed formulation for the nonlinear perfect conductor case, and establish the corresponding well-posedness results. Section~\ref{sec:3} investigates the domain dependence of the pulled-back solutions and derives the material derivatives using the covariant Piola transformation. In Section~\ref{sec:4}, we transition from material derivatives to Eulerian shape derivatives, deriving the explicit boundary and interface conditions satisfied by the linearized fields and verifying their Hadamard structure.

\section{The direct scattering problem} \label{sec:2}

Let \(D\subset\mathbb R^3\) denote a bounded domain with a Lipschitz boundary
\(\Gamma:=\partial D\). Throughout the paper, \(\mathbf n\) denotes the unit
normal on \(\Gamma\) pointing outward from \(D\) into the exterior medium. We assume that $D$ is composed of an isotropic and homogeneous material, situated within an ambient vacuum characterized by electric permittivity $\varepsilon_0 > 0$, magnetic permeability $\mu_0 > 0$, and vanishing conductivity. The constitutive properties of the obstacle are specified by its magnetic permeability $\mu_D > 0$, permittivity $\varepsilon_D > 0$, and conductivity $\sigma_D \geq 0$. Throughout this work, we adopt the time-harmonic convention with time dependence of the form $e^{-\mathrm{i}\omega t}$, where $\omega > 0$ denotes the angular frequency.

The material contrast across the interface $\Gamma$ is defined by the relative permeability $\mu_r$ and the complex-valued relative permittivity $\varepsilon_r$. These constitutive parameters are given by the piecewise constant functions
\[
    \mu_r(\boldsymbol{x}) =
    \begin{cases}
        \mu_D/\mu_0, & \boldsymbol{x} \in D, \\
        1, & \boldsymbol{x} \in \mathbb{R}^3 \setminus \overline{D},
    \end{cases}
    \qquad
    \varepsilon_r(\boldsymbol{x}) =
    \begin{cases}
        \dfrac{1}{\varepsilon_0}
        \left( \varepsilon_D + \dfrac{\mathrm{i}\sigma_D}{\omega} \right),
        & \boldsymbol{x} \in D, \\
        1, & \boldsymbol{x} \in \mathbb{R}^3 \setminus \overline{D}.
    \end{cases}
\]
This general formulation encompasses both purely dielectric ($\sigma_D = 0$) and lossy conductive ($\sigma_D > 0$) penetrable obstacles. 

Let $k = \omega\sqrt{\varepsilon_0\mu_0} > 0$ denote the wavenumber in the ambient vacuum. The obstacle is excited by an incident field $\mathbf{E}^i$ in the form of a plane wave
\[
\mathbf{E}^i(\boldsymbol{x}) = \boldsymbol{p} \, e^{\mathrm{i}k\boldsymbol{d}\cdot\boldsymbol{x}},
\]
where the propagation direction $\boldsymbol{d} \in \mathbb{S}^2$ and the polarization vector $\boldsymbol{p} \in \mathbb{C}^3$ satisfy the transversality condition $\boldsymbol{p} \cdot \boldsymbol{d} = 0$.
The interaction of the incident field $\mathbf{E}^i$ with the obstacle generates a scattered electric field $\mathbf{E}^s$, such that the total electric field is given by $\mathbf{E} := \mathbf{E}^i + \mathbf{E}^s$. For a penetrable obstacle, the total field satisfies the second-order time-harmonic Maxwell system
\begin{equation}\label{eq:general_transmission}
\nabla \times
\left(
\frac{1}{\mu_r}\nabla \times \mathbf{E}
\right)
-
k^2\varepsilon_r\mathbf{E}
=
0
\qquad
\text{in } \mathbb{R}^3 \setminus \Gamma.
\end{equation}

In the case of impenetrable obstacles---exemplified by those subject to perfectly conducting or impedance-type boundary conditions---the electromagnetic field is confined to the exterior region $\mathbb{R}^3 \setminus \overline{D}$. Under this configuration, the interior $D$ does not support wave propagation. Furthermore, the exterior medium is identified with the vacuum background, yielding the constitutive relations $\mu_r \equiv \varepsilon_r \equiv 1$ in $\mathbb{R}^3 \setminus \overline{D}$. Consequently, the governing equation \eqref{eq:general_transmission} reduces to the homogeneous vector wave equation
\begin{equation}\label{eq:exterior_equation}
\nabla \times (\nabla \times \mathbf{E}) - k^2 \mathbf{E} = 0 \qquad \text{in } \mathbb{R}^3 \setminus \overline{D}.
\end{equation}

We investigate three canonical classes of nonlinear boundary and transmission conditions on the interface $\Gamma$.

\begin{itemize}
    \item \textbf{Nonlinear Impedance Boundary Condition (NIBC).} 
    The electric field on $\Gamma$ is governed by the relation
    \begin{equation}\label{eq:NIBC}
        \mathbf{n} \times (\nabla \times \mathbf{E}) + \mathrm{i}k\lambda\mathbf{E}_T = \mathbf{g}(\boldsymbol{x}, \mathbf{E}_T),
    \end{equation}
    where $\lambda \in \mathbb{C}$ is a constant impedance parameter with $\operatorname{Re}\lambda > 0$. The positivity of the real part of $\lambda$ accounts for physical energy dissipation at the boundary and ensures the well-posedness of the associated linear problem. Here, $\mathbf{E}_T:= \mathbf{n} \times (\mathbf{E} \times \mathbf{n})$ denotes the tangential component of the electric field, and $\mathbf{g} \colon \Gamma \times \mathbb{C}^3 \to \mathbb{C}^3$ represents a nonlinear response acting pointwise on $\mathbf{E}_T$.

    \item \textbf{Nonlinear Perfect Electric Conductor Condition (NPEC).} 
    In this configuration, the boundary condition is prescribed as
    \begin{equation}\label{eq:npc}
        \mathbf{n} \times \mathbf{E} = \mathbf{g}(\boldsymbol{x}, \mathbf{E}_T) \qquad \text{on } \Gamma.
    \end{equation}
    In the limit where $\mathbf{g} \equiv 0$, the expression \eqref{eq:npc} reduces to the classical perfect electric conductor (PEC) condition, $\mathbf{n} \times \mathbf{E} = 0$.

    \item \textbf{Nonlinear Transmission Condition (NTC).} 
    For penetrable obstacles, the electromagnetic field satisfies the following transmission system across the interface:
    \begin{equation}\label{eq:transmission}
    \begin{aligned}
        \nabla \times \left( \frac{1}{\mu_r} \nabla \times \mathbf{E} \right) - k^2 \varepsilon_r \mathbf{E} &= 0 && \text{in } D \cup (\mathbb{R}^3 \setminus \overline{D}), \\
        [\mathbf{n} \times \mathbf{E}] &= 0 && \text{on } \Gamma, \\
        \left[ \mathbf{n} \times \left( \frac{1}{\mu_r} \nabla \times \mathbf{E} \right) \right] &= \mathbf{g}(\boldsymbol{x}, \mathbf{E}_T) && \text{on } \Gamma.
    \end{aligned}
    \end{equation}
    Here, the operator $[\cdot]$ denotes the jump across $\Gamma$, defined by $[f]:= f^+ - f^-$, where $f^+$ and $f^-$ designate the traces from the exterior $\mathbb{R}^3 \setminus \overline{D}$ and the interior $D$, respectively. Under the continuity of the tangential trace $[\mathbf{n} \times \mathbf{E}] = 0$, the tangential component $\mathbf{E}_T$ is uniquely well-defined on $\Gamma$.

    From a physical perspective, this formulation arises when the jump in the tangential magnetic field $\mathbf{H} = (\mathrm{i}k\mu_r)^{-1} \nabla \times \mathbf{E}$ is prescribed via a nonlinear constitutive relation of the form $[\mathbf{n} \times \mathbf{H}] = \boldsymbol{\psi}(\boldsymbol{x}, \mathbf{E}_T)$. The resulting interface term is then identified as $\mathbf{g}(\boldsymbol{x}, \mathbf{E}_T):= \mathrm{i}k \boldsymbol{\psi}(\boldsymbol{x}, \mathbf{E}_T)$.
\end{itemize}

To facilitate the variational analysis, we truncate the unbounded exterior region by enclosing the obstacle $D$ within an open ball $B_R$ of sufficiently large radius $R$, such that $\overline{D} \subset B_R$. Let $\Gamma_R:= \partial B_R$ denote the artificial outer boundary. The computational domain $\Omega$ is defined according to the specific scattering configuration:
\[
\Omega:=
\begin{cases}
B_R \setminus \overline{D} & \text{for impenetrable obstacles (NIBC and NPEC),} \\
B_R & \text{for penetrable obstacles (NTC).}
\end{cases}
\]
Accordingly, the boundary $\partial\Omega$ comprises the artificial boundary $\Gamma_R$ and, in the impenetrable case, the physical interface $\Gamma$. For the penetrable model, $\Gamma$ constitutes an interior interface within the domain $\Omega = B_R$.

The total field $\mathbf{E}$ is sought in the Hilbert space
\[
H(\operatorname{curl}, \Omega):= \left\{ \mathbf{u} \in L^2(\Omega)^3 : \nabla \times \mathbf{u} \in L^2(\Omega)^3 \right\}.
\]
In the penetrable setting, the first transmission condition---prescribing the continuity of the tangential trace $[\mathbf{n} \times \mathbf{E}] = 0$---is implicitly satisfied by the requirement $\mathbf{E} \in H(\operatorname{curl}, B_R)$. The second transmission condition, involving the jump in the magnetic trace, is subsequently realized as a natural condition within the variational formulation.

The Silver--M\"uller radiation condition is prescribed on the artificial boundary $\Gamma_R$ through the electromagnetic Calder\'on operator, which yields an exact transparent boundary condition for the scattered field. For the boundary manifolds considered herein, we define the tangential trace operator $\gamma_t$ and the tangential component operator $\gamma_T$ by
\[
\gamma_t \mathbf{u} := \mathbf{n} \times \mathbf{u}, \qquad \gamma_T \mathbf{u} := \mathbf{n} \times (\mathbf{u} \times \mathbf{n}),
\]
where the unit normal $\mathbf{n}$ is consistent with the orientations previously established for the respective boundary components.

On a Lipschitz boundary, the Maxwell tangential trace mappings satisfy the well-established continuity properties
\[
\gamma_t \colon H(\operatorname{curl}; \Omega) \to H^{-1/2}(\operatorname{div}_{\partial\Omega}, \partial\Omega), \qquad \gamma_T \colon H(\operatorname{curl}; \Omega) \to H^{-1/2}(\operatorname{curl}_{\partial\Omega}, \partial\Omega)
\]
(cf. \cite{BuffaCostabelSheen2002}). We denote the surface divergence and surface curl operators on the respective boundary components by $\operatorname{div}_S$ and $\operatorname{curl}_S$ for $S \in \{\Gamma, \Gamma_R\}$. These trace spaces are of negative fractional order, reflecting the fact that the tangential trace of an arbitrary field in the energy space $H(\operatorname{curl}; \Omega)$ generally lacks $L^2(\partial\Omega)^3$ regularity. Since the nonlinear boundary responses considered in this work are defined pointwise with respect to the tangential electric field, their rigorous characterization necessitates the use of trace-regular subspaces where the tangential traces possess $L^2$-representatives on the interface $\Gamma$.

The electromagnetic Calder\'on operator (cf. \cite{KirschMonk1995, Monk2003}) is realized on the artificial boundary $\Gamma_R$ as a mapping 
\[
\Lambda \colon H^{-1/2}(\operatorname{div}_{\Gamma_R}, \Gamma_R) \to H^{-1/2}(\operatorname{div}_{\Gamma_R}, \Gamma_R).
\]
For a given tangential field $\boldsymbol{\xi} \in H^{-1/2}(\operatorname{div}_{\Gamma_R}, \Gamma_R)$, let $\mathbf{W} \in H_{\mathrm{loc}}(\operatorname{curl}, \mathbb{R}^3 \setminus \overline{B_R})$ be the unique radiating solution to the exterior boundary value problem
\[
\begin{cases}
\nabla \times \nabla \times \mathbf{W} - k^2 \mathbf{W} = 0 & \text{in } \mathbb{R}^3 \setminus \overline{B_R}, \\
\gamma_t \mathbf{W} = \boldsymbol{\xi} & \text{on } \Gamma_R.
\end{cases}
\]
The operator $\Lambda$ is characterized by the relation $\Lambda \boldsymbol{\xi} := \gamma_t((\mathrm{i}k)^{-1} \nabla \times \mathbf{W})$, which yields the non-local identity $\gamma_t(\nabla \times \mathbf{W}) = \mathrm{i}k \Lambda(\gamma_t \mathbf{W})$ on $\Gamma_R$. When applied to the scattered field $\mathbf{E}^s$, this provides the exact transparent boundary condition
\begin{equation}\label{eq:calderon_condition1}
\gamma_t(\nabla \times \mathbf{E}^s) = \mathrm{i}k \Lambda(\gamma_t \mathbf{E}^s) \qquad \text{on } \Gamma_R.
\end{equation}
In what follows, boundary integrals on $\Gamma_R$ involving the Calder\'on operator are to be understood as duality pairings between the Maxwell trace spaces $H^{-1/2}(\operatorname{div}_{\Gamma_R}, \Gamma_R)$ and $H^{-1/2}(\operatorname{curl}_{\Gamma_R}, \Gamma_R)$. Specifically, we adopt the formal integral notation
\[
\int_{\Gamma_R} (\Lambda \gamma_t \mathbf{U}) \cdot \gamma_T \overline{\mathbf{V}} \, \mathrm{d}s := \left\langle \Lambda \gamma_t \mathbf{U}, \gamma_T \overline{\mathbf{V}} \right\rangle_{\Gamma_R}
\]
primarily for notational convenience; the analytical interpretation remains strictly that of the underlying duality pairing between the trace spaces.

For the well-posedness analysis, we conduct our study within a trace-regular Maxwell space. Specifically, we define the Hilbert space
\[
\mathbf{X}(\Omega) := \left\{ \mathbf{u} \in H(\operatorname{curl}; \Omega) : \gamma_T \mathbf{u} \in L^2(\Gamma)^3 \right\},
\]
where $H(\operatorname{curl}; \Omega) := \{ \mathbf{u} \in L^2(\Omega)^3 : \nabla \times \mathbf{u} \in L^2(\Omega)^3 \}$. The space $\mathbf{X}(\Omega)$ is endowed with the natural augmented norm
\[
\|\mathbf{u}\|_{\mathbf{X}(\Omega)}^2 := \|\mathbf{u}\|_{H(\operatorname{curl}; \Omega)}^2 + \|\gamma_T \mathbf{u}\|_{L^2(\Gamma)}^2.
\]
Since the tangential component $\gamma_T \mathbf{u} = \mathbf{n} \times (\mathbf{u} \times \mathbf{n})$ is inherently tangential, the requirement $\gamma_T \mathbf{u} \in L^2(\Gamma)^3$ is equivalent to seeking traces in the subspace 
\[
L_t^2(\Gamma) := \left\{ \boldsymbol{\zeta} \in L^2(\Gamma)^3 : \boldsymbol{\zeta} \cdot \mathbf{n} = 0 \text{ a.e. on } \Gamma \right\}.
\]

This enhanced $L^2$-regularity is necessitated by the pointwise definition of the nonlinear boundary responses with respect to $\gamma_T \mathbf{E}$. Consequently, the nonlinear boundary integrals on $\Gamma$ are rigorously interpreted as $L^2(\Gamma)^3$ inner products or, more generally, as $L^2$-trace duality pairings, ensuring the variational formulation is well-defined.

The nonlinear response map
\[
\mathbf{g} \colon \Gamma \times \mathbb{C}^3 \to \mathbb{C}^3
\]
is assumed to be a Carathe\'odory function; that is, $\mathbf{g}(\cdot, \boldsymbol{z})$ is measurable for every $\boldsymbol{z} \in \mathbb{C}^3$, and $\mathbf{g}(\boldsymbol{x}, \cdot)$ is continuous for almost every $\boldsymbol{x} \in \Gamma$. Since the boundary operators in the nonlinear impedance and transmission conditions map into tangential fields, the nonlinear response is interpreted as a tangential mapping. Accordingly, we impose the tangency constraint
\[
\mathbf{g}(\boldsymbol{x}, \boldsymbol{z}) \cdot \mathbf{n}(\boldsymbol{x}) = 0
\]
for all tangential arguments $\boldsymbol{z}$. For any $\boldsymbol{\zeta} \in L_t^2(\Gamma)$, we consider the induced Nemytskii operator
\[
\mathcal{G}(\boldsymbol{\zeta})(\boldsymbol{x}) := \mathbf{g}(\boldsymbol{x}, \boldsymbol{\zeta}(\boldsymbol{x})).
\]
We assume that $\mathcal{G}$ maps $L_t^2(\Gamma)$ into itself and is Lipschitz continuous. Specifically, we assume there exists a constant $L_g > 0$ such that
\begin{equation}
\label{eq:g_lipschitz_L2}
\left\| \mathcal{G}(\boldsymbol{\zeta}_1) - \mathcal{G}(\boldsymbol{\zeta}_2) \right\|_{L^2(\Gamma)} \le L_g \|\boldsymbol{\zeta}_1 - \boldsymbol{\zeta}_2\|_{L^2(\Gamma)}
\end{equation}
for all $\boldsymbol{\zeta}_1, \boldsymbol{\zeta}_2 \in L_t^2(\Gamma)$. These assumptions guarantee that the pointwise nonlinear boundary response is well-defined and amenable to analysis within the trace-regular variational framework.

We begin by establishing the well-posedness of the variational formulation for the nonlinear impedance boundary condition. The nonlinear PEC case, which necessitates an auxiliary trace compatibility constraint, is addressed in a subsequent section via a boundary lifting procedure. The nonlinear transmission problem is investigated in the sequel within the context of a coefficient-dependent trace-regular space.

\subsection{The nonlinear impedance boundary condition}

The variational characterization of the nonlinear impedance boundary value problem \eqref{eq:exterior_equation}--\eqref{eq:NIBC} entails finding a field $\mathbf{E} \in \mathbf{X}(\Omega)$ such that
\begin{equation}
\label{eq:main_formulation}
\mathcal{A}(\mathbf{E}; \mathbf{V}) = \langle \mathbf{F}, \mathbf{V} \rangle, \qquad \forall \, \mathbf{V} \in \mathbf{X}(\Omega).
\end{equation}
The semi-linear form $\mathcal{A} \colon \mathbf{X}(\Omega) \times \mathbf{X}(\Omega) \to \mathbb{C}$, which is linear with respect to its second argument, is prescribed by
\begin{equation}
\label{sef}
\begin{aligned}
\mathcal{A}(\mathbf{E}; \mathbf{V}) 
&:= \int_\Omega \left[ (\nabla \times \mathbf{E}) \cdot (\nabla \times \overline{\mathbf{V}}) - k^2 \mathbf{E} \cdot \overline{\mathbf{V}} \right] \, \mathrm{d}\boldsymbol {x} \\
&\quad + \mathrm{i}k\lambda \int_\Gamma (\gamma_T \mathbf{E}) \cdot (\gamma_T \overline{\mathbf{V}}) \, \mathrm{d}s - \int_\Gamma \mathbf{g}(\cdot, \gamma_T \mathbf{E}) \cdot (\gamma_T \overline{\mathbf{V}}) \, \mathrm{d}s \\
&\quad + \mathrm{i}k \left\langle \Lambda \gamma_t \mathbf{E}, \gamma_T \overline{\mathbf{V}} \right\rangle_{\Gamma_R}.
\end{aligned}
\end{equation}
The linear excitation functional $\mathbf{F} \in \mathbf{X}(\Omega)^*$, which incorporates the contribution of the incident field at the artificial boundary $\Gamma_R$, is given by
\begin{equation}
\label{eq:F_definition}
\langle \mathbf{F}, \mathbf{V} \rangle := \left\langle \mathrm{i}k \Lambda(\gamma_t \mathbf{E}^i) - \gamma_t (\nabla \times \mathbf{E}^i), \gamma_T \overline{\mathbf{V}} \right\rangle_{\Gamma_R}.
\end{equation}

Let $\mathscr {A}_l \in \mathcal{L}(\mathbf{X}(\Omega), \mathbf{X}(\Omega)^*)$ denote the bounded linear operator associated with the sesquilinear part of the impedance problem, defined by
\begin{equation}
\label{eq:Al_definition}
\begin{aligned}
\langle \mathscr {A}_l \mathbf{E}, \mathbf{V} \rangle 
&:= \int_\Omega \left[ (\nabla \times \mathbf{E}) \cdot (\nabla \times \overline{\mathbf{V}}) - k^2 \mathbf{E} \cdot \overline{\mathbf{V}} \right] \, \mathrm{d}\boldsymbol {x} \\
&\quad + \mathrm{i} k \lambda \int_\Gamma (\gamma_T \mathbf{E}) \cdot (\gamma_T \overline{\mathbf{V}}) \, \mathrm{d}s + \mathrm{i} k \left\langle \Lambda \gamma_t \mathbf{E}, \gamma_T \overline{\mathbf{V}} \right\rangle_{\Gamma_R}.
\end{aligned}
\end{equation}
By appealing to the well-established weak well-posedness theory for linear Maxwell impedance problems in trace-regular spaces (see, e.g., \cite[Theorem 4.17]{Monk2003}), it follows that for any prescribed tangential boundary datum $\mathbf{f} \in L_t^2(\Gamma)$, the linear variational problem
\begin{equation}
\label{eq:linear_impedance_data_problem}
\langle \mathscr {A}_l \mathbf{E}, \mathbf{V} \rangle = \langle \mathbf{F}, \mathbf{V} \rangle + \int_\Gamma \mathbf{f} \cdot \gamma_T \overline{\mathbf{V}} \, \mathrm{d}s, \qquad \forall \, \mathbf{V} \in \mathbf{X}(\Omega)
\end{equation}
admits a unique weak solution $\mathbf{E} \in \mathbf{X}(\Omega)$. Furthermore, the solution satisfies a stability estimate with respect to the boundary data: if $\mathbf{E}_1, \mathbf{E}_2 \in \mathbf{X}(\Omega)$ denote the solutions corresponding to data $\mathbf{f}_1, \mathbf{f}_2 \in L_t^2(\Gamma)$, respectively, then there exists a stability constant $C_{\rm imp} > 0$ such that
\begin{equation}
\label{eq:linear_stability}
\|\mathbf{E}_1 - \mathbf{E}_2\|_{\mathbf{X}(\Omega)} \le C_{\rm imp} \|\mathbf{f}_1 - \mathbf{f}_2\|_{L^2(\Gamma)}.
\end{equation}

\begin{theo}
\label{th:existence_nibc}
Suppose that the nonlinear response $\mathbf{g}$ satisfies the Lipschitz condition \eqref{eq:g_lipschitz_L2}. Let $C_{\rm imp}$ denote the stability constant of the associated linear impedance problem. If 
\[
C_{\rm imp} L_g < 1,
\]
then the nonlinear impedance problem \eqref{eq:main_formulation} possesses a unique weak solution $\mathbf{E} \in \mathbf{X}(\Omega)$.
\end{theo}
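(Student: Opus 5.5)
We recast the nonlinear problem as a fixed-point equation for the linear solution operator and apply the Banach contraction principle. For each $\mathbf{f} \in L_t^2(\Gamma)$, let $\mathcal{S}(\mathbf{f}) \in \mathbf{X}(\Omega)$ be the unique solution of the linear problem \eqref{eq:linear_impedance_data_problem}. Comparing \eqref{sef} with \eqref{eq:Al_definition}, we see that $\mathcal{A}(\mathbf{E};\mathbf{V}) = \langle \mathscr{A}_l \mathbf{E}, \mathbf{V}\rangle - \int_\Gamma \mathcal{G}(\gamma_T\mathbf{E})\cdot\gamma_T\overline{\mathbf{V}}\,\mathrm{d}s$. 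Hence $\mathbf{E}\in\mathbf{X}(\Omega)$ solves \eqref{eq:main_formulation} if and only if
\[
\mathbf{E} = \mathcal{T}(\mathbf{E}) := \mathcal{S}\bigl(\mathcal{G}(\gamma_T \mathbf{E})\bigr).
\]
To make sense of this, $\mathcal{G}(\gamma_T\mathbf{E})$ must be admissible data. This holds because $\gamma_T\mathbf{E}\in L_t^2(\Gamma)$ for every $\mathbf{E}\in\mathbf{X}(\Omega)$, by the definition of $\mathbf{X}(\Omega)$ and the tangentiality of $\gamma_T$, and because $\mathcal{G}$ maps $L_t^2(\Gamma)$ into itself by assumption. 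So $\mathcal{T}\colon\mathbf{X}(\Omega)\to\mathbf{X}(\Omega)$ is well defined. Both implications of the equivalence are immediate from uniqueness of the linear solution.

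Next we show that $\mathcal{T}$ is a contraction. Take $\mathbf{E}_1,\mathbf{E}_2\in\mathbf{X}(\Omega)$. Applying \eqref{eq:linear_stability} with $\mathbf{f}_j=\mathcal{G}(\gamma_T\mathbf{E}_j)$ and then \eqref{eq:g_lipschitz_L2} gives
\[
\|\mathcal{T}\mathbf{E}_1-\mathcal{T}\mathbf{E}_2\|_{\mathbf{X}(\Omega)} \le C_{\rm imp}\|\mathcal{G}(\gamma_T\mathbf{E}_1)-\mathcal{G}(\gamma_T\mathbf{E}_2)\|_{L^2(\Gamma)} \le C_{\rm imp}L_g\|\gamma_T(\mathbf{E}_1-\mathbf{E}_2)\|_{L^2(\Gamma)}.
\]
The last factor is bounded by $\|\mathbf{E}_1-\mathbf{E}_2\|_{\mathbf{X}(\Omega)}$, because the trace norm is part of the augmented norm on $\mathbf{X}(\Omega)$. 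This is precisely why the trace-regular space is used: the trace bound holds with constant exactly one. Therefore $\mathcal{T}$ is a strict contraction with constant $C_{\rm imp}L_g<1$.

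Since $\mathbf{X}(\Omega)$ is a Hilbert space, and hence complete, the Banach fixed-point theorem gives a unique fixed point. By the equivalence above, this fixed point is the unique weak solution of \eqref{eq:main_formulation}.

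The only real obstacle is making sure the linear theory is used consistently. The stability estimate \eqref{eq:linear_stability} must be measured in the full $\mathbf{X}(\Omega)$ norm, including the $L^2(\Gamma)$ trace term, with the same constant $C_{\rm imp}$; this is exactly how the paper states it, so we take it as given. We must also check that the incident-field functional $\mathbf{F}$ is the same on both sides, so that it cancels in the difference estimate. It does, since the difference $\mathcal{S}(\mathbf{f}_1)-\mathcal{S}(\mathbf{f}_2)$ solves the linear problem with $\mathbf{F}=0$ and data $\mathbf{f}_1-\mathbf{f}_2$.
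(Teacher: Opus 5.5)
Your proposal is correct and follows essentially the same route as the paper. The paper defines $\mathcal{T}\mathbf{U}$ as the solution of the linear impedance problem with datum $\mathbf{g}(\cdot,\gamma_T\mathbf{U})$, obtains the contraction from \eqref{eq:linear_stability} and \eqref{eq:g_lipschitz_L2} with the trace term absorbed into the augmented $\mathbf{X}(\Omega)$-norm, and concludes by the Banach fixed-point theorem. Your remark that every weak solution is a fixed point, by uniqueness of the linear problem, makes uniqueness among all weak solutions slightly more explicit than in the paper, which only checks that the fixed point solves \eqref{eq:main_formulation}.
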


\begin{proof}
For any $\mathbf{U} \in \mathbf{X}(\Omega)$, we define the solution operator $\mathcal{T} \colon \mathbf{X}(\Omega) \to \mathbf{X}(\Omega)$ such that $\mathcal{T}\mathbf{U}$ is the unique solution to the linear variational problem
\begin{equation}\label{eq:fixed_point_op}
\langle \mathscr {A}_l (\mathcal{T}\mathbf{U}), \mathbf{V} \rangle = \langle \mathbf{F}, \mathbf{V} \rangle + \int_\Gamma \mathbf{g}(\cdot, \gamma_T \mathbf{U}) \cdot \gamma_T \overline{\mathbf{V}} \, \mathrm{d}s, \qquad \forall \, \mathbf{V} \in \mathbf{X}(\Omega).
\end{equation}
The mapping $\mathcal{T}$ is well-defined since $\gamma_T \mathbf{U} \in L_t^2(\Gamma)$, and by the properties of $\mathbf{g}$, the term $\mathbf{g}(\cdot, \gamma_T \mathbf{U})$ constitutes an admissible tangential boundary datum in $L_t^2(\Gamma)$.

Let $\mathbf{U}_1, \mathbf{U}_2 \in \mathbf{X}(\Omega)$ be arbitrary, and let $\mathbf{E}_j := \mathcal{T}\mathbf{U}_j$ for $j=1,2$. Applying the linear stability estimate \eqref{eq:linear_stability} with the boundary data $\mathbf{g}(\cdot, \gamma_T \mathbf{U}_1)$ and $\mathbf{g}(\cdot, \gamma_T \mathbf{U}_2)$, we obtain
\[
\|\mathbf{E}_1 - \mathbf{E}_2\|_{\mathbf{X}(\Omega)} \le C_{\rm imp} \left\| \mathbf{g}(\cdot, \gamma_T \mathbf{U}_1) - \mathbf{g}(\cdot, \gamma_T \mathbf{U}_2) \right\|_{L^2(\Gamma)}.
\]
Invoking the Lipschitz continuity of the nonlinear response \eqref{eq:g_lipschitz_L2}, it follows that
\[
\begin{aligned}
\|\mathcal{T}\mathbf{U}_1 - \mathcal{T}\mathbf{U}_2\|_{\mathbf{X}(\Omega)} &\le C_{\rm imp} L_g \|\gamma_T(\mathbf{U}_1 - \mathbf{U}_2)\|_{L^2(\Gamma)} \\
&\le C_{\rm imp} L_g \|\mathbf{U}_1 - \mathbf{U}_2\|_{\mathbf{X}(\Omega)}.
\end{aligned}
\]
Under the assumption $C_{\rm imp} L_g < 1$, the operator $\mathcal{T}$ is a contraction on the Hilbert space $\mathbf{X}(\Omega)$. By the Banach fixed-point theorem, there exists a unique fixed point $\mathbf{E} \in \mathbf{X}(\Omega)$ such that $\mathcal{T}\mathbf{E} = \mathbf{E}$.

Substituting the fixed point into \eqref{eq:fixed_point_op} yields
\[
\langle \mathscr {A}_l \mathbf{E}, \mathbf{V} \rangle = \langle \mathbf{F}, \mathbf{V} \rangle + \int_\Gamma \mathbf{g}(\cdot, \gamma_T \mathbf{E}) \cdot \gamma_T \overline{\mathbf{V}} \, \mathrm{d}s, \qquad \forall \, \mathbf{V} \in \mathbf{X}(\Omega),
\]
which is precisely the variational formulation \eqref{eq:main_formulation}. Consequently, the fixed point $\mathbf{E}$ constitutes the unique weak solution to the nonlinear impedance problem.
\end{proof}

\begin{rema}
The smallness condition $C_{\rm imp} L_g < 1$ constitutes a sufficient requirement for well-posedness within the established contraction mapping framework. While this condition restricts the admissible nonlinearities to a specific Lipschitz class and is not intended to be optimal, it provides a mathematically rigorous foundation for the sensitivity analysis developed in the sequel. We note that more general solvability results could potentially be obtained by invoking monotonicity or dissipativity arguments; however, such extensions are not required for the primary objectives of this study.

To demonstrate that the class of admissible nonlinearities is non-empty, consider the following example. Let $a \in L^\infty(\Gamma)$ and $\beta \in \mathbb{C}$, and define the tangential projection operator $\mathbf{P}_T(\boldsymbol{x}) := I - \mathbf{n}(\boldsymbol{x}) \otimes \mathbf{n}(\boldsymbol{x})$. We consider the nonlinear response map
\[
\mathbf{g}(\boldsymbol{x}, \boldsymbol{z}) = \beta a(\boldsymbol{x}) \frac{\mathbf{P}_T(\boldsymbol{x}) \boldsymbol{z}}{1 + |\mathbf{P}_T(\boldsymbol{x}) \boldsymbol{z}|^2}.
\]
It is evident that $\mathbf{g}$ is purely tangential and induces a Lipschitz-continuous Nemytskii operator from $L_t^2(\Gamma)$ into itself. Specifically, the map satisfies the estimate
\[
\left\| \mathbf{g}(\cdot, \boldsymbol{\zeta}_1) - \mathbf{g}(\cdot, \boldsymbol{\zeta}_2) \right\|_{L^2(\Gamma)} \le |\beta| \, \|a\|_{L^\infty(\Gamma)} \|\boldsymbol{\zeta}_1 - \boldsymbol{\zeta}_2\|_{L^2(\Gamma)}.
\]
Consequently, the smallness condition for well-posedness is satisfied provided that the parameters satisfy
\[
C_{\rm imp} |\beta| \, \|a\|_{L^\infty(\Gamma)} < 1.
\]
\end{rema}

\subsection{The nonlinear perfect electric conductor condition (NPEC)}

We next consider the nonlinear perfect electric conductor (NPEC) boundary
condition
\begin{equation}\label{eq:npec_boundary_condition}
\gamma_t\mathbf E
=
\mathbf g(\cdot,\gamma_T\mathbf E)
\quad\text{on }\Gamma .
\end{equation}
Unlike the nonlinear impedance condition, which enters the weak formulation
as a natural boundary condition, the NPEC condition is of essential type.
In order to avoid imposing this nonlinear essential boundary condition
directly on the trial and test spaces, we switch to a dual  variational
formulation. To derive this formulation, we introduce the auxiliary variable
\[
\mathbf Q:=\nabla\times\mathbf E .
\]
Then the time-harmonic Maxwell system can be rewritten as the first-order
system
\[
\mathbf Q-\nabla\times\mathbf E=0,
\qquad
\nabla\times\mathbf Q-k^2\mathbf E=0
\quad\text{in }\Omega .
\]

Accordingly, the dual  formulation is posed in the product space
\[
\mathbb X_p(\Omega)
:=
\mathbf X(\Omega)\times H(\operatorname{curl};\Omega).
\]
Here the electric field is required to belong to the trace-regular space
\(\mathbf X(\Omega)\), because the nonlinear response depends on
\(\gamma_T\mathbf E\), whereas the auxiliary variable \(\mathbf Q\) is only
taken in \(H(\operatorname{curl};\Omega)\). In particular, no \(L^2\)-tangential
trace regularity on \(\Gamma\) is imposed on \(\mathbf Q\). The term involving
\(\Lambda^{-1}\gamma_t\mathbf Q\) on \(\Gamma_R\) is interpreted in the usual
Maxwell trace duality sense.

The dual NPEC problem is to find
\[
(\mathbf E,\mathbf Q)\in\mathbb X_p(\Omega)
\]
such that
\begin{equation}\label{eq:mixed_formulation}
\mathcal A^{\rm p}
\bigl((\mathbf E,\mathbf Q),(\boldsymbol\tau,\boldsymbol\nu)\bigr)
=
\mathcal F^{\rm p}(\boldsymbol\tau,\boldsymbol\nu),
\qquad
\forall(\boldsymbol\tau,\boldsymbol\nu)\in\mathbb X_p(\Omega),
\end{equation}
where the semilinear mixed form is defined by
\begin{equation}\label{eq:mixed_form}
\begin{aligned}
\mathcal A^{\rm p}
\bigl((\mathbf E,\mathbf Q),(\boldsymbol\tau,\boldsymbol\nu)\bigr)
&:=
\int_\Omega
\mathbf Q\cdot\overline{\boldsymbol\tau}\,dx
-
\int_\Omega
\mathbf E\cdot(\nabla\times\overline{\boldsymbol\tau})\,dx\\
&\quad
+
\frac{i}{k}
\left\langle
\Lambda^{-1}\gamma_t\mathbf Q,
\gamma_T\overline{\boldsymbol\tau}
\right\rangle_{\Gamma_R}
+
\int_\Gamma
\mathbf g(\cdot,\gamma_T\mathbf E)
\cdot
\gamma_T\overline{\boldsymbol\tau}\,ds\\
&\quad+
\int_\Omega
(\nabla\times\mathbf Q)\cdot\overline{\boldsymbol\nu}\,dx-
k^2
\int_\Omega
\mathbf E\cdot\overline{\boldsymbol\nu}\,dx .
\end{aligned}
\end{equation}
The right-hand side functional is given by
\begin{equation}\label{eq:mixed_rhs}
\mathcal F^{\rm p}(\boldsymbol\tau,\boldsymbol\nu)
:=
\left\langle
\gamma_t\mathbf E^i
+
\frac{i}{k}\Lambda^{-1}\gamma_t(\nabla\times\mathbf E^i),
\gamma_T\overline{\boldsymbol\tau}
\right\rangle_{\Gamma_R}.
\end{equation}
The mixed formulation introduced above serves as the foundation for the subsequent domain perturbation analysis. However, for established well-posedness results regarding the direct NPEC problem, it is more advantageous to employ an equivalent lifting characterization for the electric field alone. Once the electric field $\mathbf{E}$ is determined, the auxiliary magnetic variable can be recovered via $\mathbf{Q} = \nabla \times \mathbf{E}$, provided the solution satisfies the enhanced regularity condition $\nabla \times \mathbf{E} \in H(\operatorname{curl};\Omega)$.

To this end, we define the trace space of regular tangential fields as the intersection
\[
Y_t^{\mathrm{reg}}(\Gamma) := H^{-1/2}(\operatorname{div}_\Gamma, \Gamma) \cap L_t^2(\Gamma),
\]
endowed with the natural graph norm
\[
\|\boldsymbol{\eta}\|_{Y_t^{\mathrm{reg}}(\Gamma)}^2 := \|\boldsymbol{\eta}\|_{H^{-1/2}(\operatorname{div}_\Gamma, \Gamma)}^2 + \|\boldsymbol{\eta}\|_{L^2(\Gamma)}^2.
\]
When restricted to $Y_t^{\mathrm{reg}}(\Gamma)$, the existence of a bounded right inverse for the Maxwell tangential trace operator (cf. \cite{BuffaCostabelSheen2002, Monk2003}) ensures the existence of a bounded lifting operator
\[
\mathcal{R}_\Gamma \colon Y_t^{\mathrm{reg}}(\Gamma) \to \mathbf{X}(\Omega), \qquad \gamma_t(\mathcal{R}_\Gamma \boldsymbol{\eta}) = \boldsymbol{\eta} \quad \text{on } \Gamma.
\]
The validity of this construction follows from the fundamental identity $\gamma_T \mathbf{u} = -\mathbf{n} \times \gamma_t \mathbf{u}$; specifically, if $\gamma_t(\mathcal{R}_\Gamma \boldsymbol{\eta}) = \boldsymbol{\eta}$ with $\boldsymbol{\eta} \in L_t^2(\Gamma)$, then $\gamma_T(\mathcal{R}_\Gamma \boldsymbol{\eta}) = -\mathbf{n} \times \boldsymbol{\eta} \in L_t^2(\Gamma)$, thereby satisfying the defining regularity of the space $\mathbf{X}(\Omega)$.

For the NPEC boundary condition, we postulate that the nonlinear response $\mathbf{g}$ maps tangential $L^2$-fields into the regular trace space; specifically, 
\[
\mathbf{g}(\cdot, \boldsymbol{\zeta}) \in Y_t^{\mathrm{reg}}(\Gamma) \qquad \text{for all } \boldsymbol{\zeta} \in L_t^2(\Gamma).
\]
Furthermore, we require that $\mathbf{g}$ satisfies a uniform Lipschitz estimate with respect to the trace regularity, namely,
\begin{equation}\label{eq:NPEC_lipschitz_trace}
\left\| \mathbf{g}(\cdot, \boldsymbol{\zeta}_1) - \mathbf{g}(\cdot, \boldsymbol{\zeta}_2) \right\|_{Y_t^{\mathrm{reg}}(\Gamma)} \le L_{\mathrm{pec}} \|\boldsymbol{\zeta}_1 - \boldsymbol{\zeta}_2\|_{L^2(\Gamma)}
\end{equation}
for a constant $L_{\mathrm{pec}} > 0$ and all $\boldsymbol{\zeta}_1, \boldsymbol{\zeta}_2 \in L_t^2(\Gamma)$. This trace compatibility requirement ensures that the pointwise nonlinear PEC response generates boundary data consistent with the range of the $\gamma_t$-trace operator, thereby providing the necessary regularity for the subsequent lifting argument.

Let $\mathbf{X}_0(\Omega)$ be the subspace of $\mathbf{X}(\Omega)$ defined by
\[
\mathbf{X}_0(\Omega) := \left\{ \mathbf{V} \in \mathbf{X}(\Omega) : \gamma_t \mathbf{V} = 0 \text{ on } \Gamma \right\}.
\]
We consider the sesquilinear form $a(\cdot, \cdot)$ associated with the linear Maxwell problem, given by
\[
\begin{aligned}
a(\mathbf{U}, \mathbf{V}) &:= \int_{\Omega} \left[ (\nabla \times \mathbf{U}) \cdot (\nabla \times \overline{\mathbf{V}}) - k^2 \mathbf{U} \cdot \overline{\mathbf{V}} \right] \mathrm{d}\boldsymbol{x} \\
&\quad + \mathrm{i}k \int_{\Gamma_R} (\Lambda \gamma_t \mathbf{U}) \cdot (\gamma_T \overline{\mathbf{V}}) \,\mathrm{d}s,
\end{aligned}
\]
where the boundary integral over $\Gamma_R$ is understood in the sense of the standard Maxwell trace duality. Based on the well-posedness theory for the truncated linear Maxwell problem with perfect electric conductor (PEC) boundary conditions, the operator
\[
A_0 : \mathbf{X}_0(\Omega) \to \mathbf{X}_0(\Omega)^*, \quad \langle A_0 \mathbf{U}, \mathbf{V} \rangle := a(\mathbf{U}, \mathbf{V}),
\]
is a bounded linear isomorphism. We denote the norm of its inverse by $C_0 := \|A_0^{-1}\|$.

\begin{theo}
\label{thm:nonlinear_pec}
Assume that the trace compatibility condition \eqref{eq:NPEC_lipschitz_trace} is satisfied. If
\[
(1+C_0 C_a) \|\mathcal{R}_\Gamma\| L_{\mathrm{pec}} < 1,
\]
then the nonlinear PEC problem admits a unique weak solution $\mathbf{E} \in \mathbf{X}(\Omega)$.
\end{theo}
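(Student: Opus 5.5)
The proof will be a lifting plus contraction argument, mirroring Theorem~\ref{th:existence_nibc}. Here $C_a$ denotes the continuity constant of the sesquilinear form $a(\cdot,\cdot)$ on $\mathbf X(\Omega)\times\mathbf X(\Omega)$, so that $|a(\mathbf U,\mathbf V)|\le C_a\|\mathbf U\|_{\mathbf X(\Omega)}\|\mathbf V\|_{\mathbf X(\Omega)}$. This holds by Cauchy--Schwarz and the boundedness of $\Lambda$ and of the Maxwell traces on $\Gamma_R$.

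The weak formulation of the NPEC problem will be stated as follows. Find $\mathbf E\in\mathbf X(\Omega)$ with $\gamma_t\mathbf E=\mathbf g(\cdot,\gamma_T\mathbf E)$ on $\Gamma$ such that
\[
a(\mathbf E,\mathbf V)=\langle\mathbf F,\mathbf V\rangle \qquad \text{for all } \mathbf V\in\mathbf X_0(\Omega).
\]
For a given $\mathbf U\in\mathbf X(\Omega)$ we define $\mathcal T\mathbf U:=\mathcal R_\Gamma\mathbf g(\cdot,\gamma_T\mathbf U)+\mathbf W_{\mathbf U}$. Here $\mathbf W_{\mathbf U}\in\mathbf X_0(\Omega)$ is the unique solution of
\[
a(\mathbf W_{\mathbf U},\mathbf V)=\langle\mathbf F,\mathbf V\rangle-a\bigl(\mathcal R_\Gamma\mathbf g(\cdot,\gamma_T\mathbf U),\mathbf V\bigr)\qquad \text{for all } \mathbf V\in\mathbf X_0(\Omega).
\]
The right-hand side lies in $\mathbf X_0(\Omega)^*$, so $\mathbf W_{\mathbf U}=A_0^{-1}(\cdots)$ is well defined. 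The lifting is admissible because $\gamma_T\mathbf U\in L^2_t(\Gamma)$ and hence $\mathbf g(\cdot,\gamma_T\mathbf U)\in Y_t^{\rm reg}(\Gamma)$ by assumption. Since $\gamma_t\mathbf W_{\mathbf U}=0$, we have $\gamma_t(\mathcal T\mathbf U)=\mathbf g(\cdot,\gamma_T\mathbf U)$ on $\Gamma$, and $\mathcal T\mathbf U$ satisfies the variational equation against $\mathbf X_0(\Omega)$. Consequently, fixed points of $\mathcal T$ are exactly weak solutions of the NPEC problem. Conversely, if $\mathbf E$ is any weak solution, then $\mathbf E-\mathcal R_\Gamma\mathbf g(\cdot,\gamma_T\mathbf E)\in\mathbf X_0(\Omega)$ solves the defining equation for $\mathbf W_{\mathbf E}$. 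By uniqueness of $A_0^{-1}$ it equals $\mathbf W_{\mathbf E}$, so $\mathbf E=\mathcal T\mathbf E$.

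For the contraction estimate, take $\mathbf U_1,\mathbf U_2$ and set $\boldsymbol\eta_j:=\mathbf g(\cdot,\gamma_T\mathbf U_j)$. The corrector difference satisfies
\[
\mathbf W_{\mathbf U_1}-\mathbf W_{\mathbf U_2}=-A_0^{-1}\,a\bigl(\mathcal R_\Gamma(\boldsymbol\eta_1-\boldsymbol\eta_2),\cdot\bigr)\big|_{\mathbf X_0},
\]
so its norm is at most $C_0C_a\|\mathcal R_\Gamma\|\,\|\boldsymbol\eta_1-\boldsymbol\eta_2\|_{Y_t^{\rm reg}}$. Adding the lifting difference and using \eqref{eq:NPEC_lipschitz_trace} together with $\|\gamma_T\mathbf U\|_{L^2(\Gamma)}\le\|\mathbf U\|_{\mathbf X(\Omega)}$ gives
\[
\|\mathcal T\mathbf U_1-\mathcal T\mathbf U_2\|_{\mathbf X(\Omega)}\le(1+C_0C_a)\|\mathcal R_\Gamma\|L_{\rm pec}\|\mathbf U_1-\mathbf U_2\|_{\mathbf X(\Omega)}.
\]
Under the hypothesis of the theorem, $\mathcal T$ is therefore a contraction on the Hilbert space $\mathbf X(\Omega)$. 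Banach's fixed-point theorem then yields existence, and uniqueness follows from the equivalence established in the previous paragraph.

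The main obstacle I expect is the bookkeeping of spaces rather than the estimate itself. First, $A_0$ is posed on $\mathbf X_0(\Omega)$, which carries the augmented norm; this norm is harmless there because $\gamma_T\mathbf V=-\mathbf n\times\gamma_t\mathbf V=0$ on $\Gamma$. Second, the functional $a(\mathcal R_\Gamma\boldsymbol\eta,\cdot)$ must be bounded on $\mathbf X_0(\Omega)$ with constant $C_a\|\mathcal R_\Gamma\boldsymbol\eta\|$. Third, one must check that the paper's notion of weak solution for the NPEC problem, the lifting characterization, coincides with the formulation used here. The mixed formulation \eqref{eq:mixed_formulation} is not needed for this theorem.
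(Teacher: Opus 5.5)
Your proposal is correct and follows essentially the same route as the paper. It uses the same lifting $\mathbf w_{\mathbf U}=\mathcal R_\Gamma\mathbf g(\cdot,\gamma_T\mathbf U)$, the same $\mathbf X_0(\Omega)$-corrector obtained via $A_0^{-1}$, the same contraction constant $(1+C_0C_a)\|\mathcal R_\Gamma\|L_{\mathrm{pec}}$, and the same converse argument showing that every weak solution is a fixed point. Your extra remark, that the augmented norm reduces to the $H(\operatorname{curl};\Omega)$-norm on $\mathbf X_0(\Omega)$, is a small clarification the paper leaves implicit.
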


\begin{proof}
For any $\mathbf{U} \in \mathbf{X}(\Omega)$, we define
\[
\mathbf{w}_{\mathbf{U}} := \mathcal{R}_\Gamma \mathbf{g}(\cdot, \gamma_T \mathbf{U}).
\]
This definition is justified by the fact that $\gamma_T \mathbf{U} \in L_t^2(\Gamma)$, which, combined with the trace compatibility assumption, ensures that $\mathbf{g}(\cdot, \gamma_T \mathbf{U}) \in Y_t^{\mathrm{reg}}(\Gamma)$. By the properties of the lifting operator, it follows that
\[
\gamma_t \mathbf{w}_{\mathbf{U}} = \mathbf{g}(\cdot, \gamma_T \mathbf{U}) \quad \text{on } \Gamma.
\]
We seek a solution in the decomposed form $\mathbf{E} = \mathbf{E}_0 + \mathbf{w}_{\mathbf{U}}$, where $\mathbf{E}_0 \in \mathbf{X}_0(\Omega)$. For a fixed $\mathbf{U}$, let $\mathbf{E}_0(\mathbf{U})$ be the unique solution in $\mathbf{X}_0(\Omega)$ to the variational problem
\begin{equation}\label{eq:NPEC_E0_weak}
a(\mathbf{E}_0, \mathbf{V}) = \langle F, \mathbf{V} \rangle - a(\mathbf{w}_{\mathbf{U}}, \mathbf{V}) \quad \forall \mathbf{V} \in \mathbf{X}_0(\Omega).
\end{equation}

The well-posedness of the linear PEC problem ensures that $\mathbf{E}_0(\mathbf{U})$ is uniquely determined. We then define the map
\[
\mathcal{T} : \mathbf{X}(\Omega) \to \mathbf{X}(\Omega), \quad \mathcal{T}(\mathbf{U}) := \mathbf{E}_0(\mathbf{U}) + \mathbf{w}_{\mathbf{U}}.
\]
To establish that $\mathcal{T}$ is a contraction, let $\mathbf{U}_1, \mathbf{U}_2 \in \mathbf{X}(\Omega)$ and set $\mathbf{w}_j := \mathbf{w}_{\mathbf{U}_j}$ and $\mathbf{E}_{0,j} := \mathbf{E}_0(\mathbf{U}_j)$ for $j=1,2$. From the boundedness of the lifting operator and the Lipschitz condition \eqref{eq:NPEC_lipschitz_trace}, we have
\[
\begin{aligned}
\|\mathbf{w}_1 - \mathbf{w}_2\|_{\mathbf{X}(\Omega)} &\le \|\mathcal{R}_\Gamma\| \left\| \mathbf{g}(\cdot, \gamma_T \mathbf{U}_1) - \mathbf{g}(\cdot, \gamma_T \mathbf{U}_2) \right\|_{Y_t^{\mathrm{reg}}(\Gamma)} \\
&\le \|\mathcal{R}_\Gamma\| L_{\mathrm{pec}} \|\gamma_T(\mathbf{U}_1 - \mathbf{U}_2)\|_{L^2(\Gamma)} \\
&\le \|\mathcal{R}_\Gamma\| L_{\mathrm{pec}} \|\mathbf{U}_1 - \mathbf{U}_2\|_{\mathbf{X}(\Omega)}.
\end{aligned}
\]
Subtracting the variations of \eqref{eq:NPEC_E0_weak} for $\mathbf{U}_1$ and $\mathbf{U}_2$ yields
\[
a(\mathbf{E}_{0,1} - \mathbf{E}_{0,2}, \mathbf{V}) = -a(\mathbf{w}_1 - \mathbf{w}_2, \mathbf{V}) \quad \forall \mathbf{V} \in \mathbf{X}_0(\Omega).
\]
Recall that $C_0 = \|A_0^{-1}\|$ and let $C_a$ be the continuity constant of $a(\cdot,\cdot)$, i.e.,
\[
|a(\mathbf{U},\mathbf{V})| \le C_a \|\mathbf{U}\|_{\mathbf{X}(\Omega)}\|\mathbf{V}\|_{\mathbf{X}(\Omega)}.
\]
Then, using the fact that $\mathbf{E}_{0,1} - \mathbf{E}_{0,2} = A_0^{-1}\bigl(-a(\mathbf{w}_1-\mathbf{w}_2,\cdot)\bigr)$, we obtain
\[
\|\mathbf{E}_{0,1} - \mathbf{E}_{0,2}\|_{\mathbf{X}(\Omega)} \le C_0 C_a \|\mathbf{w}_1 - \mathbf{w}_2\|_{\mathbf{X}(\Omega)}.
\]
It follows from the triangle inequality that
\[
\begin{aligned}
\|\mathcal{T}\mathbf{U}_1 - \mathcal{T}\mathbf{U}_2\|_{\mathbf{X}(\Omega)} &\le \|\mathbf{E}_{0,1} - \mathbf{E}_{0,2}\|_{\mathbf{X}(\Omega)} + \|\mathbf{w}_1 - \mathbf{w}_2\|_{\mathbf{X}(\Omega)} \\
&\le (1 + C_0 C_a) \|\mathbf{w}_1 - \mathbf{w}_2\|_{\mathbf{X}(\Omega)} \\
&\le (1 + C_0 C_a) \|\mathcal{R}_\Gamma\| L_{\mathrm{pec}} \|\mathbf{U}_1 - \mathbf{U}_2\|_{\mathbf{X}(\Omega)}.
\end{aligned}
\]
Under the smallness condition $(1 + C_0 C_a) \|\mathcal{R}_\Gamma\| L_{\mathrm{pec}} < 1$, the operator $\mathcal{T}$ is a contraction on the Hilbert space $\mathbf{X}(\Omega)$. The Banach fixed-point theorem then guarantees the existence of a unique fixed point $\mathbf{E} \in \mathbf{X}(\Omega)$.

By construction, this fixed point satisfies $\mathbf{E} = \mathbf{E}_0(\mathbf{E}) + \mathbf{w}_{\mathbf{E}}$. Since $\mathbf{E}_0(\mathbf{E}) \in \mathbf{X}_0(\Omega)$, we have $\gamma_t \mathbf{E} = \gamma_t \mathbf{w}_{\mathbf{E}} = \mathbf{g}(\cdot, \gamma_T \mathbf{E})$, which confirms that $\mathbf{E}$ satisfies the NPEC boundary condition. Moreover, \eqref{eq:NPEC_E0_weak} implies $a(\mathbf{E}, \mathbf{V}) = \langle \mathbf{F}, \mathbf{V} \rangle$ for all $\mathbf{V} \in \mathbf{X}_0(\Omega)$, so $\mathbf{E}$ is indeed a weak solution.

Conversely, suppose $\mathbf{E} \in \mathbf{X}(\Omega)$ is any weak solution to the NPEC problem. Defining $\mathbf{w}_{\mathbf{E}} = \mathcal{R}_\Gamma \mathbf{g}(\cdot, \gamma_T \mathbf{E})$, we see that $\mathbf{E} - \mathbf{w}_{\mathbf{E}}$ has a vanishing tangential trace on $\Gamma$ and thus belongs to $\mathbf{X}_0(\Omega)$. The weak formulation dictates that $\mathbf{E} - \mathbf{w}_{\mathbf{E}}$ must satisfy \eqref{eq:NPEC_E0_weak} with $\mathbf{U} = \mathbf{E}$. By the uniqueness of the solution to the linear problem, we must have $\mathbf{E} - \mathbf{w}_{\mathbf{E}} = \mathbf{E}_0(\mathbf{E})$, or equivalently, $\mathbf{E} = \mathcal{T}\mathbf{E}$. Thus, every weak solution is a fixed point of $\mathcal{T}$. The uniqueness of the fixed point then concludes the proof.
\end{proof}

\subsection{The nonlinear transmission condition (NTC)}

Lastly, we address the nonlinear transmission condition on the interior
interface \(\Gamma:=\partial D\). In this configuration, \(\Omega=B_R\), as
specified above. Here \(\mathbf X(\Omega)\) is understood as the trace-regular
subspace of \(\mathbf H(\operatorname{curl};\Omega)\) consisting of fields
whose common tangential trace on the interior interface \(\Gamma\) belongs to
\(L_t^2(\Gamma)\):
\[
\mathbf X(\Omega)
:=
\left\{
\mathbf E\in \mathbf H(\operatorname{curl};\Omega):
\gamma_T\mathbf E|_{\Gamma}\in L_t^2(\Gamma)
\right\}.
\]
The global membership \(\mathbf E\in \mathbf H(\operatorname{curl};\Omega)\)
incorporates the tangential continuity condition
\[
[\mathbf n\times \mathbf E]=0
\quad\text{on }\Gamma
\]
in the trace sense. Therefore, the common tangential component on \(\Gamma\)
is well defined and will be denoted by \(\gamma_T\mathbf E\).

The weak formulation of the nonlinear transmission problem consists in finding $\mathbf{E} \in \mathbf{X}(\Omega)$ such that
\[
\mathcal{A}^{\mathrm{tr}}(\mathbf{E}; \mathbf{V}) = \langle \mathbf{F}^{\mathrm{tr}}, \mathbf{V} \rangle \qquad \forall \mathbf{V} \in \mathbf{X}(\Omega),
\]
where the nonlinear form $\mathcal{A}^{\mathrm{tr}}(\cdot; \cdot)$ is defined by
\begin{equation}\label{eq:alternative_form}
\begin{aligned}
\mathcal{A}^{\mathrm{tr}}(\mathbf{E}; \mathbf{V}) &:= \int_{\Omega} \left[ \mu_r^{-1}(\nabla \times \mathbf{E}) \cdot (\nabla \times \overline{\mathbf{V}}) - k^2 \varepsilon_r \mathbf{E} \cdot \overline{\mathbf{V}} \right] \mathrm{d}\boldsymbol{x} \\
&\quad - \int_{\Gamma} \mathbf{g}(\cdot, \gamma_T \mathbf{E}) \cdot \gamma_T \overline{\mathbf{V}} \,\mathrm{d}s + \mathrm{i}k \langle \Lambda \gamma_t \mathbf{E}, \gamma_T \overline{\mathbf{V}} \rangle_{\Gamma_R}.
\end{aligned}
\end{equation}
The functional $\mathbf{F}^{\mathrm{tr}} \in \mathbf{X}(\Omega)^*$ is given by the incident field $\mathbf{E}^i$ as
\[
\langle \mathbf{F}^{\mathrm{tr}}, \mathbf{V} \rangle := \left\langle \mathrm{i}k \Lambda (\gamma_t \mathbf{E}^i) - \gamma_t (\nabla \times \mathbf{E}^i), \gamma_T \overline{\mathbf{V}} \right\rangle_{\Gamma_R}.
\]

Let \(\mathscr A_l^{\rm tr}:\mathbf X(\Omega)\to\mathbf X(\Omega)^*\) denote the
linear transmission operator obtained from \(\mathcal A\) by omitting the
nonlinear interface term:
\[
\begin{aligned}
\langle \mathscr  A_l^{\rm tr}\mathbf E,\mathbf V\rangle
&:=
\int_\Omega
\left[
\mu_r^{-1}(\nabla\times\mathbf E)
\cdot(\nabla\times\overline{\mathbf V})
-
k^2\varepsilon_r\mathbf E\cdot\overline{\mathbf V}
\right]\,\mathrm d\boldsymbol{x}
\\
&\quad
+
\mathrm i k
\left\langle
\Lambda\gamma_t\mathbf E,
\gamma_T\overline{\mathbf V}
\right\rangle_{\Gamma_R}.
\end{aligned}
\]
We use the standard weak well-posedness result for the corresponding linear
Maxwell transmission problem in \(\mathbf X(\Omega)\). In particular, if
\(\mathbf E_1,\mathbf E_2\in\mathbf X(\Omega)\) are the linear transmission
solutions corresponding to interface data
\(\mathbf f_1,\mathbf f_2\in L_t^2(\Gamma)\), respectively, then there exists
\(C_{\rm tr}>0\) such that
\begin{equation}\label{eq:linear_transmission_stability}
\|\mathbf E_1-\mathbf E_2\|_{\mathbf X(\Omega)}
\le
C_{\rm tr}
\|\mathbf f_1-\mathbf f_2\|_{L^2(\Gamma)} .
\end{equation}

\begin{coro}
\label{th:existence_ntc}
Assume that the nonlinear term $\mathbf{g}$ satisfies the $L^2$-Lipschitz condition \eqref{eq:g_lipschitz_L2}. If the smallness condition
\[
C_{\mathrm{tr}} L_g < 1
\]
holds, then the nonlinear transmission problem admits a unique weak solution $\mathbf{E} \in \mathbf{X}(\Omega)$.
\end{coro}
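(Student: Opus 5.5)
The argument mirrors the proof of Theorem~\ref{th:existence_nibc}: a Banach fixed-point iteration built on the linear transmission solution operator. For $\mathbf U\in\mathbf X(\Omega)$ we define $\mathcal T^{\rm tr}\mathbf U\in\mathbf X(\Omega)$ as the unique solution of
\[
\langle \mathscr A_l^{\rm tr}(\mathcal T^{\rm tr}\mathbf U),\mathbf V\rangle
=
\langle \mathbf F^{\rm tr},\mathbf V\rangle
+\int_\Gamma \mathbf g(\cdot,\gamma_T\mathbf U)\cdot\gamma_T\overline{\mathbf V}\,\mathrm ds,
\qquad\forall\,\mathbf V\in\mathbf X(\Omega).
\]

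\textbf{Well-definedness.} Since $\mathbf U\in\mathbf X(\Omega)$, the common tangential trace $\gamma_T\mathbf U$ lies in $L_t^2(\Gamma)$. By the standing assumption that the Nemytskii operator $\mathcal G$ maps $L_t^2(\Gamma)$ into itself, the interface datum $\mathbf f_{\mathbf U}:=\mathbf g(\cdot,\gamma_T\mathbf U)$ belongs to $L_t^2(\Gamma)$. The assumed well-posedness of the linear transmission problem with $L_t^2$ interface data then yields a unique $\mathcal T^{\rm tr}\mathbf U$.

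\textbf{Contraction.} Take $\mathbf U_1,\mathbf U_2\in\mathbf X(\Omega)$. The difference $\mathcal T^{\rm tr}\mathbf U_1-\mathcal T^{\rm tr}\mathbf U_2$ is the difference of linear transmission solutions with data $\mathbf f_{\mathbf U_1}$ and $\mathbf f_{\mathbf U_2}$. By \eqref{eq:linear_transmission_stability} and \eqref{eq:g_lipschitz_L2},
\[
\|\mathcal T^{\rm tr}\mathbf U_1-\mathcal T^{\rm tr}\mathbf U_2\|_{\mathbf X(\Omega)}
\le C_{\rm tr}L_g\|\gamma_T(\mathbf U_1-\mathbf U_2)\|_{L^2(\Gamma)}
\le C_{\rm tr}L_g\|\mathbf U_1-\mathbf U_2\|_{\mathbf X(\Omega)},
\]
where the last step uses that the $\mathbf X(\Omega)$ norm contains $\|\gamma_T\cdot\|_{L^2(\Gamma)}$. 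Since $C_{\rm tr}L_g<1$, the map $\mathcal T^{\rm tr}$ is a contraction on the Hilbert space $\mathbf X(\Omega)$, so it has a unique fixed point $\mathbf E$.

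\textbf{Equivalence with the weak formulation.} Substituting $\mathbf U=\mathbf E=\mathcal T^{\rm tr}\mathbf E$ into the defining identity and moving the interface integral to the left gives exactly $\mathcal A^{\rm tr}(\mathbf E;\mathbf V)=\langle\mathbf F^{\rm tr},\mathbf V\rangle$ for all $\mathbf V\in\mathbf X(\Omega)$, so $\mathbf E$ is a weak solution. Conversely, any weak solution $\mathbf E$ solves the linear problem with datum $\mathbf g(\cdot,\gamma_T\mathbf E)$. By linear uniqueness, $\mathbf E=\mathcal T^{\rm tr}\mathbf E$, so every weak solution is a fixed point, and uniqueness of the weak solution follows from uniqueness of the fixed point.

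The only point needing care is that the stability estimate \eqref{eq:linear_transmission_stability} must be understood for the trace-regular space $\mathbf X(\Omega)$ over $B_R$, with data entering through the interior interface $\Gamma$ rather than a boundary. This is taken as given in the paper, so no real obstacle remains. The argument is therefore a direct transcription of Theorem~\ref{th:existence_nibc}.
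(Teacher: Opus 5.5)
Your proposal is correct and follows the paper's proof essentially verbatim: the same solution operator $\mathcal T$ built from $\mathscr A_l^{\rm tr}$ with interface datum $\mathbf g(\cdot,\gamma_T\mathbf U)$, the same contraction estimate via \eqref{eq:linear_transmission_stability} and \eqref{eq:g_lipschitz_L2}, and the Banach fixed-point theorem. Your explicit check that every weak solution is a fixed point is a small but useful addition, since the paper only asserts uniqueness of the weak solution ``by construction''.
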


\begin{proof}
For a fixed $\mathbf{U} \in \mathbf{X}(\Omega)$, let $\mathcal{T}\mathbf{U} \in \mathbf{X}(\Omega)$ be the unique solution of the linear transmission problem
\[
\langle \mathscr {A}_l^{\mathrm{tr}} \mathcal{T}\mathbf{U}, \mathbf{V} \rangle = \langle \mathbf{F}^{\mathrm{tr}}, \mathbf{V} \rangle + \int_{\Gamma} \mathbf{g}(\cdot, \gamma_T \mathbf{U}) \cdot \gamma_T \overline{\mathbf{V}} \,\mathrm{d}s \qquad \forall \mathbf{V} \in \mathbf{X}(\Omega),
\]
where $\mathscr {A}_l^{\mathrm{tr}}$ denotes the linear part of the transmission operator. The map $\mathcal{T}$ is well-defined since $\gamma_T \mathbf{U} \in L_t^2(\Gamma)$ implies $\mathbf{g}(\cdot, \gamma_T \mathbf{U}) \in L_t^2(\Gamma)$ by \eqref{eq:g_lipschitz_L2}.

Let $\mathbf{U}_1, \mathbf{U}_2 \in \mathbf{X}(\Omega)$ and define $\mathbf{E}_j := \mathcal{T}\mathbf{U}_j$ for $j=1,2$. Invoking the stability estimate for the linear transmission problem \eqref{eq:linear_transmission_stability}, we have
\[
\|\mathbf{E}_1 - \mathbf{E}_2\|_{\mathbf{X}(\Omega)} \le C_{\mathrm{tr}} \left\| \mathbf{g}(\cdot, \gamma_T \mathbf{U}_1) - \mathbf{g}(\cdot, \gamma_T \mathbf{U}_2) \right\|_{L^2(\Gamma)}.
\]
Applying the Lipschitz condition \eqref{eq:g_lipschitz_L2}, we obtain
\[
\|\mathcal{T}\mathbf{U}_1 - \mathcal{T}\mathbf{U}_2\|_{\mathbf{X}(\Omega)} \le C_{\mathrm{tr}} L_g \|\mathbf{U}_1 - \mathbf{U}_2\|_{\mathbf{X}(\Omega)}.
\]
Thus, $\mathcal{T}$ is a contraction on the Hilbert space $\mathbf{X}(\Omega)$ provided that $C_{\mathrm{tr}} L_g < 1$. By the Banach fixed-point theorem, there exists a unique fixed point $\mathbf{E} \in \mathbf{X}(\Omega)$. By construction, this fixed point is the unique weak solution to the nonlinear transmission problem.
\end{proof}

\section{Domain dependence and material derivatives}
\label{sec:3}

In this section, we analyze the sensitivity of the electric field with respect to shape variations of the boundary $\Gamma = \partial D$. Let the obstacle $D \subset \mathbb{R}^3$ be a bounded domain of class $C^2$. We consider a perturbation field $\mathbf{h} \in C_c^1(B_R; \mathbb{R}^3)$ whose support is strictly contained in a neighborhood of $\Gamma$, thereby ensuring that the artificial boundary $\Gamma_R$ remains invariant. For $\|\mathbf{h}\|_{C^1}$ sufficiently small, the mapping
\[
\varphi_{\mathbf{h}}(\boldsymbol{x}) := \boldsymbol{x} + \mathbf{h}(\boldsymbol{x})
\]
is a $C^1$-diffeomorphism of $\mathbb{R}^3$. The resulting perturbed domains and boundaries are defined by
\[
D_{\mathbf{h}} := \varphi_{\mathbf{h}}(D), \qquad \Omega_{\mathbf{h}} := B_R \setminus \overline{D_{\mathbf{h}}}, \qquad \Gamma_{\mathbf{h}} := \varphi_{\mathbf{h}}(\Gamma).
\]

\subsection{The nonlinear impedance boundary condition}

Let $\mathbf{E}_{\mathbf{h}}$ denote the total electric field associated with the perturbed obstacle $D_{\mathbf{h}}$. Within the analytical framework established in Section~\ref{sec:2}, we assume that $\mathbf{E}_{\mathbf{h}}$ is the unique weak solution to the perturbed nonlinear impedance problem in the trace-regular space $\mathbf{X}(\Omega_{\mathbf{h}})$. This field is characterized by the variational equation
\begin{equation}\label{eq:perturbed_variational}
\mathcal{A}_{\mathbf{h}}(\mathbf{E}_{\mathbf{h}}, \mathbf{V}_{\mathbf{h}}) = \langle F, \mathbf{V}_{\mathbf{h}} \rangle \qquad \forall \mathbf{V}_{\mathbf{h}} \in \mathbf{X}(\Omega_{\mathbf{h}}),
\end{equation}
where the nonlinear form $\mathcal{A}_{\mathbf{h}}(\cdot, \cdot)$ is defined by
\begin{equation}\label{eq:perturbed_var_form}
\begin{aligned}
\mathcal{A}_{\mathbf{h}}(\mathbf{E}_{\mathbf{h}}, \mathbf{V}_{\mathbf{h}}) &:= \int_{\Omega_{\mathbf{h}}} \left[ (\nabla \times \mathbf{E}_{\mathbf{h}}) \cdot (\nabla \times \overline{\mathbf{V}}_{\mathbf{h}}) - k^2 \mathbf{E}_{\mathbf{h}} \cdot \overline{\mathbf{V}}_{\mathbf{h}} \right] \mathrm{d}\boldsymbol{y} \\
&\quad + \mathrm{i}k \int_{\Gamma_{\mathbf{h}}} \lambda (\gamma_T^{\mathbf{h}} \mathbf{E}_{\mathbf{h}}) \cdot (\gamma_T^{\mathbf{h}} \overline{\mathbf{V}}_{\mathbf{h}}) \,\mathrm{d}s_{\boldsymbol{y}} \\
&\quad - \int_{\Gamma_{\mathbf{h}}} \mathbf{g}(\boldsymbol{y}, \gamma_T^{\mathbf{h}} \mathbf{E}_{\mathbf{h}}) \cdot \gamma_T^{\mathbf{h}} \overline{\mathbf{V}}_{\mathbf{h}} \,\mathrm{d}s_{\boldsymbol{y}} \\
&\quad + \mathrm{i}k \langle \Lambda \gamma_t \mathbf{E}_{\mathbf{h}}, \gamma_T \overline{\mathbf{V}}_{\mathbf{h}} \rangle_{\Gamma_R}.
\end{aligned}
\end{equation}
Here $\gamma_T^{\mathbf{h}}$ denotes the tangential trace on the perturbed boundary $\Gamma_{\mathbf{h}}$.
Since the perturbation $\mathbf{h}$ is supported away from $\Gamma_R$, the artificial boundary and the source functional remain invariant under the deformation. Consequently, the right-hand side of \eqref{eq:perturbed_variational} is given by
\begin{equation}\label{eq:F_perturbed}
\langle F, \mathbf{V}_{\mathbf{h}} \rangle = \left\langle \mathrm{i}k \Lambda (\gamma_t \mathbf{E}^i) - \gamma_t (\nabla \times \mathbf{E}^i), \gamma_T \overline{\mathbf{V}}_{\mathbf{h}} \right\rangle_{\Gamma_R}.
\end{equation}

To rigorously define the spatial dependence of the nonlinear boundary term on $\Gamma_{\mathbf{h}}$, we assume that $\mathbf{g}$ is defined on a tubular neighborhood $U_\Gamma$ of the reference interface $\Gamma$, taking the form
\[
\mathbf{g} : U_\Gamma \times \mathbb{C}^3 \to \mathbb{C}^3.
\]
Consequently, for any point $\boldsymbol{y} = \varphi_{\mathbf{h}}(\boldsymbol{x}) \in \Gamma_{\mathbf{h}}$, the composition $\mathbf{g}(\boldsymbol{y}, \gamma_T^{\mathbf{h}}\mathbf{E}_{\mathbf{h}}(\boldsymbol{y}))$ is well-defined provided $\|\mathbf{h}\|_{C^1}$ is sufficiently small.

We now pull the perturbed problem back to the fixed reference domain $\Omega$. Let $\boldsymbol{y} = \varphi_{\mathbf{h}}(\boldsymbol{x})$ and denote the Jacobian matrix of the transformation by $J_{\varphi} := \nabla \varphi_{\mathbf{h}} = \mathrm{Id} + \nabla \mathbf{h}$. It is well known that the standard composition $\mathbf{E}_{\mathbf{h}} \circ \varphi_{\mathbf{h}}$ does not preserve the $H(\mathrm{curl})$-structure of the fields. We therefore utilize the covariant Piola transform, defined by
\begin{equation}\label{eq:piola_transform}
\hat{\mathbf{E}}_{\mathbf{h}} := J_{\varphi}^{\top} (\mathbf{E}_{\mathbf{h}} \circ \varphi_{\mathbf{h}}).
\end{equation}
This transformation ensures that $\hat{\mathbf{E}}_{\mathbf{h}} \in H(\mathrm{curl}; \Omega)$ if and only if $\mathbf{E}_{\mathbf{h}} \in H(\mathrm{curl}; \Omega_{\mathbf{h}})$.

To characterize the boundary terms, we define the unit normal to the perturbed boundary and the associated tangential projection operator as
\[
\widetilde{\mathbf{n}}_{\mathbf{h}} := \frac{J_{\varphi}^{-\top} \mathbf{n}}{\|J_{\varphi}^{-\top} \mathbf{n}\|}, \qquad P_{\mathbf{h}} := \mathrm{Id} - \widetilde{\mathbf{n}}_{\mathbf{h}} \otimes \widetilde{\mathbf{n}}_{\mathbf{h}}.
\]
For any field $\mathbf{U} \in \mathbf{X}(\Omega)$, we introduce the transported tangential trace 
\begin{equation}\label{eq:transported_trace_definition}
\gamma_{T,\mathbf{h}} \mathbf{U} := P_{\mathbf{h}} J_{\varphi}^{-\top} \gamma_T \mathbf{U},
\end{equation}
where $\gamma_T$ is the tangential trace on the reference boundary $\Gamma$. For sufficiently smooth fields, \eqref{eq:transported_trace_definition} admits the equivalent representation
\begin{equation}\label{eq:E_tangential}
\gamma_{T,\mathbf{h}} \mathbf{U} = \widetilde{\mathbf{n}}_{\mathbf{h}} \times \bigl( J_{\varphi}^{-\top} \mathbf{U} \times \widetilde{\mathbf{n}}_{\mathbf{h}} \bigr).
\end{equation}
A crucial property of the Piola transform is that the pullback of the tangential trace on $\Gamma_{\mathbf{h}}$ coincides with the transported trace of the pullback field:
\begin{equation}\label{eq:transported_tangential_trace}
(\gamma_T^{\mathbf{h}} \mathbf{E}_{\mathbf{h}}) \circ \varphi_{\mathbf{h}} = \gamma_{T,\mathbf{h}} \hat{\mathbf{E}}_{\mathbf{h}}.
\end{equation}
Since the operator $P_{\mathbf{h}} J_{\varphi}^{-\top}$ is uniformly bounded for sufficiently small $\|\mathbf{h}\|_{C^1}$, we have the estimate
\[
\|\gamma_{T,\mathbf{h}} \mathbf{U}\|_{L^2(\Gamma)} \le C \|\gamma_T \mathbf{U}\|_{L^2(\Gamma)} \le C \|\mathbf{U}\|_{\mathbf{X}(\Omega)}.
\]
It follows that $\mathbf{E}_{\mathbf{h}} \in \mathbf{X}(\Omega_{\mathbf{h}})$ implies $\hat{\mathbf{E}}_{\mathbf{h}} \in \mathbf{X}(\Omega)$, establishing the required regularity on the fixed domain.

The covariant Piola transform preserves the \(H(\operatorname{curl})\)-structure
of the fields and is therefore the natural pullback for Maxwell equations
\cite[Lemma~3.58]{Monk2003}. Under this transform, the curl operator satisfies
\[
(\nabla_{\boldsymbol y}\times \mathbf E_{\mathbf h})\circ\varphi_{\mathbf h}
=
\frac{1}{\det J_{\varphi}}
J_{\varphi}
(\nabla_{\boldsymbol x}\times \widehat{\mathbf E}_{\mathbf h}).
\]
By pulling back the variational problem \eqref{eq:perturbed_var_form} to the fixed reference domain $\Omega$, we obtain the transformed formulation: find $\hat{\mathbf{E}}_{\mathbf{h}} \in \mathbf{X}(\Omega)$ such that
\begin{equation}\label{eq:pullback_problem}
\widetilde{\mathcal{A}}_{\mathbf{h}}(\hat{\mathbf{E}}_{\mathbf{h}}, \mathbf{V}) = \langle F, \mathbf{V} \rangle \qquad \forall \mathbf{V} \in \mathbf{X}(\Omega),
\end{equation}
where the transformed nonlinear form $\widetilde{\mathcal{A}}_{\mathbf{h}}$ is given by
\begin{equation}\label{eq:weak_form_pullback_impedance}
\begin{aligned}
\widetilde{\mathcal{A}}_{\mathbf{h}}(\hat{\mathbf{E}}_{\mathbf{h}}, \mathbf{V}) &:= \int_{\Omega} (\nabla \times \hat{\mathbf{E}}_{\mathbf{h}})^{\top} \mathcal{M}_{\mathbf{h}} (\nabla \times \overline{\mathbf{V}}) \,\mathrm{d}\boldsymbol{x} - k^2 \int_{\Omega} \hat{\mathbf{E}}_{\mathbf{h}}^{\top} \mathcal{N}_{\mathbf{h}} \overline{\mathbf{V}} \,\mathrm{d}\boldsymbol{x} \\
&\quad + \mathrm{i}k \int_{\Gamma} \lambda (\gamma_{T,\mathbf{h}} \hat{\mathbf{E}}_{\mathbf{h}}) \cdot (\gamma_{T,\mathbf{h}} \overline{\mathbf{V}}) \, \omega_{\mathbf{h}} \,\mathrm{d}s \\
&\quad - \int_{\Gamma} \mathbf{g} \bigl( \varphi_{\mathbf{h}}(\cdot), \gamma_{T,\mathbf{h}} \hat{\mathbf{E}}_{\mathbf{h}} \bigr) \cdot (\gamma_{T,\mathbf{h}} \overline{\mathbf{V}}) \, \omega_{\mathbf{h}} \,\mathrm{d}s \\
&\quad + \mathrm{i}k \langle \Lambda \gamma_t \hat{\mathbf{E}}_{\mathbf{h}}, \gamma_T \overline{\mathbf{V}} \rangle_{\Gamma_R}.
\end{aligned}
\end{equation}
The geometric coefficients in the bulk are defined as
\[
\mathcal{M}_{\mathbf{h}} := \frac{1}{\det J_{\varphi}} J_{\varphi}^{\top} J_{\varphi}, \qquad \mathcal{N}_{\mathbf{h}} := \det J_{\varphi} J_{\varphi}^{-1} J_{\varphi}^{-\top},
\]
and the surface Jacobian is $\omega_{\mathbf{h}} := \det J_{\varphi} \|J_{\varphi}^{-\top} \mathbf{n}\|$. Equivalently, if $\phi$ is a local parametrization of $\Gamma$ and we set $\widehat{\phi} = \varphi_{\mathbf{h}} \circ \phi$, then the surface Jacobian is given by the ratio of the square roots of the metric determinants:
\[
\omega_{\mathbf{h}} = \frac{\sqrt{\det(J_{\widehat{\phi}}^{\top} J_{\widehat{\phi}})}}{\sqrt{\det(J_{\phi}^{\top} J_{\phi})}}.
\]

We now record the first-order expansions of the geometric quantities used
below. Under the \(C^2\)-regularity assumption on \(\Gamma\), the pulled-back
unit normal
\[
\widetilde{\mathbf{n}}_{\mathbf{h}} = \mathbf{n} + \delta \mathbf{n} + o(\|\mathbf{h}\|_{C^1}),
\]
where the shape derivative $\delta \mathbf{n}$, as derived in \cite[Chap.~9, Eq.~(4.38)]{Delfour2011}, is given by
\begin{equation}\label{eq:normal_shape_derivative}
\delta \mathbf{n} = -\nabla_{\Gamma} h_n + J_{\mathbf{n}} \mathbf{h}_T.
\end{equation}
Here, the perturbation field is decomposed into its tangential and normal components on $\Gamma$ as $\mathbf{h} = \mathbf{h}_T + h_n \mathbf{n}$, with $\mathbf{h}_T := \mathbf{n} \times (\mathbf{h} \times \mathbf{n})$ and $h_n := \mathbf{h} \cdot \mathbf{n}$. Similarly, the surface Jacobian expands as
\begin{equation}\label{eq:surface_jacobian_expansion}
\omega_{\mathbf{h}} = 1 + \operatorname{div}_{\Gamma} \mathbf{h}_T + 2\kappa h_n + o(\|\mathbf{h}\|_{C^1}),
\end{equation}
where $2\kappa = \operatorname{div}_{\Gamma} \mathbf{n}$ denotes the additive  curvature of $\Gamma$ consistent with our choice of unit normal and $\kappa$ is the usual mean curvature.

For the reader's convenience, we briefly summarize the definitions of standard surface differential operators following \cite[Section~9.5]{Delfour2011}. Let $\Gamma$ be a sufficiently smooth surface with unit outward normal $\mathbf{n}$. Given a scalar function $\psi$ and a tangential vector field $\mathbf{v}$ defined on $\Gamma$, let $\widetilde{\psi}$ and $\widetilde{\mathbf{v}}$ denote their respective smooth extensions to a tubular neighborhood of $\Gamma$. The surface gradient, surface divergence, vector curl, and scalar curl are defined by
\begin{align*}
\nabla_\Gamma\psi &:= \nabla\widetilde\psi - (\mathbf{n}\cdot\nabla\widetilde\psi)\mathbf{n}, \\
\operatorname{div}_\Gamma\mathbf{v} &:= \nabla\cdot\widetilde{\mathbf{v}} - \mathbf{n}^\top(\nabla\widetilde{\mathbf{v}})\mathbf{n}, \\
\operatorname{Curl}_\Gamma\psi &:= \mathbf{n}\times\nabla_\Gamma\psi, \\
\operatorname{curl}_\Gamma\mathbf{v} &:= (\nabla\times\widetilde{\mathbf{v}})\cdot\mathbf{n}.
\end{align*}
These intrinsic operators depend only on the boundary data $\psi$ and $\mathbf{v}$, and are invariant with respect to the choice of the extensions $\widetilde{\psi}$ and $\widetilde{\mathbf{v}}$.

The geometric quantities $\gamma_{T,\mathbf{h}}$, $\mathcal{M}_{\mathbf{h}}$, $\mathcal{N}_{\mathbf{h}}$, and $\omega_{\mathbf{h}}$ introduced in the preceding pullback analysis are universal in the sense that they appear in the NPEC and transmission problems discussed hereafter. We provide their first-order linearizations in the following lemma.

\begin{lemm}\label{le:positive}
Let $\varphi_{\mathbf{h}} = \mathrm{Id} + \mathbf{h}$ with $\mathbf{h} \in C_c^1(B_R; \mathbb{R}^3)$. As $\|\mathbf{h}\|_{C^1} \to 0$, the following asymptotic expansions hold for the bulk geometric coefficients:
\begin{align*}
\mathcal{M}_{\mathbf{h}} &= \mathrm{Id} - (\operatorname{div}\mathbf{h})\mathrm{Id} + J_{\mathbf{h}} + J_{\mathbf{h}}^{\top} + o(\|\mathbf{h}\|_{C^1}), \\
\mathcal{N}_{\mathbf{h}} &= \mathrm{Id} + (\operatorname{div}\mathbf{h})\mathrm{Id} - J_{\mathbf{h}} - J_{\mathbf{h}}^{\top} + o(\|\mathbf{h}\|_{C^1}),
\end{align*}
where $J_{\mathbf{h}} := \nabla\mathbf{h}$ is the displacement gradient. Furthermore, if $\Gamma$ is sufficiently smooth, the surface Jacobian admits the expansion
\[
\omega_{\mathbf{h}} = 1 + \operatorname{div}_\Gamma\mathbf{h}_T + 2\kappa h_n + o(\|\mathbf{h}\|_{C^1}),
\]
where $\mathbf{h}_T$ and $h_n$ denote the tangential and normal components of $\mathbf{h}$ on $\Gamma$, respectively.
\end{lemm}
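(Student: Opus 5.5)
The bulk expansions follow from elementary first-order perturbation of matrix-valued functions of $J_{\mathbf h}=\nabla\mathbf h$. Write $J_\varphi=\mathrm{Id}+J_{\mathbf h}$ with $\|J_{\mathbf h}\|_{L^\infty}\le\|\mathbf h\|_{C^1}$. We will use three standard facts, each valid uniformly in $\boldsymbol x$ as $\|\mathbf h\|_{C^1}\to0$.
\begin{itemize}
\item The determinant expansion: $\det(\mathrm{Id}+J_{\mathbf h})=1+\operatorname{tr}J_{\mathbf h}+O(|J_{\mathbf h}|^2)=1+\operatorname{div}\mathbf h+o(\|\mathbf h\|_{C^1})$. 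This comes from expanding the determinant and noting that all remaining terms are products of at least two entries of $J_{\mathbf h}$.
\item Consequently, $(\det J_\varphi)^{-1}=1-\operatorname{div}\mathbf h+o(\|\mathbf h\|_{C^1})$.
\item The Neumann series: $J_\varphi^{-1}=\mathrm{Id}-J_{\mathbf h}+O(|J_{\mathbf h}|^2)$, which converges once $\|J_{\mathbf h}\|<1$. Similarly $J_\varphi^{-\top}=\mathrm{Id}-J_{\mathbf h}^\top+O(|J_{\mathbf h}|^2)$.
\end{itemize}
Multiplying out gives
\[
J_\varphi^\top J_\varphi=\mathrm{Id}+J_{\mathbf h}+J_{\mathbf h}^\top+O(|J_{\mathbf h}|^2), \qquad J_\varphi^{-1}J_\varphi^{-\top}=\mathrm{Id}-J_{\mathbf h}-J_{\mathbf h}^\top+O(|J_{\mathbf h}|^2).
\]
Combining these with the scalar factors and discarding all products of two first-order quantities yields the stated formulas for $\mathcal M_{\mathbf h}$ and $\mathcal N_{\mathbf h}$. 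The remainders are bounded by $C\|\mathbf h\|_{C^1}^2$ in $L^\infty$, which is $o(\|\mathbf h\|_{C^1})$.

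For the surface Jacobian we use the representation $\omega_{\mathbf h}=\det J_\varphi\,\|J_\varphi^{-\top}\mathbf n\|$. From the Neumann series, $J_\varphi^{-\top}\mathbf n=\mathbf n-J_{\mathbf h}^\top\mathbf n+O(|J_{\mathbf h}|^2)$. Since $|\mathbf n|=1$,
\[
\|J_\varphi^{-\top}\mathbf n\|=\bigl(1-2\mathbf n\cdot J_{\mathbf h}^\top\mathbf n+O(|J_{\mathbf h}|^2)\bigr)^{1/2}=1-\mathbf n^\top(\nabla\mathbf h)\mathbf n+o(\|\mathbf h\|_{C^1}).
\]
Therefore
\[
\omega_{\mathbf h}=1+\operatorname{div}\mathbf h-\mathbf n^\top(\nabla\mathbf h)\mathbf n+o(\|\mathbf h\|_{C^1})=1+\operatorname{div}_\Gamma\mathbf h+o(\|\mathbf h\|_{C^1}),
\]
where the last step is exactly the definition of the tangential divergence recalled above, with $\mathbf h$ itself serving as the extension.

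It remains to decompose $\operatorname{div}_\Gamma\mathbf h$ on $\Gamma$. Write $\mathbf h=\mathbf h_T+h_n\mathbf n$, where $\mathbf n$ is extended, for instance, as the gradient of the signed distance function; this extension is $C^1$ in a tubular neighbourhood because $\Gamma$ is $C^2$. The product rule for the tangential divergence gives
\[
\operatorname{div}_\Gamma(h_n\mathbf n)=\nabla_\Gamma h_n\cdot\mathbf n+h_n\operatorname{div}_\Gamma\mathbf n=2\kappa h_n,
\]
since $\nabla_\Gamma h_n$ is tangential. This produces $\omega_{\mathbf h}=1+\operatorname{div}_\Gamma\mathbf h_T+2\kappa h_n+o(\|\mathbf h\|_{C^1})$. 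Note that $\operatorname{div}_\Gamma\mathbf h_T$ is understood intrinsically, via the invariance under extensions stated before the lemma.

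The main point needing care is this last step. The issue is not the algebra but the fact that splitting $\mathbf h$ requires differentiating $\mathbf n$, which needs the $C^2$ regularity of $\Gamma$. One must also confirm that the product rule is consistent with the extension-independent definition of $\operatorname{div}_\Gamma$; this follows because $\mathbf n^\top(\nabla\mathbf n)\mathbf n=\tfrac12\partial_{\mathbf n}|\mathbf n|^2=0$ for the unit-length extension.

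A secondary point is uniformity of the $o(\cdot)$ terms. All remainders are polynomial or analytic in the entries of $J_{\mathbf h}$ with bounded coefficients, because $\mathbf n$ and the curvature are bounded on a compact $C^2$ surface. This gives uniform $O(\|\mathbf h\|_{C^1}^2)$ bounds, which is all the later sensitivity analysis requires.
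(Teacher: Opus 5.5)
Your proof is correct and follows the same route as the paper. The bulk expansions come from substituting the first-order linearizations of $J_\varphi$, $J_\varphi^{-1}$, $\det J_\varphi$ and $(\det J_\varphi)^{-1}$ into the definitions of $\mathcal M_{\mathbf h}$ and $\mathcal N_{\mathbf h}$, and the surface Jacobian expansion comes from the representation $\omega_{\mathbf h}=\det J_\varphi\,\|J_\varphi^{-\top}\mathbf n\|$; where the paper simply cites the classical first-variation formula for surface measure from Delfour--Zol\'esio, you derive it explicitly (expanding to $1+\operatorname{div}_\Gamma\mathbf h$ and splitting off $\operatorname{div}_\Gamma(h_n\mathbf n)=2\kappa h_n$), which makes the argument self-contained.
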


\begin{proof}
The expansions for $\mathcal{M}_{\mathbf{h}}$ and $\mathcal{N}_{\mathbf{h}}$ follow from a straightforward application of the linearizations
\[
J_{\varphi} = \mathrm{Id} + J_{\mathbf{h}} + o(\|\mathbf{h}\|_{C^1}), \qquad J_{\varphi}^{-1} = \mathrm{Id} - J_{\mathbf{h}} + o(\|\mathbf{h}\|_{C^1}),
\]
and
\[
\det J_{\varphi} = 1 + \operatorname{div} \mathbf{h} + o(\|\mathbf{h}\|_{C^1}), \qquad (\det J_{\varphi})^{-1} = 1 - \operatorname{div} \mathbf{h} + o(\|\mathbf{h}\|_{C^1}).
\]
Substituting these identities into the definitions of $\mathcal{M}_{\mathbf{h}}$ and $\mathcal{N}_{\mathbf{h}}$ and discarding terms of order $o(\|\mathbf{h}\|_{C^1})$ yields the stated results; a detailed derivation of these bulk expansions can be found in \cite{Hettlich2012}.

The expansion for the surface Jacobian follows from the definition $\omega_{\mathbf{h}} = \det J_{\varphi} \|J_{\varphi}^{-\top} \mathbf{n}\|$ combined with the classical first variation formula for the surface measure. We refer the reader to \cite[Chapter~9]{Delfour2011} for further details.
\end{proof}
We henceforth assume that the nonlinear boundary response $\mathbf{g}$ satisfies the following conditions, which will be maintained for both the NPEC and nonlinear transmission problems analyzed in the sequel.

\begin{assu}\label{ass:enhanced_g}
Let \(U_\Gamma\) be a tubular neighborhood of \(\Gamma\). The nonlinear
response
\[
\mathbf g:U_\Gamma\times\mathbb C^3\to\mathbb C^3
\]
is assumed to be a Carath\'eodory function satisfying the following
conditions.

\begin{enumerate}
\item[\textup{(A1)}]
For each \(\boldsymbol z\in\mathbb C^3\), the map
$\boldsymbol x\mapsto \mathbf g(\boldsymbol x,\boldsymbol z)$
is continuously differentiable in \(U_\Gamma\). Moreover, there exist
constants \(c_0,c_1>0\) and nonnegative functions
\(\psi_0,\psi_1\) defined in \(U_\Gamma\), whose restrictions to
\(\Gamma\) and to the perturbed boundaries \(\Gamma_{\mathbf h}\) are
uniformly bounded in \(L^2\) for all sufficiently small
\(\|\mathbf h\|_{C^1}\), such that
\[
|\mathbf g(\boldsymbol x,\boldsymbol z)|
\le
\psi_0(\boldsymbol x)+c_0|\boldsymbol z|,\qquad
|\nabla_{\boldsymbol x}\mathbf g(\boldsymbol x,\boldsymbol z)|
\le
\psi_1(\boldsymbol x)+c_1|\boldsymbol z|
\]
for a.e. \(\boldsymbol x\) on \(\Gamma\) and on \(\Gamma_{\mathbf h}\), and
for all \(\boldsymbol z\in\mathbb C^3\).

\item[\textup{(A2)}]
The first variations
$\mathbf g_{\boldsymbol z}\text,\quad
\mathbf g_{\boldsymbol x\boldsymbol z}$
with respect to the second variable exist in the real sense. More precisely,
for a.e. \(\boldsymbol x\) on \(\Gamma\) and on the perturbed boundaries
\(\Gamma_{\mathbf h}\), the maps
$
\boldsymbol w\mapsto
\mathbf g_{\boldsymbol z}(\boldsymbol x,\boldsymbol z;\boldsymbol w),
\quad
\boldsymbol w\mapsto
\mathbf g_{\boldsymbol x\boldsymbol z}
(\boldsymbol x,\boldsymbol z;\boldsymbol w)
$
are \(\mathbb R\)-linear in \(\boldsymbol w\in\mathbb C^3\), and satisfy
\[
\mathbf g(\boldsymbol x,\boldsymbol z+\boldsymbol w)
-
\mathbf g(\boldsymbol x,\boldsymbol z)
=
\mathbf g_{\boldsymbol z}
(\boldsymbol x,\boldsymbol z;\boldsymbol w)
+
o(|\boldsymbol w|),
\]
and
\[
\nabla_{\boldsymbol x}\mathbf g
(\boldsymbol x,\boldsymbol z+\boldsymbol w)
-
\nabla_{\boldsymbol x}\mathbf g
(\boldsymbol x,\boldsymbol z)
=
\mathbf g_{\boldsymbol x\boldsymbol z}
(\boldsymbol x,\boldsymbol z;\boldsymbol w)
+
o(|\boldsymbol w|)
\]
as \(|\boldsymbol w|\to0\). In addition, there exists a constant
\(C_g>0\), independent of \(\boldsymbol x,\boldsymbol z,\boldsymbol w\) and
of the admissible perturbation \(\mathbf h\), such that
\[
|\mathbf g_{\boldsymbol z}
(\boldsymbol x,\boldsymbol z;\boldsymbol w)|
+
|\mathbf g_{\boldsymbol x\boldsymbol z}
(\boldsymbol x,\boldsymbol z;\boldsymbol w)|
\le
C_g|\boldsymbol w|.
\]

\item[\textup{(A3)}]
We further assume that the associated Nemytskii operators admit first-order
expansions in \(L_t^2(\Gamma)\). More precisely, for any
\(\boldsymbol\zeta\in L_t^2(\Gamma)\) and for any
\(\boldsymbol\eta\in L_t^2(\Gamma)\) with
\(\|\boldsymbol\eta\|_{L^2(\Gamma)}\to0\), one has
\[
\left\|\mathbf g(\cdot,\boldsymbol\zeta+\boldsymbol\eta)-
\mathbf g(\cdot,\boldsymbol\zeta)-\mathbf g_{\boldsymbol z}
(\cdot,\boldsymbol\zeta;\boldsymbol\eta)
\right\|_{L^2(\Gamma)}
=o(\|\boldsymbol\eta\|_{L^2(\Gamma)}),
\]
and
\[
\left\|\nabla_{\boldsymbol x}\mathbf g
(\cdot,\boldsymbol\zeta+\boldsymbol\eta)-
\nabla_{\boldsymbol x}\mathbf g(\cdot,\boldsymbol\zeta)
-\mathbf g_{\boldsymbol x\boldsymbol z}
(\cdot,\boldsymbol\zeta;\boldsymbol\eta)
\right\|_{L^2(\Gamma)}
=o(\|\boldsymbol\eta\|_{L^2(\Gamma)}).
\]
The remainders are uniform for
\(\boldsymbol\zeta\) in bounded subsets of \(L_t^2(\Gamma)\).
\item[\textup{(A4)}]
The estimates and expansions above are stable under admissible domain
perturbations. More precisely, after pulling back the quantities defined on
\(\Gamma_{\mathbf h}\) to the reference boundary \(\Gamma\), the same
\(L^2\)-bounds and Nemytskii remainders hold with constants independent of
\(\mathbf h\), for all sufficiently small \(\|\mathbf h\|_{C^1}\).
\end{enumerate}
\end{assu}
Equipped with Lemma~\ref{le:positive} and Assumption~\ref{ass:enhanced_g}, we are now prepared to establish the continuity of the pulled-back electric field with respect to the shape perturbation $\mathbf{h}$.

\begin{theo}\label{th:continuity_domain_dependence}
Suppose that $\mathbf{g}$ satisfies Assumption~\ref{ass:enhanced_g} and the $L^2$-Lipschitz condition \eqref{eq:g_lipschitz_L2}. Let $\mathbf{E} \in \mathbf{X}(\Omega)$ and $\hat{\mathbf{E}}_{\mathbf{h}} \in \mathbf{X}(\Omega)$ denote the unique weak solutions to the scattering problems \eqref{eq:main_formulation} and \eqref{eq:pullback_problem}, respectively. Further, assume that the boundary response is uniformly $L^2$-Lipschitz in the trace variable: there exists a constant $L_g > 0$ such that
\[
\|\mathbf{g}(\cdot, \boldsymbol{\zeta}_1) - \mathbf{g}(\cdot, \boldsymbol{\zeta}_2)\|_{L^2(\Gamma)} \le L_g \|\boldsymbol{\zeta}_1 - \boldsymbol{\zeta}_2\|_{L^2(\Gamma)}
\]
for all $\boldsymbol{\zeta}_1, \boldsymbol{\zeta}_2$ in a bounded subset of $L_t^2(\Gamma)$. If $L_g$ is sufficiently small, then the solution $\hat{\mathbf{E}}_{\mathbf{h}}$ depends continuously on the deformation in the sense that
\begin{equation}
\label{eq:continuity_limit}
\lim_{\|\mathbf{h}\|_{C^1} \to 0} \|\hat{\mathbf{E}}_{\mathbf{h}} - \mathbf{E}\|_{\mathbf{X}(\Omega)} = 0.
\end{equation}
\end{theo}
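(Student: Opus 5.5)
We compare the pulled-back problem \eqref{eq:pullback_problem} with the reference problem \eqref{eq:main_formulation} on the fixed space $\mathbf{X}(\Omega)$. The plan is to treat $\widetilde{\mathcal{A}}_{\mathbf{h}}$ as a perturbation of $\mathcal{A}$, set $\mathbf{W}_{\mathbf{h}} := \hat{\mathbf{E}}_{\mathbf{h}} - \mathbf{E}$, and close the argument with the linear stability of $\mathscr{A}_l$ from Theorem~\ref{th:existence_nibc}, absorbing the nonlinear part by smallness of $L_g$. Since $\Gamma_R$ and $\mathbf{F}$ are $\mathbf{h}$-independent, subtracting the two variational equations gives, for all $\mathbf{V}\in\mathbf{X}(\Omega)$,
\[
\langle \mathscr{A}_l \mathbf{W}_{\mathbf{h}}, \mathbf{V}\rangle = \int_\Gamma \bigl[\mathbf{g}(\cdot,\gamma_T\hat{\mathbf{E}}_{\mathbf{h}}) - \mathbf{g}(\cdot,\gamma_T\mathbf{E})\bigr]\cdot\gamma_T\overline{\mathbf{V}}\,\mathrm{d}s + R_{\mathbf{h}}(\hat{\mathbf{E}}_{\mathbf{h}};\mathbf{V}),
\]
where $R_{\mathbf{h}}(\hat{\mathbf{E}}_{\mathbf{h}};\cdot) := \mathcal{A}(\hat{\mathbf{E}}_{\mathbf{h}};\cdot) - \widetilde{\mathcal{A}}_{\mathbf{h}}(\hat{\mathbf{E}}_{\mathbf{h}};\cdot)$ collects all geometric discrepancies.

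\textbf{Step 1: uniform a priori bound.} I first show $\sup_{\mathbf{h}}\|\hat{\mathbf{E}}_{\mathbf{h}}\|_{\mathbf{X}(\Omega)}<\infty$ for small $\|\mathbf{h}\|_{C^1}$. By Lemma~\ref{le:positive}, $\mathcal{M}_{\mathbf{h}}-\mathrm{Id}$ and $\mathcal{N}_{\mathbf{h}}-\mathrm{Id}$ are $O(\|\mathbf{h}\|_{C^1})$ in $L^\infty$, and $\omega_{\mathbf{h}}-1$ is as well. The operators $P_{\mathbf{h}}J_\varphi^{-\top}-P$ acting on tangential fields are also $O(\|\mathbf{h}\|_{C^1})$ in $L^\infty(\Gamma)$, because $\widetilde{\mathbf{n}}_{\mathbf{h}}\to\mathbf{n}$ uniformly. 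Hence the linear part $\widetilde{\mathscr{A}}_{l,\mathbf{h}}$ of $\widetilde{\mathcal{A}}_{\mathbf{h}}$ satisfies $\|\widetilde{\mathscr{A}}_{l,\mathbf{h}}-\mathscr{A}_l\|_{\mathcal{L}(\mathbf{X},\mathbf{X}^*)}\le C\|\mathbf{h}\|_{C^1}$. By a Neumann series argument, $\widetilde{\mathscr{A}}_{l,\mathbf{h}}$ is invertible with inverse norm at most $2C_{\rm imp}$. Rerunning the contraction argument of Theorem~\ref{th:existence_nibc} for the pulled-back problem then gives existence and uniqueness of $\hat{\mathbf{E}}_{\mathbf{h}}$, provided $2C_{\rm imp}L_g\sup\|P_{\mathbf{h}}J_\varphi^{-\top}\|^2\sup\|\omega_{\mathbf{h}}\|<1$. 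This is exactly where ``$L_g$ sufficiently small'' enters. The same argument gives a bound on $\|\hat{\mathbf{E}}_{\mathbf{h}}\|_{\mathbf{X}}$ that is uniform in $\mathbf{h}$, using the growth bound in (A1) to control $\mathbf{g}(\varphi_{\mathbf{h}}(\cdot),0)$ in $L^2(\Gamma)$ uniformly.

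\textbf{Step 2: estimate the residual $R_{\mathbf{h}}$.} I split $R_{\mathbf{h}}$ into three parts. The volume terms are bounded by $C\|\mathbf{h}\|_{C^1}\|\hat{\mathbf{E}}_{\mathbf{h}}\|_{\mathbf{X}}\|\mathbf{V}\|_{\mathbf{X}}$. The linear impedance surface terms are bounded in the same way. The nonlinear surface term I decompose as
\[
\mathbf{g}(\varphi_{\mathbf{h}},\gamma_{T,\mathbf{h}}\hat{\mathbf{E}}_{\mathbf{h}})\,\omega_{\mathbf{h}}\,\gamma_{T,\mathbf{h}}\overline{\mathbf{V}} - \mathbf{g}(\cdot,\gamma_T\hat{\mathbf{E}}_{\mathbf{h}})\cdot\gamma_T\overline{\mathbf{V}},
\]
and handle its pieces as follows:
\begin{itemize}
\item The change in the spatial argument, $\mathbf{g}(\varphi_{\mathbf{h}}(x),\boldsymbol z)-\mathbf{g}(x,\boldsymbol z)$, is controlled by the mean value theorem and the gradient bound in (A1). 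This gives $\|\mathbf{h}\|_{C^0}\bigl(\|\psi_1\|_{L^2}+c_1\|\boldsymbol z\|_{L^2}\bigr)$, uniformly thanks to (A4).
\item The change in the trace, from $\gamma_{T,\mathbf{h}}\hat{\mathbf{E}}_{\mathbf{h}}$ to $\gamma_T\hat{\mathbf{E}}_{\mathbf{h}}$, is handled by the Lipschitz property: $L_g\|(\gamma_{T,\mathbf{h}}-\gamma_T)\hat{\mathbf{E}}_{\mathbf{h}}\|_{L^2}\le CL_g\|\mathbf{h}\|_{C^1}\|\hat{\mathbf{E}}_{\mathbf{h}}\|_{\mathbf{X}}$.
\item The weight $\omega_{\mathbf{h}}-1$ and the test-trace change are again $O(\|\mathbf{h}\|_{C^1})$ in $L^\infty(\Gamma)$.
\end{itemize}
Combining these with Step 1 yields $\|R_{\mathbf{h}}(\hat{\mathbf{E}}_{\mathbf{h}};\cdot)\|_{\mathbf{X}^*}\le C\|\mathbf{h}\|_{C^1}$.

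\textbf{Step 3: conclusion.} Applying $\mathscr{A}_l^{-1}$ and \eqref{eq:linear_stability}, and using that the first term on the right is an $L^2_t$ datum of size at most $L_g\|\gamma_T\mathbf{W}_{\mathbf{h}}\|_{L^2}$, gives
\[
\|\mathbf{W}_{\mathbf{h}}\|_{\mathbf{X}} \le C_{\rm imp}L_g\|\mathbf{W}_{\mathbf{h}}\|_{\mathbf{X}} + C\|\mathbf{h}\|_{C^1}.
\]
With $C_{\rm imp}L_g<1$ this yields $\|\hat{\mathbf{E}}_{\mathbf{h}}-\mathbf{E}\|_{\mathbf{X}}\le C(1-C_{\rm imp}L_g)^{-1}\|\mathbf{h}\|_{C^1}\to0$. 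One caveat: $R_{\mathbf{h}}$ must be viewed as a functional on $\mathbf{X}(\Omega)$, not merely as an $L^2(\Gamma)$ datum. So I apply $\|\mathscr{A}_l^{-1}\|_{\mathcal{L}(\mathbf{X}^*,\mathbf{X})}$ to it, which is fine since $\mathscr{A}_l$ is an isomorphism.

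\textbf{Main obstacle.} I expect the main obstacle to be Step 1, namely the uniform bound and well-posedness of the pulled-back nonlinear problem. It needs careful bookkeeping of the $\mathbf{h}$-dependent constants: the factors $\omega_{\mathbf{h}}$ and $P_{\mathbf{h}}J_\varphi^{-\top}$ in front of $\mathbf{g}$, the $\varphi_{\mathbf{h}}$-shift inside $\mathbf{g}$, and the requirement that the Lipschitz assumption holds on the bounded set where the iterates live. I would fix a ball of radius $2\|\mathbf{E}\|_{\mathbf{X}}+1$, show the perturbed fixed-point map sends it into itself for small $\mathbf{h}$, and apply the stated local Lipschitz bound there.
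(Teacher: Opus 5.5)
Your argument is correct, but it is organized differently from the paper's.

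\textbf{The paper's route.} The paper needs no a priori bound on $\hat{\mathbf E}_{\mathbf h}$. It writes $T_{\mathbf h}(\hat{\mathbf E}_{\mathbf h})-T_{\mathbf h}(\mathbf E)=(T-T_{\mathbf h})\mathbf E$, so the geometric consistency residual is evaluated only at the fixed, $\mathbf h$-independent reference solution $\mathbf E$. It then inverts the \emph{perturbed} linear operator $T_{\mathbf h,l}$ by a Neumann series, with bound $2\widetilde C$. Finally it absorbs $T_{\mathbf h,n}(\hat{\mathbf E}_{\mathbf h})-T_{\mathbf h,n}(\mathbf E)$ using the Lipschitz constant of the transported nonlinearity $\boldsymbol\zeta\mapsto\mathbf g(\varphi_{\mathbf h}(\cdot),\boldsymbol\zeta)$, under the condition $2\widetilde C C_\gamma^2L_g<1$.

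\textbf{Your route.} You place the residual at $\hat{\mathbf E}_{\mathbf h}$ and invert the \emph{unperturbed} operator $\mathscr A_l$. As a result, your final absorption is exactly the condition $C_{\rm imp}L_g<1$ of Theorem~\ref{th:existence_nibc}. It also uses \eqref{eq:g_lipschitz_L2} precisely as stated, with tangential arguments $\gamma_T\hat{\mathbf E}_{\mathbf h},\gamma_T\mathbf E\in L_t^2(\Gamma)$. The paper, by contrast, applies the Lipschitz bound to $\gamma_{T,\mathbf h}(\cdot)$, which is tangential to $\Gamma_{\mathbf h}$ rather than to $\Gamma$.

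\textbf{What each buys.} The price of your route is Step~1. The uniform bound on $\|\hat{\mathbf E}_{\mathbf h}\|_{\mathbf X(\Omega)}$ requires re-running the contraction for the pulled-back problem, and there you need smallness of the transported Lipschitz constant anyway (via (A4)). So the hypotheses you actually use are no weaker than the paper's. In exchange, Step~1 proves well-posedness of the pulled-back problem instead of assuming it. The paper's route is shorter because it never leaves the fixed field $\mathbf E$.

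\textbf{Minor points.} Your claim that the inverse has norm at most $2C_{\rm imp}$ is right for the $L_t^2$-data-to-solution map, via the factorization $\widetilde{\mathscr A}_{l,\mathbf h}^{-1}=\bigl(I+\mathscr A_l^{-1}(\widetilde{\mathscr A}_{l,\mathbf h}-\mathscr A_l)\bigr)^{-1}\mathscr A_l^{-1}$. It is not right as an operator norm in $\mathcal L(\mathbf X^*,\mathbf X)$, which is bounded by $\|\mathscr A_l^{-1}\|$ instead. Also, in Step~2 the difference $\mathbf g(\boldsymbol x,\gamma_{T,\mathbf h}\hat{\mathbf E}_{\mathbf h})-\mathbf g(\boldsymbol x,\gamma_T\hat{\mathbf E}_{\mathbf h})$ has a non-tangential argument. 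Cite the pointwise bound $|\mathbf g_{\boldsymbol z}(\boldsymbol x,\boldsymbol z;\boldsymbol w)|\le C_g|\boldsymbol w|$ from (A2) there rather than $L_g$; no smallness is needed at that step.
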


\begin{proof}
We begin by decomposing the variational forms $\mathcal{A}$ and $\widetilde{\mathcal{A}}_{\mathbf{h}}$ into their respective linear and nonlinear components:
\[
\mathcal{A}(\mathbf{E}, \mathbf{V}) = \mathcal{A}_l(\mathbf{E}, \mathbf{V}) + \mathcal{A}_n(\mathbf{E}, \mathbf{V}), \qquad \widetilde{\mathcal{A}}_{\mathbf{h}}(\mathbf{E}, \mathbf{V}) = \widetilde{\mathcal{A}}_{\mathbf{h},l}(\mathbf{E}, \mathbf{V}) + \widetilde{\mathcal{A}}_{\mathbf{h},n}(\mathbf{E}, \mathbf{V}).
\]
The linear component $\mathcal{A}_l$ is defined as
\begin{align*}
\mathcal{A}_l(\mathbf{E}, \mathbf{V}) &:= \int_\Omega \Big[ (\nabla \times \mathbf{E}) \cdot (\nabla \times \overline{\mathbf{V}}) - k^2 \mathbf{E} \cdot \overline{\mathbf{V}} \Big] \mathrm{d}\boldsymbol{x} \\
&\quad + \mathrm{i} k \lambda \int_\Gamma (\gamma_T \mathbf{E}) \cdot (\gamma_T \overline{\mathbf{V}}) \,\mathrm{d}s + \mathrm{i} k \langle \Lambda \gamma_t \mathbf{E}, \gamma_T \overline{\mathbf{V}} \rangle_{\Gamma_R},
\end{align*}
while the nonlinear component is given by
\[
\mathcal{A}_n(\mathbf{E}, \mathbf{V}) := - \int_\Gamma \mathbf{g}(\cdot, \gamma_T \mathbf{E}) \cdot \gamma_T \overline{\mathbf{V}} \,\mathrm{d}s.
\]
The pulled-back forms $\widetilde{\mathcal{A}}_{\mathbf{h},l}$ and $\widetilde{\mathcal{A}}_{\mathbf{h},n}$ are defined analogously via the terms in \eqref{eq:weak_form_pullback_impedance}.

Invoking the Riesz representation theorem on the Hilbert space $\mathbf{X}(\Omega)$, we introduce the bounded linear operators $T_l, T_{\mathbf{h},l} \in \mathcal{L}(\mathbf{X}(\Omega))$ and the nonlinear operators $T_n, T_{\mathbf{h},n} : \mathbf{X}(\Omega) \to \mathbf{X}(\Omega)$ satisfying
\[
(T_l \mathbf{w}, \mathbf{V})_{\mathbf{X}(\Omega)} = \mathcal{A}_l(\mathbf{w}, \mathbf{V}), \qquad (T_{\mathbf{h},l} \mathbf{w}, \mathbf{V})_{\mathbf{X}(\Omega)} = \widetilde{\mathcal{A}}_{\mathbf{h},l}(\mathbf{w}, \mathbf{V}),
\]
with analogous identities for $T_n$ and $T_{\mathbf{h},n}$. Finally, we define the total operators
\[
T := T_l + T_n \quad \text{and} \quad T_{\mathbf{h}} := T_{\mathbf{h},l} + T_{\mathbf{h},n}.
\]

We first establish a perturbation estimate for the operators under a fixed field $\mathbf{w} \in \mathbf{X}(\Omega)$. Let $\mathbf{Z}_{\mathbf{h}} := T(\mathbf{w}) - T_{\mathbf{h}}(\mathbf{w})$. By the properties of the Riesz representatives, we have
\[
\|\mathbf{Z}_{\mathbf{h}}\|_{\mathbf{X}(\Omega)}^2 = \mathcal{A}(\mathbf{w}, \mathbf{Z}_{\mathbf{h}}) - \widetilde{\mathcal{A}}_{\mathbf{h}}(\mathbf{w}, \mathbf{Z}_{\mathbf{h}}),
\]
which, by duality, implies
\[
\|\mathbf{Z}_{\mathbf{h}}\|_{\mathbf{X}(\Omega)} \le \sup_{\|\mathbf{V}\|_{\mathbf{X}(\Omega)}=1} \left| \mathcal{A}(\mathbf{w}, \mathbf{V}) - \widetilde{\mathcal{A}}_{\mathbf{h}}(\mathbf{w}, \mathbf{V}) \right|.
\]
Invoking Lemma~\ref{le:positive} for the linear component, it follows that
\[
\|(T_l - T_{\mathbf{h},l})\mathbf{w}\|_{\mathbf{X}(\Omega)} \le C \|\mathbf{h}\|_{C^1} \|\mathbf{w}\|_{\mathbf{X}(\Omega)}.
\]
For the nonlinear component evaluated at the solution $\mathbf{E}$, the assumptions on $\mathbf{g}$ yield
\[
\|(T_n - T_{\mathbf{h},n})\mathbf{E}\|_{\mathbf{X}(\Omega)} \le C_2 \|\mathbf{h}\|_{C^1} \left( \|\boldsymbol{\psi}_0\|_{L^2(\Gamma)} + \|\boldsymbol{\psi}_1\|_{L^2(\Gamma)} + \|\mathbf{E}\|_{\mathbf{X}(\Omega)} \right).
\]
Consequently, for a fixed $\mathbf{E}$, we obtain the estimate
\begin{equation}
\label{eq:operator_perturbation_impedance}
\|(T - T_{\mathbf{h}})\mathbf{E}\|_{\mathbf{X}(\Omega)} \le C_E \|\mathbf{h}\|_{C^1}.
\end{equation}

By the standard well-posedness theory for the corresponding linear Maxwell
problem, the linear operator \(T_l\) associated with the linear part of the
fixed-domain formulation is boundedly invertible on \(\mathbf X(\Omega)\).Hence \(T_l^{-1}\) exists and is bounded. We denote
\[
\widetilde C:=\|T_l^{-1}\|_{\mathcal L(\mathbf X(\Omega))}.
\]
Moreover, by Lemma~\ref{le:positive}, for
\(\|\mathbf h\|_{C^1}\) sufficiently small, we have
\[
\|T_l^{-1}(T_{\mathbf h,l}-T_l)\|_{\mathcal L(\mathbf X(\Omega))}\le
\frac12 .
\]
Since
$T_{\mathbf h,l}=T_l\left(I+T_l^{-1}(T_{\mathbf h,l}-T_l)
\right),$
a standard Neumann series argument implies that \(T_{\mathbf h,l}\) is
invertible and satisfies the uniform stability estimate
\[
\|T_{\mathbf h,l}^{-1}\|_{\mathcal L(\mathbf X(\Omega))}
\le\frac{\|T_l^{-1}\|_{\mathcal L(\mathbf X(\Omega))}
}{1-\|T_l^{-1}(T_{\mathbf h,l}-T_l)\|_{\mathcal L(\mathbf X(\Omega))}
}\le2\widetilde C .\]

Consider now the difference $\hat{\mathbf E}_{\mathbf h} - \mathbf E$. Since $T_{\mathbf h}(\hat{\mathbf E}_{\mathbf h}) = T(\mathbf E) =  F$, it follows that
\[
T_{\mathbf h}(\hat{\mathbf E}_{\mathbf h}) - T_{\mathbf h}(\mathbf E) = T(\mathbf E) - T_{\mathbf h}(\mathbf E).
\]
Decomposing the operators into their linear and nonlinear components, we obtain
\[
T_{\mathbf h,l}(\hat{\mathbf E}_{\mathbf h} - \mathbf E) + \big[ T_{\mathbf h,n}(\hat{\mathbf E}_{\mathbf h}) - T_{\mathbf h,n}(\mathbf E) \big] = (T_l - T_{\mathbf h,l})\mathbf E + (T_n - T_{\mathbf h,n})\mathbf E.
\]
By applying the inverse $T_{\mathbf h,l}^{-1}$, whose existence and uniform boundedness were established above, we find
\begin{equation}\label{eq:difference_identity}
\hat{\mathbf E}_{\mathbf h} - \mathbf E + T_{\mathbf h,l}^{-1} \big[ T_{\mathbf h,n}(\hat{\mathbf E}_{\mathbf h}) - T_{\mathbf h,n}(\mathbf E) \big] = T_{\mathbf h,l}^{-1} \big[ (T_l - T_{\mathbf h,l})\mathbf E + (T_n - T_{\mathbf h,n})\mathbf E \big].
\end{equation}

We first estimate the right-hand side of \eqref{eq:difference_identity}. Invoking the perturbation estimate \eqref{eq:operator_perturbation_impedance} and the uniform stability bound for $T_{\mathbf h,l}^{-1}$, it follows that
\[
\left\| T_{\mathbf h,l}^{-1} \big[ (T_l - T_{\mathbf h,l})\mathbf E + (T_n - T_{\mathbf h,n})\mathbf E \big] \right\|_{\mathbf X(\Omega)} \le 2\widetilde{C} C_E \|\mathbf h\|_{C^1}.
\]
For the left-hand side of \eqref{eq:difference_identity}, the reverse triangle inequality yields the lower bound
\[
\begin{aligned}
&\left\| \hat{\mathbf E}_{\mathbf h} - \mathbf E + T_{\mathbf h,l}^{-1} \big[ T_{\mathbf h,n}(\hat{\mathbf E}_{\mathbf h}) - T_{\mathbf h,n}(\mathbf E) \big] \right\|_{\mathbf X(\Omega)} \\
&\quad \ge \|\hat{\mathbf E}_{\mathbf h} - \mathbf E\|_{\mathbf X(\Omega)} - \left\| T_{\mathbf h,l}^{-1} \big[ T_{\mathbf h,n}(\hat{\mathbf E}_{\mathbf h}) - T_{\mathbf h,n}(\mathbf E) \big] \right\|_{\mathbf X(\Omega)}.
\end{aligned}
\]

To bound the nonlinear difference, we invoke the definition of $T_{\mathbf{h},n}$ and the transformation properties of the tangential trace and surface measure to write
\[
\left\| T_{\mathbf{h},n}(\hat{\mathbf{E}}_{\mathbf{h}}) - T_{\mathbf{h},n}(\mathbf{E}) \right\|_{\mathbf{X}(\Omega)} = \sup_{\|\mathbf{V}\|_{\mathbf{X}(\Omega)}=1} \left| \widetilde{\mathcal{A}}_{\mathbf{h},n}(\hat{\mathbf{E}}_{\mathbf{h}}, \mathbf{V}) - \widetilde{\mathcal{A}}_{\mathbf{h},n}(\mathbf{E}, \mathbf{V}) \right|.
\]
From the definition of the pulled-back form \eqref{eq:weak_form_pullback_impedance}, we have
\[
\begin{aligned}
&\widetilde{\mathcal{A}}_{\mathbf{h},n}(\hat{\mathbf{E}}_{\mathbf{h}}, \mathbf{V}) - \widetilde{\mathcal{A}}_{\mathbf{h},n}(\mathbf{E}, \mathbf{V}) \\
&\quad = - \int_\Gamma \Big[ \mathbf{g}(\varphi_{\mathbf{h}}(\cdot), \gamma_{T,\mathbf{h}} \hat{\mathbf{E}}_{\mathbf{h}}) - \mathbf{g}(\varphi_{\mathbf{h}}(\cdot), \gamma_{T,\mathbf{h}} \mathbf{E}) \Big] \cdot \gamma_{T,\mathbf{h}} \overline{\mathbf{V}} \, \omega_{\mathbf{h}} \, \mathrm{d}s.
\end{aligned}
\]
By leveraging the uniform $L^2$-Lipschitz continuity of $\mathbf{g}$, the stability of the transported trace operator, and the uniform boundedness of the surface Jacobian $\omega_{\mathbf{h}}$, we obtain
\[
\begin{aligned}
\left\| \mathbf{g}(\varphi_{\mathbf{h}}(\cdot), \gamma_{T,\mathbf{h}} \hat{\mathbf{E}}_{\mathbf{h}}) - \mathbf{g}(\varphi_{\mathbf{h}}(\cdot), \gamma_{T,\mathbf{h}} \mathbf{E}) \right\|_{L^2(\Gamma)} &\le L_g \|\gamma_{T,\mathbf{h}} (\hat{\mathbf{E}}_{\mathbf{h}} - \mathbf{E})\|_{L^2(\Gamma)} \\
&\le L_g C_\gamma \|\hat{\mathbf{E}}_{\mathbf{h}} - \mathbf{E}\|_{\mathbf{X}(\Omega)}.
\end{aligned}
\]
Furthermore, using the trace estimate $\|\gamma_{T,\mathbf{h}} \mathbf{V}\|_{L^2(\Gamma)} \le C_\gamma \|\mathbf{V}\|_{\mathbf{X}(\Omega)}$, it follows that
\[
\|T_{\mathbf{h},n}(\hat{\mathbf{E}}_{\mathbf{h}}) - T_{\mathbf{h},n}(\mathbf{E})\|_{\mathbf{X}(\Omega)} \le C_\gamma^2 L_g \|\hat{\mathbf{E}}_{\mathbf{h}} - \mathbf{E}\|_{\mathbf{X}(\Omega)}.
\]
Consequently, the nonlinear term on the left-hand side of \eqref{eq:difference_identity} satisfies
\[
\left\| T_{\mathbf{h},l}^{-1} \big[ T_{\mathbf{h},n}(\hat{\mathbf{E}}_{\mathbf{h}}) - T_{\mathbf{h},n}(\mathbf{E}) \big] \right\|_{\mathbf{X}(\Omega)} \le 2\widetilde{C} C_\gamma^2 L_g \|\hat{\mathbf{E}}_{\mathbf{h}} - \mathbf{E}\|_{\mathbf{X}(\Omega)}.
\]

Combining the preceding estimates yields
\[
\left( 1 - 2\widetilde{C} C_\gamma^2 L_g \right) \|\hat{\mathbf{E}}_{\mathbf{h}} - \mathbf{E}\|_{\mathbf{X}(\Omega)} \le C_3 \|\mathbf{h}\|_{C^1}.
\]
Under the assumption that $L_g$ is sufficiently small such that $2\widetilde{C} C_\gamma^2 L_g < 1$, we deduce
\[
\|\hat{\mathbf{E}}_{\mathbf{h}} - \mathbf{E}\|_{\mathbf{X}(\Omega)} \le \frac{C_3}{1 - 2\widetilde{C} C_\gamma^2 L_g} \|\mathbf{h}\|_{C^1}.
\]
Finally, observing the continuous embedding $\mathbf{X}(\Omega) \hookrightarrow H(\operatorname{curl}; \Omega)$, there exists a constant $C > 0$ such that
\[
\|\hat{\mathbf{E}}_{\mathbf{h}} - \mathbf{E}\|_{H(\operatorname{curl}; \Omega)} \le \frac{C C_3}{1 - 2\widetilde{C} C_\gamma^2 L_g} \|\mathbf{h}\|_{C^1}.
\]
Taking the limit $\|\mathbf{h}\|_{C^1} \to 0$ yields the desired result \eqref{eq:continuity_limit}, completing the proof.

\end{proof}

The linearized Maxwell problem corresponding to the nonlinear impedance condition is formulated as the following $\mathbb{R}$-linear variational problem: Given a functional $\mathbf{F} \in \mathbf{X}(\Omega)^*$, find $\mathbf{W} \in \mathbf{X}(\Omega)$ such that
\begin{equation}
\widehat{\mathcal{A}}(\mathbf{W}, \mathbf{V}) = \langle F, \mathbf{V} \rangle \qquad \forall \, \mathbf{V} \in \mathbf{X}(\Omega),
\end{equation}
where the form $\widehat{\mathcal{A}}$ is defined by
\begin{equation}
\label{eq:linearized_weak_form}
\begin{aligned}
\widehat{\mathcal{A}}(\mathbf{W}, \mathbf{V}) &:= \int_{\Omega} \left[ (\nabla \times \mathbf{W}) \cdot (\nabla \times \overline{\mathbf{V}}) - k^2 \mathbf{W} \cdot \overline{\mathbf{V}} \right] \, \mathrm{d}\boldsymbol{x} \\
&\quad + \mathrm{i}k \lambda\int_{\Gamma}  \, (\gamma_T \mathbf{W}) \cdot (\gamma_T \overline{\mathbf{V}}) \, \mathrm{d}s - \int_{\Gamma} \mathbf{g}_{\boldsymbol{z}}(\cdot, \gamma_T \mathbf{E}; \gamma_T \mathbf{W}) \cdot \gamma_T \overline{\mathbf{V}} \, \mathrm{d}s \\
&\quad + \mathrm{i}k \langle \Lambda \gamma_t \mathbf{W}, \gamma_T\overline{\mathbf{V}} \rangle_{\Gamma_R}.
\end{aligned}
\end{equation}
 Here \(\mathbf E\) is the weak solution of the nonlinear impedance problem at
the reference configuration; thus \(\widehat{\mathcal A}\) is the
\(\mathbb R\)-linearization of the nonlinear variational form at
\(\mathbf E\). Henceforth, we assume that this linearized problem is well
posed in \(\mathbf X(\Omega)\), in the sense that the operator induced by
\(\widehat{\mathcal A}\) is a bounded \(\mathbb R\)-linear isomorphism from
\(\mathbf X(\Omega)\) onto \(\mathbf X(\Omega)^*\).
\begin{theo}\label{th:symmetrisation1}
Assume that the nonlinear impedance problem \eqref{eq:main_formulation} and its associated $\mathbb{R}$-linearized counterpart \eqref{eq:linearized_weak_form} are well-posed in the trace-regular framework $\mathbf{X}(\Omega)$. Let $\mathbf{E} \in \mathbf{X}(\Omega)$ be the weak solution to \eqref{eq:main_formulation}. Suppose that $\mathbf{g}$ satisfies Assumption~\ref{ass:enhanced_g} and that the solution map $\mathbf{h} \mapsto \hat{\mathbf{E}}_{\mathbf{h}}$ is continuous at $\mathbf{h} = \mathbf{0}$ in the sense of Theorem~\ref{th:continuity_domain_dependence}.

Then, the electric field is Fr\'{e}chet differentiable with respect to the domain perturbation $\mathbf{h} \in C_c^1(B_R; \mathbb{R}^3)$ at $\mathbf{h} = \mathbf{0}$. Specifically, there exists a material derivative $\mathbf{W} \in \mathbf{X}(\Omega)$, depending linearly on $\mathbf{h}$, such that
\begin{equation*}
\lim_{\|\mathbf{h}\|_{C^1} \to 0} \frac{\|\hat{\mathbf{E}}_{\mathbf{h}} - \mathbf{E} - \mathbf{W}\|_{\mathbf{X}(\Omega)}}{\|\mathbf{h}\|_{C^1}} = 0.
\end{equation*}
The material derivative $\mathbf{W}$ is the unique weak solution to the $\mathbb{R}$-linear variational problem
\[
\widehat{\mathcal{A}}(\mathbf{W}, \mathbf{V}) = \mathcal{L}_{\mathbf{h}}(\mathbf{V}) \qquad \forall \, \mathbf{V} \in \mathbf{X}(\Omega),
\]
where the right-hand side functional $\mathcal{L}_{\mathbf{h}} \in \mathbf{X}(\Omega)^*$ is defined by
\begin{equation}\label{eq:material_derivative}
\begin{aligned}
\mathcal L_{\mathbf h}(\mathbf V)
&=
\int_{\Omega}
\Bigl[
(\nabla\times \mathbf E)^{\top}
\bigl(
(\operatorname{div}\mathbf h)I
-
J_{\mathbf h}
-
J_{\mathbf h}^{\top}
\bigr)
\nabla\times\overline{\mathbf V}
\\
&\qquad
-
k^2\mathbf E^{\top}
\bigl(
-(\operatorname{div}\mathbf h)I
+
J_{\mathbf h}
+
J_{\mathbf h}^{\top}
\bigr)
\overline{\mathbf V}
\Bigr]\,d\mathbf x
\\
&\quad
-ik\lambda
\int_{\Gamma}
\Bigl[\dot\gamma_{T,\mathbf h}\mathbf E\cdot\gamma_T\overline{\mathbf V}+
\gamma_T\mathbf E\cdot\dot\gamma_{T,\mathbf h}\overline{\mathbf V}+
\dot\omega_{\mathbf h}\,
\gamma_T\mathbf E\cdot\gamma_T\overline{\mathbf V}
\Bigr]\,ds\\&\quad+
\int_{\Gamma}
\Bigl[
\nabla_{\mathbf x}\mathbf g(\cdot,\gamma_T\mathbf E)\mathbf h+
\mathbf g_{\mathbf z}
\bigl(\cdot,\gamma_T\mathbf E;
\dot\gamma_{T,\mathbf h}\mathbf E\bigr)\Bigr]\cdot
\gamma_T\overline{\mathbf V}\,ds
\\
&\quad+\int_{\Gamma}
\mathbf g(\cdot,\gamma_T\mathbf E)
\cdot
\dot\gamma_{T,\mathbf h}\overline{\mathbf V}\,ds\\
&\quad+
\int_{\Gamma}\dot\omega_{\mathbf h}\,
\mathbf g(\cdot,\gamma_T\mathbf E)\cdot
\gamma_T\overline{\mathbf V}\,ds .
\end{aligned}
\end{equation}

\end{theo}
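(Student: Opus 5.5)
We prove Fréchet differentiability by the standard implicit-function-type argument: subtract the reference and pulled-back variational equations, isolate the part that is linear in $\hat{\mathbf E}_{\mathbf h}-\mathbf E$ via the $\mathbb R$-linearization $\widehat{\mathcal A}$, and show that the remainder is $o(\|\mathbf h\|_{C^1})$ in $\mathbf X(\Omega)^*$. Throughout, $\mathbf W$ denotes the unique solution of $\widehat{\mathcal A}(\mathbf W,\cdot)=\mathcal L_{\mathbf h}$. This solution exists by the assumed well-posedness of the linearized problem once we know $\mathcal L_{\mathbf h}\in\mathbf X(\Omega)^*$. Boundedness of $\mathcal L_{\mathbf h}$, with $\|\mathcal L_{\mathbf h}\|\le C\|\mathbf h\|_{C^1}$, follows from Lemma~\ref{le:positive}, from the bounds (A1)--(A2) on $\mathbf g$, $\nabla_{\boldsymbol x}\mathbf g$ and $\mathbf g_{\boldsymbol z}$, and from the fact that $\dot\gamma_{T,\mathbf h}$ and $\dot\omega_{\mathbf h}$ are $L^\infty(\Gamma)$-multipliers of size $O(\|\mathbf h\|_{C^1})$. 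Since $\mathcal L_{\mathbf h}$ is linear in $\mathbf h$, so is $\mathbf W$. Here $\dot\gamma_{T,\mathbf h}$ is the first-order term of $P_{\mathbf h}J_\varphi^{-\top}$, namely $-P J_{\mathbf h}^\top-(\delta\mathbf n\otimes\mathbf n+\mathbf n\otimes\delta\mathbf n)$ acting on $\gamma_T\mathbf U$, with $\delta\mathbf n$ given by \eqref{eq:normal_shape_derivative}.

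\emph{Step 1 (expansion of the form at fixed argument).} We show
\[
\sup_{\|\mathbf V\|_{\mathbf X(\Omega)}=1}\bigl|\widetilde{\mathcal A}_{\mathbf h}(\mathbf E,\mathbf V)-\mathcal A(\mathbf E,\mathbf V)+\mathcal L_{\mathbf h}(\mathbf V)\bigr|=o(\|\mathbf h\|_{C^1}).
\]
The bulk terms follow directly from Lemma~\ref{le:positive}, since $\mathcal M_{\mathbf h}-I$ and $\mathcal N_{\mathbf h}-I$ are expanded uniformly in $L^\infty$. The linear impedance term is a product of three factors: $\gamma_{T,\mathbf h}\mathbf E$, $\gamma_{T,\mathbf h}\overline{\mathbf V}$ and $\omega_{\mathbf h}$, each equal to its reference value plus a first-order term plus an $L^\infty$-small remainder. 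Multiplying out gives the three $\lambda$-terms in $\mathcal L_{\mathbf h}$, and all cross terms are $O(\|\mathbf h\|^2)$. For the nonlinear term we write
\[
\mathbf g(\varphi_{\mathbf h},\gamma_{T,\mathbf h}\mathbf E)-\mathbf g(\cdot,\gamma_T\mathbf E)=\bigl[\mathbf g(\varphi_{\mathbf h},\gamma_{T,\mathbf h}\mathbf E)-\mathbf g(\cdot,\gamma_{T,\mathbf h}\mathbf E)\bigr]+\bigl[\mathbf g(\cdot,\gamma_{T,\mathbf h}\mathbf E)-\mathbf g(\cdot,\gamma_T\mathbf E)\bigr].
\]
The first bracket equals $\int_0^1\nabla_{\boldsymbol x}\mathbf g(\cdot+t\mathbf h,\gamma_{T,\mathbf h}\mathbf E)\,dt\,\mathbf h$. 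We replace it by $\nabla_{\boldsymbol x}\mathbf g(\cdot,\gamma_T\mathbf E)\mathbf h$ up to $o(\|\mathbf h\|)$ using continuity of $\nabla_{\boldsymbol x}\mathbf g$, the $\mathbf g_{\boldsymbol x\boldsymbol z}$ bound in (A2), (A4), and dominated convergence with the majorant from (A1). The second bracket is treated by (A3) with $\boldsymbol\eta=(\gamma_{T,\mathbf h}-\gamma_T)\mathbf E$, where $\|\boldsymbol\eta-\dot\gamma_{T,\mathbf h}\mathbf E\|_{L^2}=o(\|\mathbf h\|)$. Together with the expansions of $\gamma_{T,\mathbf h}\overline{\mathbf V}$ and $\omega_{\mathbf h}$, this produces exactly the $\mathbf g$-terms of $\mathcal L_{\mathbf h}$.

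\emph{Step 2 (linearization in the field).} Set $\boldsymbol\delta_{\mathbf h}:=\hat{\mathbf E}_{\mathbf h}-\mathbf E$. Since $\widetilde{\mathcal A}_{\mathbf h}(\hat{\mathbf E}_{\mathbf h},\mathbf V)=\mathcal A(\mathbf E,\mathbf V)=\langle F,\mathbf V\rangle$, we have
\[
0=\bigl[\widetilde{\mathcal A}_{\mathbf h}(\hat{\mathbf E}_{\mathbf h},\mathbf V)-\widetilde{\mathcal A}_{\mathbf h}(\mathbf E,\mathbf V)\bigr]+\bigl[\widetilde{\mathcal A}_{\mathbf h}(\mathbf E,\mathbf V)-\mathcal A(\mathbf E,\mathbf V)\bigr].
\]
We claim the first bracket equals $\widehat{\mathcal A}(\boldsymbol\delta_{\mathbf h},\mathbf V)+r_{\mathbf h}(\mathbf V)$ with $\|r_{\mathbf h}\|_{\mathbf X^*}=o(\|\boldsymbol\delta_{\mathbf h}\|)+O(\|\mathbf h\|\,\|\boldsymbol\delta_{\mathbf h}\|)$. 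The linear parts differ from those of $\widehat{\mathcal A}$ by $O(\|\mathbf h\|)\|\boldsymbol\delta_{\mathbf h}\|$ by Lemma~\ref{le:positive}. The nonlinear part uses (A3) at $\boldsymbol\zeta=\gamma_{T,\mathbf h}\mathbf E$ with increment $\gamma_{T,\mathbf h}\boldsymbol\delta_{\mathbf h}$, uniform for $\boldsymbol\zeta$ in bounded sets. Then we use the bound $C_g$ in (A2) to move from $\mathbf g_{\boldsymbol z}(\varphi_{\mathbf h},\gamma_{T,\mathbf h}\mathbf E;\cdot)$ to $\mathbf g_{\boldsymbol z}(\cdot,\gamma_T\mathbf E;\cdot)$; this needs continuity of $\mathbf g_{\boldsymbol z}$ in its base point, which I would extract from (A2)--(A4) via dominated convergence. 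Combining with Step 1, we obtain
\[
\widehat{\mathcal A}(\boldsymbol\delta_{\mathbf h}-\mathbf W,\mathbf V)=-r_{\mathbf h}(\mathbf V)+o(\|\mathbf h\|)\|\mathbf V\|.
\]

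\emph{Step 3 (conclusion).} Inverting $\widehat{\mathcal A}$ gives $\|\boldsymbol\delta_{\mathbf h}-\mathbf W\|\le C\bigl(o(\|\boldsymbol\delta_{\mathbf h}\|)+\|\mathbf h\|\|\boldsymbol\delta_{\mathbf h}\|+o(\|\mathbf h\|)\bigr)$. The proof of Theorem~\ref{th:continuity_domain_dependence} actually yields the Lipschitz bound $\|\boldsymbol\delta_{\mathbf h}\|\le C\|\mathbf h\|_{C^1}$, so every term is $o(\|\mathbf h\|)$; continuity alone is what guarantees that the (A3) remainder is small. The main obstacle is Step 2's nonlinear remainder, which is the point where the $\mathbb R$-linear Fréchet expansion of the Nemytskii operator is applied along the moving base point $\gamma_{T,\mathbf h}\mathbf E$ at the moving location $\varphi_{\mathbf h}$. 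This is exactly where the uniformity requirements in (A3)--(A4) are essential. I would first try to handle it by a mean-value/dominated-convergence argument in $L^2(\Gamma)$. If that fails, I would strengthen (A4) to continuity of $\mathbf g_{\boldsymbol z}$ in the operator norm.
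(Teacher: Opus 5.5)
Your proposal follows the paper's proof. You subtract the pulled-back and reference equations, show that $\widehat{\mathcal A}(\hat{\mathbf E}_{\mathbf h}-\mathbf E-\mathbf W,\cdot)$ is a residual of size $o(\|\mathbf h\|_{C^1})$ in $\mathbf X(\Omega)^*$, and invert $\widehat{\mathcal A}$ using $\|\hat{\mathbf E}_{\mathbf h}-\mathbf E\|_{\mathbf X(\Omega)}=O(\|\mathbf h\|_{C^1})$ from the proof of Theorem~\ref{th:continuity_domain_dependence}. Your signs for $\mathcal L_{\mathbf h}$ and your formula for $\dot\gamma_{T,\mathbf h}$ are consistent with the paper's.

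The one organizational difference is that you linearize in two stages: first at the fixed argument $\mathbf E$, then in $\boldsymbol\delta_{\mathbf h}$ for the \emph{perturbed} form. The paper instead expands $\mathbf g(\varphi_{\mathbf h}(\cdot),\gamma_{T,\mathbf h}\hat{\mathbf E}_{\mathbf h})$ in one go about the fixed pair $(\cdot,\gamma_T\mathbf E)$. Your split is what creates the obstacle you flag, and dominated convergence does not close it. Replacing $\mathbf g_{\boldsymbol z}(\varphi_{\mathbf h},\gamma_{T,\mathbf h}\mathbf E;\cdot)$ by $\mathbf g_{\boldsymbol z}(\cdot,\gamma_T\mathbf E;\cdot)$ on the $\mathbf h$-dependent direction $\gamma_T\boldsymbol\delta_{\mathbf h}$, with the error you claim, needs these pointwise $\mathbb R$-linear maps to converge in $L^\infty(\Gamma)$, i.e. in operator norm on $L^2$. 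Dominated convergence only gives convergence on each fixed direction, and $\gamma_T\boldsymbol\delta_{\mathbf h}/\|\mathbf h\|_{C^1}$ has no compactness.

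No strengthening of (A4) is needed. Set $\boldsymbol\zeta_1=\gamma_{T,\mathbf h}\hat{\mathbf E}_{\mathbf h}$ and $\boldsymbol\zeta_2=\gamma_{T,\mathbf h}\mathbf E$, and first remove the location shift:
\[
\mathbf g(\varphi_{\mathbf h},\boldsymbol\zeta_1)-\mathbf g(\varphi_{\mathbf h},\boldsymbol\zeta_2)
=\mathbf g(\cdot,\boldsymbol\zeta_1)-\mathbf g(\cdot,\boldsymbol\zeta_2)
+\int_0^1\bigl[\nabla_{\boldsymbol x}\mathbf g(\cdot+t\mathbf h,\boldsymbol\zeta_1)-\nabla_{\boldsymbol x}\mathbf g(\cdot+t\mathbf h,\boldsymbol\zeta_2)\bigr]\,dt\,\mathbf h .
\]
By (A2) and the mean value inequality, the bound on $\mathbf g_{\boldsymbol x\boldsymbol z}$ makes $\nabla_{\boldsymbol x}\mathbf g(\boldsymbol y,\cdot)$ Lipschitz with constant $C_g$. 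Hence the integral is $O(\|\mathbf h\|_{C^1}\|\boldsymbol\delta_{\mathbf h}\|_{\mathbf X(\Omega)})$ in $L^2(\Gamma)$.

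Next, expand $\mathbf g(\cdot,\boldsymbol\zeta_1)$ and $\mathbf g(\cdot,\boldsymbol\zeta_2)$ by (A3) about the \emph{fixed} base point $\gamma_T\mathbf E$; both increments are $O(\|\mathbf h\|_{C^1})$. Subtracting gives $\mathbf g_{\boldsymbol z}(\cdot,\gamma_T\mathbf E;\gamma_{T,\mathbf h}\boldsymbol\delta_{\mathbf h})+o(\|\mathbf h\|_{C^1})$. The $C_g$ bound then trades $\gamma_{T,\mathbf h}\boldsymbol\delta_{\mathbf h}$ for $\gamma_T\boldsymbol\delta_{\mathbf h}$ at cost $O(\|\mathbf h\|_{C^1}\|\boldsymbol\delta_{\mathbf h}\|)$.

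Equivalently, compare the two (A3) expansions at $\gamma_T\mathbf E$ and at $\gamma_{T,\mathbf h}\mathbf E$, using the uniformity in bounded sets. This yields $\|\mathbf g_{\boldsymbol z}(\cdot,\gamma_{T,\mathbf h}\mathbf E;\boldsymbol\eta)-\mathbf g_{\boldsymbol z}(\cdot,\gamma_T\mathbf E;\boldsymbol\eta)\|_{L^2(\Gamma)}=o(\|\mathbf h\|_{C^1})$ whenever $\|\boldsymbol\eta\|_{L^2(\Gamma)}=O(\|\mathbf h\|_{C^1})$, which is all your Step 2 needs.

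With this change your Step 2 remainder becomes $o(\|\mathbf h\|_{C^1})+O(\|\mathbf h\|_{C^1}\|\boldsymbol\delta_{\mathbf h}\|)$ instead of $o(\|\boldsymbol\delta_{\mathbf h}\|)$. That is harmless, and Steps 1 through 3 go through under the stated hypotheses.

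You also do not need the Lipschitz bound imported from the earlier proof. Since $\|\mathbf W\|\le C\|\mathbf h\|_{C^1}$, your Step 3 inequality combined with mere continuity, $\|\boldsymbol\delta_{\mathbf h}\|\to0$, already forces $\|\boldsymbol\delta_{\mathbf h}\|=O(\|\mathbf h\|_{C^1})$.
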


\begin{proof}
We define the increment 
\[
\Delta_{\mathbf{h}} := \hat{\mathbf{E}}_{\mathbf{h}} - \mathbf{E}.
\]
The stability estimate established in the proof of Theorem~\ref{th:continuity_domain_dependence} ensures that, within the trace-regular framework, 
\[
\|\Delta_{\mathbf{h}}\|_{\mathbf{X}(\Omega)} = O(\|\mathbf{h}\|_{C^1}).
\]
By construction, the pulled-back perturbed field $\hat{\mathbf{E}}_{\mathbf{h}}$ and the reference field $\mathbf{E}$ satisfy the respective variational problems
\[
\widetilde{\mathcal{A}}_{\mathbf{h}}(\hat{\mathbf{E}}_{\mathbf{h}}, \mathbf{V}) = \langle {F}, \mathbf{V} \rangle \quad \text{and} \quad \mathcal{A}(\mathbf{E}, \mathbf{V}) = \langle {F}, \mathbf{V} \rangle,
\]
which together yield the fundamental consistency identity
\begin{equation}\label{eq:material_basic_identity}
\widetilde{\mathcal{A}}_{\mathbf{h}}(\hat{\mathbf{E}}_{\mathbf{h}}, \mathbf{V}) - \mathcal{A}(\mathbf{E}, \mathbf{V}) = 0 \qquad \forall \, \mathbf{V} \in \mathbf{X}(\Omega).
\end{equation}

We now perform an asymptotic expansion of the left-hand side of \eqref{eq:material_basic_identity}. According to Lemma~\ref{le:positive}, the geometric coefficients admit the first-order expansions
\[
\mathcal{M}_{\mathbf{h}} = \mathrm{Id} - (\operatorname{div} \mathbf{h})\mathrm{Id} + J_{\mathbf{h}} + J_{\mathbf{h}}^{\top} + o(\|\mathbf{h}\|_{C^1}), \quad \mathcal{N}_{\mathbf{h}} = \mathrm{Id} + (\operatorname{div} \mathbf{h})\mathrm{Id} - J_{\mathbf{h}} - J_{\mathbf{h}}^{\top} + o(\|\mathbf{h}\|_{C^1}).
\]
Throughout this section, we utilize the notation $\dot{(\,\cdot\,)}$ to denote the first-order  variation in the direction of the perturbation field $\mathbf{h}$. Specifically, the surface Jacobian and the transported tangential trace expand as
\[
\omega_{\mathbf{h}} = 1 + \dot{\omega}_{\mathbf{h}} + o(\|\mathbf{h}\|_{C^1}), \qquad \gamma_{T,\mathbf{h}}\mathbf{U} = \gamma_T\mathbf{U} + \dot{\gamma}_{T,\mathbf{h}}\mathbf{U} + o(\|\mathbf{h}\|_{C^1}) \quad \text{in } L^2(\Gamma),
\]
This operator satisfies the stability estimate $\|\dot{\gamma}_{T,\mathbf{h}}\mathbf{U}\|_{L^2(\Gamma)} \le C \|\mathbf{h}\|_{C^1} \|\mathbf{U}\|_{\mathbf{X}(\Omega)}$, which holds uniformly over the class of fields considered herein. Regarding the nonlinear term, Assumption~\ref{ass:enhanced_g} provides the Nemytskii expansion
\[
\mathbf{g}\bigl( \varphi_{\mathbf{h}}(\cdot), \gamma_{T,\mathbf{h}}\hat{\mathbf{E}}_{\mathbf{h}} \bigr) = \mathbf{g}(\cdot, \gamma_T\mathbf{E}) + \mathbf{g}_{\boldsymbol{z}}(\cdot, \gamma_T\mathbf{E}; \gamma_T\Delta_{\mathbf{h}}) + \nabla_{\boldsymbol{x}}\mathbf{g}(\cdot, \gamma_T\mathbf{E})\mathbf{h} + \mathbf{g}_{\boldsymbol{z}}(\cdot, \gamma_T\mathbf{E}; \dot{\gamma}_{T,\mathbf{h}}\mathbf{E}) + \rho_{\mathbf{h}}^{g},
\]
where $\|\rho_{\mathbf{h}}^{g}\|_{L^2(\Gamma)} = o(\|\mathbf{h}\|_{C^1})$. Note that we have utilized the fact that $\|\Delta_{\mathbf{h}}\|_{\mathbf{X}(\Omega)} = O(\|\mathbf{h}\|_{C^1})$ to justify the order of the remainder.

We define the remainder $\mathbf{R}_{\mathbf{h}} := \hat{\mathbf{E}}_{\mathbf{h}} - \mathbf{E} - \mathbf{W}$. By invoking the definition of the linearized form $\widehat{\mathcal{A}}$ and the material derivative $\mathbf{W}$, the identity \eqref{eq:material_basic_identity} leads to the error equation
\[
\widehat{\mathcal{A}}(\mathbf{R}_{\mathbf{h}}, \mathbf{V}) = r_{\mathbf{h}}^{\Omega}(\mathbf{V}) + r_{\mathbf{h}}^{\lambda}(\mathbf{V}) + r_{\mathbf{h}}^{g}(\mathbf{V}),
\]
where the residuals are defined and estimated as follows.

The volume residual is given by
\[
\begin{aligned}
r_{\mathbf{h}}^{\Omega}(\mathbf{V}) &= \int_\Omega (\nabla \times \Delta_{\mathbf{h}})^\top (\mathrm{Id} - \mathcal{M}_{\mathbf{h}}) \nabla \times \overline{\mathbf{V}} \,\mathrm{d}\boldsymbol{x} - k^2 \int_\Omega \Delta_{\mathbf{h}}^\top (\mathrm{Id} - \mathcal{N}_{\mathbf{h}}) \overline{\mathbf{V}} \,\mathrm{d}\boldsymbol{x} \\
&\quad + \int_\Omega (\nabla \times \mathbf{E})^\top \Bigl[ \mathrm{Id} - \mathcal{M}_{\mathbf{h}} - \bigl( (\operatorname{div}\mathbf{h})\mathrm{Id} - J_{\mathbf{h}} - J_{\mathbf{h}}^\top \bigr) \Bigr] \nabla \times \overline{\mathbf{V}} \,\mathrm{d}\boldsymbol{x} \\
&\quad - k^2 \int_\Omega \mathbf{E}^\top \Bigl[ \mathrm{Id} - \mathcal{N}_{\mathbf{h}} + \bigl( (\operatorname{div}\mathbf{h})\mathrm{Id} - J_{\mathbf{h}} - J_{\mathbf{h}}^\top \bigr) \Bigr] \overline{\mathbf{V}} \,\mathrm{d}\boldsymbol{x}.
\end{aligned}
\]
Since $\|\Delta_{\mathbf{h}}\|_{\mathbf{X}(\Omega)} = O(\|\mathbf{h}\|_{C^1})$ and $\|\mathrm{Id} - \mathcal{M}_{\mathbf{h}}\|, \|\mathrm{Id} - \mathcal{N}_{\mathbf{h}}\| = O(\|\mathbf{h}\|_{C^1})$, the first two integrals are of order $O(\|\mathbf{h}\|_{C^1}^2)$. Furthermore, as the bracketed terms are $o(\|\mathbf{h}\|_{C^1})$ by the geometric expansions in Lemma~\ref{le:positive}, it follows that
\[
\|r_{\mathbf{h}}^{\Omega}\|_{\mathbf{X}(\Omega)^*} = o(\|\mathbf{h}\|_{C^1}).
\]

Next, we define the impedance boundary residual as
\[
\begin{aligned}
r_{\mathbf{h}}^{\lambda}(\mathbf{V}) &= \mathrm{i}k \int_\Gamma \lambda \Bigl[ (\gamma_{T,\mathbf{h}}\hat{\mathbf{E}}_{\mathbf{h}}) \cdot (\gamma_{T,\mathbf{h}}\overline{\mathbf{V}}) \omega_{\mathbf{h}} - \gamma_T\mathbf{E}\cdot\gamma_T\overline{\mathbf{V}} - \gamma_T\Delta_{\mathbf{h}}\cdot\gamma_T\overline{\mathbf{V}} \\
&\qquad - \dot{\gamma}_{T,\mathbf{h}}\mathbf{E} \cdot \gamma_T\overline{\mathbf{V}} - \gamma_T\mathbf{E} \cdot \dot{\gamma}_{T,\mathbf{h}}\overline{\mathbf{V}} - (\gamma_T\mathbf{E}\cdot\gamma_T\overline{\mathbf{V}}) \dot{\omega}_{\mathbf{h}} \Bigr] \,\mathrm{d}s.
\end{aligned}
\]
Substituting the $L^2(\Gamma)$-expansions $\gamma_{T,\mathbf{h}}\hat{\mathbf{E}}_{\mathbf{h}} = \gamma_T\mathbf{E} + \gamma_T\Delta_{\mathbf{h}} + \dot{\gamma}_{T,\mathbf{h}}\mathbf{E} + o(\|\mathbf{h}\|_{C^1})$ and $\gamma_{T,\mathbf{h}}\overline{\mathbf{V}} = \gamma_T\overline{\mathbf{V}} + \dot{\gamma}_{T,\mathbf{h}}\overline{\mathbf{V}} + o(\|\mathbf{h}\|_{C^1})$, along with $\omega_{\mathbf{h}} = 1 + \dot{\omega}_{\mathbf{h}} + o(\|\mathbf{h}\|_{C^1})$ in $L^\infty(\Gamma)$, we observe that the zeroth- and first-order terms in $\mathbf{h}$ cancel exactly. Consequently, 
\[
\|r_{\mathbf{h}}^{\lambda}\|_{\mathbf{X}(\Omega)^*} = o(\|\mathbf{h}\|_{C^1}).
\]

Finally, we consider the nonlinear boundary residual
\[
\begin{aligned}
r_{\mathbf{h}}^{g}(\mathbf{V}) &= - \int_\Gamma \Bigl[ \mathbf{g}(\varphi_{\mathbf{h}}(\cdot), \gamma_{T,\mathbf{h}}\hat{\mathbf{E}}_{\mathbf{h}}) \cdot \gamma_{T,\mathbf{h}}\overline{\mathbf{V}} \omega_{\mathbf{h}} - \mathbf{g}(\cdot,\gamma_T\mathbf{E}) \cdot \gamma_T\overline{\mathbf{V}} \\
&\qquad - \mathbf{g}_{\boldsymbol{z}}(\cdot,\gamma_T\mathbf{E};\gamma_T\Delta_{\mathbf{h}}) \cdot \gamma_T\overline{\mathbf{V}} - \Bigl( \nabla_{\boldsymbol{x}}\mathbf{g}(\cdot,\gamma_T\mathbf{E})\mathbf{h} + \mathbf{g}_{\boldsymbol{z}}(\cdot,\gamma_T\mathbf{E}; \dot{\gamma}_{T,\mathbf{h}}\mathbf{E}) \Bigr) \cdot \gamma_T\overline{\mathbf{V}} \\
&\qquad - \mathbf{g}(\cdot,\gamma_T\mathbf{E}) \cdot \dot{\gamma}_{T,\mathbf{h}}\overline{\mathbf{V}} - \mathbf{g}(\cdot,\gamma_T\mathbf{E}) \cdot \gamma_T\overline{\mathbf{V}} \dot{\omega}_{\mathbf{h}} \Bigr] \,\mathrm{d}s.
\end{aligned}
\]
By invoking the $L^2$-Nemytskii Taylor expansion for $\mathbf{g}$ from Assumption~\ref{ass:enhanced_g} and the linearizations of $\gamma_{T,\mathbf{h}}$ and $\omega_{\mathbf{h}}$, it follows that the terms listed above account for all contributions of order up to $O(\|\mathbf{h}\|_{C^1})$. The remainder consists of higher-order terms that are $o(\|\mathbf{h}\|_{C^1})$ in $\mathbf{X}(\Omega)^*$; hence,
\[
\|r_{\mathbf{h}}^{g}\|_{\mathbf{X}(\Omega)^*} = o(\|\mathbf{h}\|_{C^1}).
\]
Combining the estimates for the volume, impedance, and nonlinear boundary residuals, we conclude that
\[
\left\| r_{\mathbf{h}}^{\Omega} + r_{\mathbf{h}}^{\lambda} + r_{\mathbf{h}}^{g} \right\|_{\mathbf{X}(\Omega)^*} = o(\|\mathbf{h}\|_{C^1}).
\]

Let $\mathbf{F}_{\mathbf{h}} \in \mathbf{X}(\Omega)^*$ be the residual functional defined by the sum
\[
\langle {F}_{\mathbf{h}}, \mathbf{V} \rangle := r_{\mathbf{h}}^{\Omega}(\mathbf{V}) + r_{\mathbf{h}}^{\lambda}(\mathbf{V}) + r_{\mathbf{h}}^{g}(\mathbf{V}),
\]
so that the previous estimates imply $\|\mathbf{F}_{\mathbf{h}}\|_{\mathbf{X}(\Omega)^*} = o(\|\mathbf{h}\|_{C^1})$. The remainder $\mathbf{R}_{\mathbf{h}}$ satisfies the variational identity
\[
\widehat{\mathcal{A}}(\mathbf{R}_{\mathbf{h}}, \mathbf{V}) = \langle {F}_{\mathbf{h}}, \mathbf{V} \rangle \qquad \forall \, \mathbf{V} \in \mathbf{X}(\Omega).
\]
By the assumed well-posedness of the $\mathbb{R}$-linearized problem, the operator induced by the form $\widehat{\mathcal{A}}$ is a bounded linear isomorphism. Consequently, there exists a stability constant $C > 0$, independent of $\mathbf{h}$ for sufficiently small $\|\mathbf{h}\|_{C^1}$, such that
\[
\|\mathbf{R}_{\mathbf{h}}\|_{\mathbf{X}(\Omega)} \le C \|{F}_{\mathbf{h}}\|_{\mathbf{X}(\Omega)^*} = o(\|\mathbf{h}\|_{C^1}).
\]
Since$
\mathbf R_{\mathbf h}
=\hat{\mathbf E}_{\mathbf h}-\mathbf E-\mathbf W,
$
we obtain
\[
\lim_{\|\mathbf h\|_{C^1}\to0}
\frac{\|\hat{\mathbf E}_{\mathbf h}-\mathbf E-\mathbf W\|_{\mathbf X(\Omega)}
}{\|\mathbf h\|_{C^1}}=0,
\]
which completes the proof of Theorem~\ref{th:symmetrisation1}.
\end{proof}

\subsection{The nonlinear PEC boundary condition}

We now extend the perturbation framework to the dual formulation of the
NPEC problem. For a given perturbed obstacle \(D_{\mathbf h}\), we introduce
the auxiliary variable
\[
\mathbf Q_{\mathbf h}:=\nabla_{\boldsymbol y}\times\mathbf E_{\mathbf h}.
\]
The perturbed mixed problem is posed in the corresponding product space
\[
\mathbb X_{\rm p}(\Omega_{\mathbf h})
:=
\mathbf X(\Omega_{\mathbf h})
\times
H(\operatorname{curl};\Omega_{\mathbf h}),
\]
where \(\mathbf X(\Omega_{\mathbf h})\) denotes the trace-regular Maxwell
space on the perturbed domain.
Specifically, we seek a pair $(\mathbf{E}_{\mathbf{h}}, \mathbf{Q}_{\mathbf{h}}) \in \mathbb{X}_{\mathrm{p}}(\Omega_{\mathbf{h}})$ such that 
\begin{equation}
\label{eq:perturbed_NPEC_mixed}
\mathcal{A}_{\mathbf{h}}^{\mathrm{p}} \left( (\mathbf{E}_{\mathbf{h}}, \mathbf{Q}_{\mathbf{h}}), (\boldsymbol{\tau}_{\mathbf{h}}, \boldsymbol{\nu}_{\mathbf{h}}) \right) = \mathcal{F}^{\mathrm{p}}(\boldsymbol{\tau}_{\mathbf{h}}, \boldsymbol{\nu}_{\mathbf{h}}) \qquad \forall (\boldsymbol{\tau}_{\mathbf{h}}, \boldsymbol{\nu}_{\mathbf{h}}) \in \mathbb{X}_{\mathrm{p}}(\Omega_{\mathbf{h}}),
\end{equation}
where the nonlinear form $\mathcal{A}_{\mathbf{h}}^{\mathrm{p}}$ is defined by
\begin{equation}
\label{eq:NPEC_form_definition}
\begin{aligned}
\mathcal{A}_{\mathbf{h}}^{\mathrm{p}} \left( (\mathbf{E}_{\mathbf{h}}, \mathbf{Q}_{\mathbf{h}}), (\boldsymbol{\tau}_{\mathbf{h}}, \boldsymbol{\nu}_{\mathbf{h}}) \right) &:= \int_{\Omega_{\mathbf{h}}} \mathbf{Q}_{\mathbf{h}} \cdot \overline{\boldsymbol{\tau}_{\mathbf{h}}} \,\mathrm{d}\boldsymbol{y} - \int_{\Omega_{\mathbf{h}}} \mathbf{E}_{\mathbf{h}} \cdot (\nabla_{\boldsymbol{y}} \times \overline{\boldsymbol{\tau}_{\mathbf{h}}}) \,\mathrm{d}\boldsymbol{y} \\
&\quad + \frac{\mathrm{i}}{k} \left\langle \Lambda^{-1} \gamma_t \mathbf{Q}_{\mathbf{h}}, \gamma_T \boldsymbol{\tau}_{\mathbf{h}} \right\rangle_{\Gamma_R} + \int_{\Gamma_{\mathbf{h}}} \mathbf{g}(\boldsymbol{y}, \gamma_T^{\mathbf{h}} \mathbf{E}_{\mathbf{h}}) \cdot \gamma_T^{\mathbf{h}} \overline{\boldsymbol{\tau}_{\mathbf{h}}} \,\mathrm{d}s_{\boldsymbol{y}} \\
&\quad + \int_{\Omega_{\mathbf{h}}} (\nabla_{\boldsymbol{y}} \times \mathbf{Q}_{\mathbf{h}}) \cdot \overline{\boldsymbol{\nu}_{\mathbf{h}}} \,\mathrm{d}\boldsymbol{y} - k^2 \int_{\Omega_{\mathbf{h}}} \mathbf{E}_{\mathbf{h}} \cdot \overline{\boldsymbol{\nu}_{\mathbf{h}}} \,\mathrm{d}\boldsymbol{y}.
\end{aligned}
\end{equation}
Since the perturbation $\mathbf{h}$ is supported away from $\Gamma_R$, both the artificial boundary and the source functional $\mathcal{F}^{\mathrm{p}}$ remain invariant under the domain deformation.

We now pull the mixed system back to the fixed reference domain $\Omega$. As in the impedance case, we employ the covariant Piola transform for both variables:
\[
\hat{\mathbf{E}}_{\mathbf{h}} := J_\varphi^\top(\mathbf{E}_{\mathbf{h}} \circ \varphi_{\mathbf{h}}), \qquad \hat{\mathbf{Q}}_{\mathbf{h}} := J_\varphi^\top(\mathbf{Q}_{\mathbf{h}} \circ \varphi_{\mathbf{h}}).
\]
It is important to note that while $\mathbf{Q}_{\mathbf{h}} = \nabla_{\boldsymbol{y}} \times \mathbf{E}_{\mathbf{h}}$ holds in the perturbed domain, the covariant Piola transform does not commute with the curl operator in a simple manner. Instead, the transformation rule for the curl implies
\[
\hat{\mathbf{Q}}_{\mathbf{h}} = \mathcal{M}_{\mathbf{h}} (\nabla \times \hat{\mathbf{E}}_{\mathbf{h}}), \quad \text{or equivalently,} \quad \mathcal{N}_{\mathbf{h}} \hat{\mathbf{Q}}_{\mathbf{h}} = \nabla \times \hat{\mathbf{E}}_{\mathbf{h}}.
\]
In the pulled-back mixed formulation, $\hat{\mathbf{Q}}_{\mathbf{h}}$ is treated as an independent unknown in the product space $\mathbb{X}_{\mathrm{p}}(\Omega)$, rather than being explicitly identified with $\nabla \times \hat{\mathbf{E}}_{\mathbf{h}}$.

The pulled-back perturbed NPEC problem thus consists in finding a pair $(\hat{\mathbf{E}}_{\mathbf{h}}, \hat{\mathbf{Q}}_{\mathbf{h}}) \in \mathbb{X}_{\mathrm{p}}(\Omega)$ such that, for all $(\boldsymbol{\tau}, \boldsymbol{\nu}) \in \mathbb{X}_{\mathrm{p}}(\Omega)$,
\[
\widetilde{\mathcal{A}}^{\mathrm{p}}_{\mathbf{h}} \left( (\hat{\mathbf{E}}_{\mathbf{h}}, \hat{\mathbf{Q}}_{\mathbf{h}}), (\boldsymbol{\tau}, \boldsymbol{\nu}) \right) = \mathcal{F}^{\mathrm{p}}(\boldsymbol{\tau}, \boldsymbol{\nu}),
\]
which is expressed explicitly as
\[
\begin{aligned}
&\int_\Omega \hat{\mathbf{Q}}_{\mathbf{h}}^{\top} \mathcal{N}_{\mathbf{h}} \overline{\boldsymbol{\tau}} \,\mathrm{d}\boldsymbol{x} - \int_\Omega \hat{\mathbf{E}}_{\mathbf{h}} \cdot (\nabla \times \overline{\boldsymbol{\tau}}) \,\mathrm{d}\boldsymbol{x} + \frac{\mathrm{i}}{k} \left\langle \Lambda^{-1} \gamma_t \hat{\mathbf{Q}}_{\mathbf{h}}, \gamma_T \boldsymbol{\tau} \right\rangle_{\Gamma_R} \\
&\quad +\int_\Gamma \mathbf{g}(\varphi_{\mathbf{h}}(\cdot), \gamma_{T,\mathbf{h}} \hat{\mathbf{E}}_{\mathbf{h}}) \cdot \gamma_{T,\mathbf{h}} \overline{\boldsymbol{\tau}} \, \omega_{\mathbf{h}} \,\mathrm{d}s + \int_\Omega (\nabla \times \hat{\mathbf{Q}}_{\mathbf{h}}) \cdot \overline{\boldsymbol{\nu}} \,\mathrm{d}\boldsymbol{x} \\
&\quad - k^2 \int_\Omega \hat{\mathbf{E}}_{\mathbf{h}}^{\top} \mathcal{N}_{\mathbf{h}} \overline{\boldsymbol{\nu}} \,\mathrm{d}\boldsymbol{x} = \mathcal{F}^{\mathrm{p}}(\boldsymbol{\tau}, \boldsymbol{\nu}).
\end{aligned}
\]

By the stability of the pulled-back mixed system and the perturbation estimates
derived above, we obtain the following continuity result for the mixed solution
with respect to domain perturbations.

\begin{theo}\label{thm:continuity_p_mixed}
Suppose that the NPEC problem is uniquely solvable for all admissible
perturbations \(\mathbf h\) with \(\|\mathbf h\|_{C^1}\) sufficiently small.
Assume that the associated linear mixed PEC problem is well posed in
\[
\mathbb X_{\mathrm p}(\Omega)
:=
\mathbf X(\Omega)\times \mathbf H(\operatorname{curl};\Omega).
\]
Let \(\widetilde C_{\mathrm p}\) denote the corresponding stability constant.
Assume further that Assumption~\ref{ass:enhanced_g} holds and that
\[
2\widetilde C_{\mathrm p}C_\gamma^2L_g<1.
\]
Then, for all \(\|\mathbf h\|_{C^1}\) sufficiently small, the pulled-back
solutions satisfy
\[
\|\widehat{\mathbf E}_{\mathbf h}-\mathbf E\|_{\mathbf X(\Omega)}
+
\|\widehat{\mathbf Q}_{\mathbf h}-\mathbf Q\|_{\mathbf H(\operatorname{curl};\Omega)}
\le
C\|\mathbf h\|_{C^1}.
\]
In particular,
\[
\widehat{\mathbf E}_{\mathbf h}\to\mathbf E
\quad\text{in }\mathbf H(\operatorname{curl};\Omega)
\quad\text{as }\|\mathbf h\|_{C^1}\to0.
\]
\end{theo}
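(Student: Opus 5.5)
The argument transplants the proof of Theorem~\ref{th:continuity_domain_dependence} to the product space $\mathbb X_{\mathrm p}(\Omega)$. First split the pulled-back mixed form $\widetilde{\mathcal A}^{\mathrm p}_{\mathbf h}$ into two parts:
\begin{itemize}
\item a linear part $\widetilde{\mathcal A}^{\mathrm p}_{\mathbf h,l}$, containing the terms with $\mathcal N_{\mathbf h}$, the curl pairings and the $\Lambda^{-1}$ term on $\Gamma_R$;
\item a nonlinear part $\widetilde{\mathcal A}^{\mathrm p}_{\mathbf h,n}\bigl((\mathbf E,\mathbf Q),(\boldsymbol\tau,\boldsymbol\nu)\bigr):=\int_\Gamma \mathbf g(\varphi_{\mathbf h}(\cdot),\gamma_{T,\mathbf h}\mathbf E)\cdot\gamma_{T,\mathbf h}\overline{\boldsymbol\tau}\,\omega_{\mathbf h}\,ds$.
\end{itemize}
The same split at $\mathbf h=0$ gives $\mathcal A^{\mathrm p}_l$ and $\mathcal A^{\mathrm p}_n$. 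Via the Riesz representation in $\mathbb X_{\mathrm p}(\Omega)$ these induce operators $T^{\mathrm p}_l$, $T^{\mathrm p}_{\mathbf h,l}$, $T^{\mathrm p}_n$, $T^{\mathrm p}_{\mathbf h,n}$. By hypothesis $T^{\mathrm p}_l$ is invertible with $\|(T^{\mathrm p}_l)^{-1}\|\le\widetilde C_{\mathrm p}$.

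Next, Lemma~\ref{le:positive} gives $\|\mathcal N_{\mathbf h}-\mathrm{Id}\|_{L^\infty}=O(\|\mathbf h\|_{C^1})$. The linear forms differ only in the two $\mathcal N_{\mathbf h}$-weighted volume integrals, because the curl terms and the $\Gamma_R$ term are untouched: $\mathbf h$ vanishes near $\Gamma_R$ and the Piola transform commutes with the curl pairing. Hence $\|T^{\mathrm p}_{\mathbf h,l}-T^{\mathrm p}_l\|\le C\|\mathbf h\|_{C^1}$. The Neumann series argument used before then yields $\|(T^{\mathrm p}_{\mathbf h,l})^{-1}\|\le 2\widetilde C_{\mathrm p}$ for small $\mathbf h$.

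For the consistency error at the fixed reference solution $(\mathbf E,\mathbf Q)$, bound
\[
\|(T^{\mathrm p}-T^{\mathrm p}_{\mathbf h})(\mathbf E,\mathbf Q)\|\le C_{E}\|\mathbf h\|_{C^1}.
\]
The volume part follows from the $\mathcal N_{\mathbf h}$ expansion. The boundary part splits as
\[
\mathbf g(\varphi_{\mathbf h},\gamma_{T,\mathbf h}\mathbf E)\,\gamma_{T,\mathbf h}\overline{\boldsymbol\tau}\,\omega_{\mathbf h}-\mathbf g(\cdot,\gamma_T\mathbf E)\,\gamma_T\overline{\boldsymbol\tau}.
\]
This is handled term by term:
\begin{itemize}
\item the spatial shift is controlled by the mean value theorem with the bound $|\nabla_{\boldsymbol x}\mathbf g|\le\psi_1+c_1|\boldsymbol z|$ from (A1), giving $O(\|\mathbf h\|_{C^1}(\|\psi_1\|_{L^2}+\|\gamma_T\mathbf E\|_{L^2}))$;
\item the trace change uses the Lipschitz bound together with $\|P_{\mathbf h}J_\varphi^{-\top}-P\|=O(\|\mathbf h\|_{C^1})$ on $\Gamma$;
\item the test-trace and Jacobian changes use the growth bound $|\mathbf g|\le\psi_0+c_0|\boldsymbol z|$ and $\omega_{\mathbf h}-1=O(\|\mathbf h\|_{C^1})$.
\end{itemize}
Assumption (A4) makes all constants uniform in $\mathbf h$.

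Finally, subtract the two equations $T^{\mathrm p}_{\mathbf h}(\widehat{\mathbf E}_{\mathbf h},\widehat{\mathbf Q}_{\mathbf h})=\mathcal F^{\mathrm p}=T^{\mathrm p}(\mathbf E,\mathbf Q)$ and apply $(T^{\mathrm p}_{\mathbf h,l})^{-1}$, exactly as in \eqref{eq:difference_identity}. The nonlinear difference involves only the $\mathbf E$-component, and the uniform Lipschitz bound with $\|\gamma_{T,\mathbf h}\cdot\|_{L^2(\Gamma)}\le C_\gamma\|\cdot\|_{\mathbf X(\Omega)}$ gives
\[
\|T^{\mathrm p}_{\mathbf h,n}(\widehat{\mathbf E}_{\mathbf h},\cdot)-T^{\mathrm p}_{\mathbf h,n}(\mathbf E,\cdot)\|\le C_\gamma^2L_g\|\widehat{\mathbf E}_{\mathbf h}-\mathbf E\|_{\mathbf X(\Omega)}.
\]
Absorbing this term with $2\widetilde C_{\mathrm p}C_\gamma^2L_g<1$ gives the stated estimate with $C=2\widetilde C_{\mathrm p}C_E/(1-2\widetilde C_{\mathrm p}C_\gamma^2L_g)$. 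Convergence in $H(\operatorname{curl})$ follows from the embedding $\mathbf X(\Omega)\hookrightarrow H(\operatorname{curl};\Omega)$.

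The main obstacle is the consistency estimate for the nonlinear boundary term. In particular, the trace variation $\gamma_{T,\mathbf h}\mathbf E-\gamma_T\mathbf E$ must be shown to be $O(\|\mathbf h\|_{C^1})$ in $L^2(\Gamma)$, and the spatial shift $\varphi_{\mathbf h}$ must be controlled using only the growth bounds and uniform $L^2$ control of $\psi_0,\psi_1$ on $\Gamma_{\mathbf h}$ pulled back to $\Gamma$. A secondary point needs care: the $\Gamma_R$ pairing involves $\hat{\mathbf Q}_{\mathbf h}$ only through $\gamma_t$. It is unaffected because $J_\varphi=\mathrm{Id}$ near $\Gamma_R$, so the pulled-back and reference linear operators agree there.
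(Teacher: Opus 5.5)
Your proposal is correct and follows the paper's proof essentially step for step. Both use Riesz operators on $\mathbb X_{\mathrm p}(\Omega)$, a Neumann-series bound $\|(T^{\mathrm p}_{\mathbf h,l})^{-1}\|\le 2\widetilde C_{\mathrm p}$ from the $\mathcal N_{\mathbf h}$ expansion, a fixed-field consistency estimate at $(\mathbf E,\mathbf Q)$, and absorption of the nonlinear difference $C_\gamma^2L_g\|\widehat{\mathbf E}_{\mathbf h}-\mathbf E\|_{\mathbf X(\Omega)}$ under $2\widetilde C_{\mathrm p}C_\gamma^2L_g<1$; the only difference is that you justify the consistency bound for the nonlinear boundary term and explain why only the $\mathcal N_{\mathbf h}$-weighted volume terms change, both of which the paper simply asserts by analogy with the impedance case.
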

\begin{proof}

Define the increments associated with the electric and auxiliary fields as
\[
\Delta_{\mathbf{h}}^E := \hat{\mathbf{E}}_{\mathbf{h}} - \mathbf{E}, \qquad \Delta_{\mathbf{h}}^Q := \hat{\mathbf{Q}}_{\mathbf{h}} - \mathbf{Q}.
\]
Let
$
T_l^{\mathrm p}:\mathbb X_{\mathrm p}(\Omega)\to
\mathbb X_{\mathrm p}(\Omega)$
be the Riesz operator associated with the linear mixed form
\(\mathcal A_l^{\mathrm p}\), namely
\[
\bigl(
T_l^{\mathrm p}(\mathbf U,\mathbf Q),
(\boldsymbol\tau,\boldsymbol\nu)
\bigr)_{\mathbb X_{\mathrm p}(\Omega)}
=
\mathcal A_l^{\mathrm p}
\bigl(
(\mathbf U,\mathbf Q),
(\boldsymbol\tau,\boldsymbol\nu)
\bigr).
\]
By the well-posedness of the associated linear mixed PEC problem,
\(T_l^{\mathrm p}\) is a bounded linear isomorphism on
\(\mathbb X_{\mathrm p}(\Omega)\). Hence
\((T_l^{\mathrm p})^{-1}\) exists and is bounded. We set
\[
\widetilde C_{\mathrm p}
:=
\bigl\|(T_l^{\mathrm p})^{-1}\bigr\|_
{\mathcal L(\mathbb X_{\mathrm p}(\Omega))}.
\]By the asymptotic expansion of $\mathcal{N}_{\mathbf{h}}$ provided in Lemma~\ref{le:positive}, it follows that
\[
\|T_{\mathbf{h},l}^{\mathrm{p}} - T_l^{\mathrm{p}}\|_{\mathcal{L}(\mathbb{X}_{\mathrm{p}}(\Omega))} \le C \|\mathbf{h}\|_{C^1}.
\]
Consequently, for $\|\mathbf{h}\|_{C^1}$ sufficiently small, $\|(T_l^{\mathrm{p}})^{-1} (T_{\mathbf{h},l}^{\mathrm{p}} - T_l^{\mathrm{p}})\|_{\mathcal{L}(\mathbb{X}_{\mathrm{p}}(\Omega))} \le 1/2$. A standard Neumann series argument then ensures the invertibility of $T_{\mathbf{h},l}^{\mathrm{p}}$ with the uniform stability bound
\[
\|(T_{\mathbf{h},l}^{\mathrm{p}})^{-1}\|_{\mathcal{L}(\mathbb{X}_{\mathrm{p}}(\Omega))} \le \frac{\|(T_l^{\mathrm{p}})^{-1}\|}{1 - \|(T_l^{\mathrm{p}})^{-1}(T_{\mathbf{h},l}^{\mathrm{p}} - T_l^{\mathrm{p}})\|} \le 2\widetilde{C}_{\mathrm{p}}.
\]

Since both the reference and pulled-back mixed formulations share the same source functional, the corresponding Riesz operators satisfy
\[
T_{\mathbf{h}}^{\mathrm{p}} (\hat{\mathbf{E}}_{\mathbf{h}}, \hat{\mathbf{Q}}_{\mathbf{h}}) = T^{\mathrm{p}}(\mathbf{E}, \mathbf{Q}),
\]
where $T^{\mathrm{p}} := T_l^{\mathrm{p}} + T_n^{\mathrm{p}}$ and $T_{\mathbf{h}}^{\mathrm{p}} := T_{\mathbf{h},l}^{\mathrm{p}} + T_{\mathbf{h},n}^{\mathrm{p}}$, with $T_n^{\mathrm{p}}$ and $T_{\mathbf{h},n}^{\mathrm{p}}$ denoting the operators induced by the nonlinear boundary terms. Rearranging this identity yields
\[
T_{\mathbf{h},l}^{\mathrm{p}}(\Delta_{\mathbf{h}}^E, \Delta_{\mathbf{h}}^Q) + \left[ T_{\mathbf{h},n}^{\mathrm{p}}(\hat{\mathbf{E}}_{\mathbf{h}}) - T_{\mathbf{h},n}^{\mathrm{p}}(\mathbf{E}) \right] = (T_l^{\mathrm{p}} - T_{\mathbf{h},l}^{\mathrm{p}})(\mathbf{E}, \mathbf{Q}) + (T_n^{\mathrm{p}} - T_{\mathbf{h},n}^{\mathrm{p}})(\mathbf{E}).
\]
Analogous to the impedance case, the fixed-field perturbation estimate follows from the expansions of $\mathcal{N}_{\mathbf{h}}$, the transported trace, and the surface Jacobian, giving
\[
\left\| (T_l^{\mathrm{p}} - T_{\mathbf{h},l}^{\mathrm{p}})(\mathbf{E}, \mathbf{Q}) + (T_n^{\mathrm{p}} - T_{\mathbf{h},n}^{\mathrm{p}})(\mathbf{E}) \right\|_{\mathbb{X}_{\mathrm{p}}(\Omega)} \le C_{\mathbf{E}, \mathbf{Q}} \|\mathbf{h}\|_{C^1}.
\]
Applying $(T_{\mathbf{h},l}^{\mathrm{p}})^{-1}$ and utilizing the stability bound, we obtain
\begin{equation} \label{eq:NPEC_stability_intermediate}
\|(\Delta_{\mathbf{h}}^E, \Delta_{\mathbf{h}}^Q)\|_{\mathbb{X}_{\mathrm{p}}(\Omega)} \le 2\widetilde{C}_{\mathrm{p}} C_{\mathbf{E}, \mathbf{Q}} \|\mathbf{h}\|_{C^1} + 2\widetilde{C}_{\mathrm{p}} \left\| T_{\mathbf{h},n}^{\mathrm{p}}(\hat{\mathbf{E}}_{\mathbf{h}}) - T_{\mathbf{h},n}^{\mathrm{p}}(\mathbf{E}) \right\|_{\mathbb{X}_{\mathrm{p}}(\Omega)}.
\end{equation}

To bound the nonlinear difference, we observe that by the definition of the Riesz operator,
\[
\begin{aligned}
\left\| T_{\mathbf{h},n}^{\mathrm{p}}(\hat{\mathbf{E}}_{\mathbf{h}}) - T_{\mathbf{h},n}^{\mathrm{p}}(\mathbf{E}) \right\|_{\mathbb{X}_{\mathrm{p}}(\Omega)} &= \sup_{\|(\boldsymbol{\tau}, \boldsymbol{\nu})\|_{\mathbb{X}_{\mathrm{p}}(\Omega)}=1} \left| \widetilde{\mathcal{A}}_{\mathbf{h},n}^{\mathrm{p}}(\hat{\mathbf{E}}_{\mathbf{h}}; \boldsymbol{\tau}) - \widetilde{\mathcal{A}}_{\mathbf{h},n}^{\mathrm{p}}(\mathbf{E}; \boldsymbol{\tau}) \right|.
\end{aligned}
\]
Utilizing the definition of the pulled-back nonlinear boundary term, we have
\[
\begin{aligned}
&\widetilde{\mathcal{A}}_{\mathbf{h},n}^{\mathrm{p}}(\hat{\mathbf{E}}_{\mathbf{h}}; \boldsymbol{\tau}) - \widetilde{\mathcal{A}}_{\mathbf{h},n}^{\mathrm{p}}(\mathbf{E}; \boldsymbol{\tau}) \\
&\quad =  \int_\Gamma \Big[ \mathbf{g}(\varphi_{\mathbf{h}}(\cdot), \gamma_{T,\mathbf{h}} \hat{\mathbf{E}}_{\mathbf{h}}) - \mathbf{g}(\varphi_{\mathbf{h}}(\cdot), \gamma_{T,\mathbf{h}} \mathbf{E}) \Big] \cdot \gamma_{T,\mathbf{h}} \overline{\boldsymbol{\tau}} \, \omega_{\mathbf{h}} \, \mathrm{d}s.
\end{aligned}
\]
Invoking the uniform $L^2$-Lipschitz continuity of $\mathbf{g}$, the uniform boundedness of $\omega_{\mathbf{h}}$, and the stability of the transported trace operator, it follows that
\[
\left\| T_{\mathbf{h},n}^{\mathrm{p}}(\hat{\mathbf{E}}_{\mathbf{h}}) - T_{\mathbf{h},n}^{\mathrm{p}}(\mathbf{E}) \right\|_{\mathbb{X}_{\mathrm{p}}(\Omega)} \le C_\gamma^2 L_g \|\Delta_{\mathbf{h}}^E\|_{\mathbf{X}(\Omega)}.
\]
Since $\|\Delta_{\mathbf{h}}^E\|_{\mathbf{X}(\Omega)} \le \|(\Delta_{\mathbf{h}}^E, \Delta_{\mathbf{h}}^Q)\|_{\mathbb{X}_{\mathrm{p}}(\Omega)}$, we obtain the inequality
\[
\left( 1 - 2\widetilde{C}_{\mathrm{p}} C_\gamma^2 L_g \right) \|(\Delta_{\mathbf{h}}^E, \Delta_{\mathbf{h}}^Q)\|_{\mathbb{X}_{\mathrm{p}}(\Omega)} \le 2\widetilde{C}_{\mathrm{p}} C_{\mathbf{E}} \|\mathbf{h}\|_{C^1}.
\]
Under the smallness condition $2\widetilde{C}_{\mathrm{p}} C_\gamma^2 L_g < 1$, we deduce the stability estimate
\[
\|\hat{\mathbf{E}}_{\mathbf{h}} - \mathbf{E}\|_{\mathbf{X}(\Omega)} + \|\hat{\mathbf{Q}}_{\mathbf{h}} - \mathbf{Q}\|_{H(\operatorname{curl}; \Omega)} \le C \|\mathbf{h}\|_{C^1}.
\]
Finally, in view of the continuous embedding $\mathbf{X}(\Omega) \hookrightarrow H(\operatorname{curl}; \Omega)$, we conclude that
\[
\lim_{\|\mathbf{h}\|_{C^1} \to 0} \|\hat{\mathbf{E}}_{\mathbf{h}} - \mathbf{E}\|_{H(\operatorname{curl}; \Omega)} = 0,
\]
which completes the proof.

\end{proof}

Having established the continuity of the pulled-back mixed solution in
Theorem~\ref{thm:continuity_p_mixed}, we now turn to its first-order
perturbation analysis.
\begin{theo}\label{th:p_material_differentiability}
Suppose that the hypotheses of Theorem~\ref{thm:continuity_p_mixed} are satisfied. Assume further that the associated $\mathbb{R}$-linearized mixed NPEC problem is well-posed in $\mathbb{X}_{\mathrm{p}}(\Omega) := \mathbf X(\Omega)\times H(\operatorname{curl};\Omega)$, in the sense that the induced operator is a bounded linear isomorphism. 
Then the pulled-back solution $(\hat{\mathbf{E}}_{\mathbf{h}}, \hat{\mathbf{Q}}_{\mathbf{h}})$ is  differentiable with respect to the domain perturbation $\mathbf{h} \in C_c^1(B_R; \mathbb{R}^3)$ at $\mathbf{h} = \mathbf{0}$. Specifically, there exists a unique pair $(\mathbf{W}, \mathbf{P}) \in \mathbb{X}_{\mathrm{p}}(\Omega)$, depending linearly and continuously on $\mathbf{h}$, such that
\begin{equation*}
\lim_{\|\mathbf{h}\|_{C^1} \to 0} \frac{\|\hat{\mathbf{E}}_{\mathbf{h}} - \mathbf{E} - \mathbf{W}\|_{\mathbf{X}(\Omega)} + \|\hat{\mathbf{Q}}_{\mathbf{h}} - \mathbf{Q} - \mathbf{P}\|_{H(\operatorname{curl};\Omega)}}{\|\mathbf{h}\|_{C^1}} = 0,
\end{equation*}
where $(\mathbf{W}, \mathbf{P})$ denotes the material derivative of the mixed field. Under the relation $\mathcal{N}_{\mathbf{h}} \hat{\mathbf{Q}}_{\mathbf{h}} = \nabla \times \hat{\mathbf{E}}_{\mathbf{h}}$, the material derivatives satisfy the linearized identity
\[
\mathbf{P} + \dot{\mathcal{N}}_{\mathbf{h}} \mathbf{Q} = \nabla \times \mathbf{W}
\]
in the weak sense encoded by the mixed variational system. 

The pair $(\mathbf{W}, \mathbf{P})$ is the unique solution to the linearized problem
\[
\widehat{\mathcal{A}}^{\mathrm{p}} \bigl( (\mathbf{W}, \mathbf{P}), (\boldsymbol{\tau}, \boldsymbol{\nu}) \bigr) = \mathcal{L}_{\mathbf{h}}^{\mathrm{p}}(\boldsymbol{\tau}, \boldsymbol{\nu}) \qquad \forall (\boldsymbol{\tau}, \boldsymbol{\nu}) \in \mathbb{X}_{\mathrm{p}}(\Omega),
\]
where the $\mathbb{R}$-linear form $\widehat{\mathcal{A}}^{\mathrm{p}}$ is defined by
\begin{equation*}
\begin{aligned}
\widehat{\mathcal{A}}^{\mathrm{p}} \bigl( (\mathbf{W}, \mathbf{P}), (\boldsymbol{\tau}, \boldsymbol{\nu}) \bigr) &:= \int_{\Omega} \mathbf{P} \cdot \overline{\boldsymbol{\tau}} \,\mathrm{d}\boldsymbol{x} - \int_{\Omega} \mathbf{W} \cdot (\nabla \times \overline{\boldsymbol{\tau}}) \,\mathrm{d}\boldsymbol{x} \\
&\quad + \frac{\mathrm{i}}{k} \left\langle \Lambda^{-1} \gamma_t \mathbf{P}, \gamma_T \boldsymbol{\tau} \right\rangle_{\Gamma_R} + \int_{\Gamma} \mathbf{g}_{\boldsymbol{z}}(\cdot, \gamma_T \mathbf{E}; \gamma_T \mathbf{W}) \cdot \gamma_T \overline{\boldsymbol{\tau}} \,\mathrm{d}s \\
&\quad + \int_{\Omega} (\nabla \times \mathbf{P}) \cdot \overline{\boldsymbol{\nu}} \,\mathrm{d}\boldsymbol{x} - k^2 \int_{\Omega} \mathbf{W} \cdot \overline{\boldsymbol{\nu}} \,\mathrm{d}\boldsymbol{x},
\end{aligned}
\end{equation*}
and the right-hand side functional $\mathcal{L}_{\mathbf{h}}^{\mathrm{p}}$ is given by
\begin{equation*}
\begin{aligned}
\mathcal{L}_{\mathbf{h}}^{\mathrm{p}}(\boldsymbol{\tau}, \boldsymbol{\nu}) &:= -\int_{\Omega} \mathbf{Q}^{\top} \dot{\mathcal{N}}_{\mathbf{h}} \overline{\boldsymbol{\tau}} \,\mathrm{d}\boldsymbol{x} + k^2 \int_{\Omega} \mathbf{E}^{\top} \dot{\mathcal{N}}_{\mathbf{h}} \overline{\boldsymbol{\nu}} \,\mathrm{d}\boldsymbol{x} \\
&\quad - \int_{\Gamma} \left[ \nabla_{\boldsymbol{x}} \mathbf{g}(\cdot, \gamma_T \mathbf{E}) \mathbf{h} + \mathbf{g}_{\boldsymbol{z}}(\cdot, \gamma_T \mathbf{E}; \dot{\gamma}_{T,\mathbf{h}} \mathbf{E}) \right] \cdot \gamma_T \overline{\boldsymbol{\tau}} \,\mathrm{d}s \\
&\quad - \int_{\Gamma} \mathbf{g}(\cdot, \gamma_T \mathbf{E}) \cdot \dot{\gamma}_{T,\mathbf{h}} \overline{\boldsymbol{\tau}} \,\mathrm{d}s - \int_{\Gamma} \mathbf{g}(\cdot, \gamma_T \mathbf{E}) \cdot \gamma_T \overline{\boldsymbol{\tau}} \, \dot{\omega}_{\mathbf{h}} \,\mathrm{d}s.
\end{aligned}
\end{equation*}
\end{theo}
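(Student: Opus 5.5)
The plan mirrors the proof of Theorem~\ref{th:symmetrisation1}, transplanted to the product space $\mathbb X_{\mathrm p}(\Omega)$. Define the increments $\Delta^E_{\mathbf h}:=\hat{\mathbf E}_{\mathbf h}-\mathbf E$ and $\Delta^Q_{\mathbf h}:=\hat{\mathbf Q}_{\mathbf h}-\mathbf Q$. By Theorem~\ref{thm:continuity_p_mixed} we already have $\|(\Delta^E_{\mathbf h},\Delta^Q_{\mathbf h})\|_{\mathbb X_{\mathrm p}(\Omega)}=O(\|\mathbf h\|_{C^1})$.

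First, let $(\mathbf W,\mathbf P)$ be the unique solution of $\widehat{\mathcal A}^{\mathrm p}((\mathbf W,\mathbf P),\cdot)=\mathcal L^{\mathrm p}_{\mathbf h}$. It exists by the assumed isomorphism property of the $\mathbb R$-linearized mixed operator, once we check that $\mathcal L^{\mathrm p}_{\mathbf h}\in\mathbb X_{\mathrm p}(\Omega)^*$ with norm $\le C\|\mathbf h\|_{C^1}$. The volume terms are bounded by $\|\dot{\mathcal N}_{\mathbf h}\|_{L^\infty}\le C\|\mathbf h\|_{C^1}$ from Lemma~\ref{le:positive}. The boundary terms are bounded using (A1)--(A2), the bound $\|\dot\gamma_{T,\mathbf h}\mathbf U\|_{L^2(\Gamma)}\le C\|\mathbf h\|_{C^1}\|\mathbf U\|_{\mathbf X(\Omega)}$, and $\|\dot\omega_{\mathbf h}\|_{L^\infty}\le C\|\mathbf h\|_{C^1}$. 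Each term of $\mathcal L^{\mathrm p}_{\mathbf h}$ is $\mathbb R$-linear in $\mathbf h$, so $(\mathbf W,\mathbf P)$ depends linearly and continuously on $\mathbf h$.

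Second, subtract the reference mixed identity from the pulled-back one, using the common right-hand side $\mathcal F^{\mathrm p}$. This gives $\widetilde{\mathcal A}^{\mathrm p}_{\mathbf h}((\hat{\mathbf E}_{\mathbf h},\hat{\mathbf Q}_{\mathbf h}),\cdot)-\mathcal A^{\mathrm p}((\mathbf E,\mathbf Q),\cdot)=0$. With $\mathbf R^E_{\mathbf h}:=\Delta^E_{\mathbf h}-\mathbf W$ and $\mathbf R^Q_{\mathbf h}:=\Delta^Q_{\mathbf h}-\mathbf P$, this identity becomes the error equation
\[
\widehat{\mathcal A}^{\mathrm p}\bigl((\mathbf R^E_{\mathbf h},\mathbf R^Q_{\mathbf h}),(\boldsymbol\tau,\boldsymbol\nu)\bigr)=r^{\Omega}_{\mathbf h}(\boldsymbol\tau,\boldsymbol\nu)+r^{g}_{\mathbf h}(\boldsymbol\tau).
\]
The volume residual is
\[
r^\Omega_{\mathbf h}=-\int_\Omega(\Delta^Q_{\mathbf h})^\top(\mathcal N_{\mathbf h}-\mathrm{Id})\overline{\boldsymbol\tau}-\int_\Omega\mathbf Q^\top(\mathcal N_{\mathbf h}-\mathrm{Id}-\dot{\mathcal N}_{\mathbf h})\overline{\boldsymbol\tau}
\]
plus the analogous two $k^2$-terms in $\boldsymbol\nu$. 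The curl pairings, the $\Lambda^{-1}$ term on $\Gamma_R$ and the $(\nabla\times\cdot)\cdot\overline{\boldsymbol\nu}$ term carry no geometric coefficients, because $\mathbf h$ vanishes near $\Gamma_R$; so they cancel exactly against $\widehat{\mathcal A}^{\mathrm p}$. By the $O(\|\mathbf h\|)$ bound on the increments and Lemma~\ref{le:positive}, $\|r^\Omega_{\mathbf h}\|_{\mathbb X_{\mathrm p}^*}=O(\|\mathbf h\|^2)+o(\|\mathbf h\|)$. The boundary residual $r^g_{\mathbf h}$ is exactly as in the impedance proof (with opposite sign). It is $o(\|\mathbf h\|_{C^1})$ by the Nemytskii expansions (A3)--(A4) applied to $\mathbf g(\varphi_{\mathbf h}(\cdot),\gamma_T\mathbf E+\gamma_T\Delta^E_{\mathbf h}+\dot\gamma_{T,\mathbf h}\mathbf E+o)$. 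We combine these with the expansions of $\gamma_{T,\mathbf h}\overline{\boldsymbol\tau}$ and $\omega_{\mathbf h}$, noting that products of two first-order quantities are $O(\|\mathbf h\|^2)$. The inverse of $\widehat{\mathcal A}^{\mathrm p}$ is bounded, hence $\|(\mathbf R^E_{\mathbf h},\mathbf R^Q_{\mathbf h})\|_{\mathbb X_{\mathrm p}}=o(\|\mathbf h\|_{C^1})$, which is the claimed differentiability. Uniqueness of $(\mathbf W,\mathbf P)$ follows from uniqueness of Fréchet derivatives, or directly from the isomorphism property.

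Third, for the identity $\mathbf P+\dot{\mathcal N}_{\mathbf h}\mathbf Q=\nabla\times\mathbf W$, test with $\boldsymbol\nu=0$ and with $\boldsymbol\tau\in C_c^\infty(\Omega)^3$. The boundary terms then vanish, and the linearized system reads $\int_\Omega(\mathbf P+\dot{\mathcal N}_{\mathbf h}\mathbf Q)\cdot\overline{\boldsymbol\tau}=\int_\Omega\mathbf W\cdot\nabla\times\overline{\boldsymbol\tau}$, using that $\dot{\mathcal N}_{\mathbf h}$ is symmetric. This is precisely the distributional identity. Equivalently, it is obtained by differentiating $\mathcal N_{\mathbf h}\hat{\mathbf Q}_{\mathbf h}=\nabla\times\hat{\mathbf E}_{\mathbf h}$ in $L^2$ via the step above.

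The main obstacle is the nonlinear residual $r^g_{\mathbf h}$. There the $\mathbf x$-shift $\varphi_{\mathbf h}$ and the trace increment act simultaneously. I would split
\[
\mathbf g(\varphi_{\mathbf h},\boldsymbol\zeta_{\mathbf h})-\mathbf g(\cdot,\boldsymbol\zeta)=[\mathbf g(\varphi_{\mathbf h},\boldsymbol\zeta_{\mathbf h})-\mathbf g(\cdot,\boldsymbol\zeta_{\mathbf h})]+[\mathbf g(\cdot,\boldsymbol\zeta_{\mathbf h})-\mathbf g(\cdot,\boldsymbol\zeta)].
\]
The first bracket is handled by the mean value theorem in $\mathbf x$ together with the $\mathbf g_{\mathbf x\mathbf z}$ expansion, which replaces $\nabla_{\mathbf x}\mathbf g(\cdot,\boldsymbol\zeta_{\mathbf h})$ by $\nabla_{\mathbf x}\mathbf g(\cdot,\boldsymbol\zeta)$ up to $o(1)$ in $L^2$. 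The second bracket is handled by the $\mathbf g_{\mathbf z}$ Nemytskii remainder, uniform on bounded sets by (A3). The uniformity in $\mathbf h$ from (A4) is what makes the $o(\|\mathbf h\|_{C^1})$ bound uniform over test functions.
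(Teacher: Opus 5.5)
Your proposal is correct and follows essentially the same route as the paper: the same error equation for $(\mathbf R^E_{\mathbf h},\mathbf R^Q_{\mathbf h})$, volume residuals controlled by $\mathcal N_{\mathbf h}-\mathrm{Id}-\dot{\mathcal N}_{\mathbf h}=o(\|\mathbf h\|_{C^1})$ together with the $O(\|\mathbf h\|_{C^1})$ increments, an $o(\|\mathbf h\|_{C^1})$ Nemytskii boundary remainder, and inversion of the linearized isomorphism. You also supply two details the paper omits, namely the bound $\|\mathcal L^{\mathrm p}_{\mathbf h}\|_{\mathbb X_{\mathrm p}(\Omega)^*}\le C\|\mathbf h\|_{C^1}$ and a derivation of $\mathbf P+\dot{\mathcal N}_{\mathbf h}\mathbf Q=\nabla\times\mathbf W$ by testing with compactly supported $\boldsymbol\tau$; two small corrections are that the curl pairings are coefficient-free because of the covariant Piola structure (not because $\mathbf h$ vanishes near $\Gamma_R$), and that after the mean value theorem in $\boldsymbol x$ you also need continuity of $\nabla_{\boldsymbol x}\mathbf g$ in $\boldsymbol x$, not only the $\mathbf g_{\boldsymbol x\boldsymbol z}$ expansion.
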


\begin{proof}
Define the increments for the electric and auxiliary fields as
\[
\Delta_{\mathbf{h}}^E := \hat{\mathbf{E}}_{\mathbf{h}} - \mathbf{E}, \qquad \Delta_{\mathbf{h}}^Q := \hat{\mathbf{Q}}_{\mathbf{h}} - \mathbf{Q}.
\]
According to Theorem~\ref{thm:continuity_p_mixed}, these increments satisfy the stability estimate $\|\Delta_{\mathbf{h}}^E\|_{\mathbf{X}(\Omega)} + \|\Delta_{\mathbf{h}}^Q\|_{H(\operatorname{curl};\Omega)} = O(\|\mathbf{h}\|_{C^1})$. Since the source functional $\mathcal{F}^{\mathrm{p}}$ is invariant under the perturbation, the reference and pulled-back perturbed formulations satisfy the consistency identity
\begin{equation}\label{eq:NPEC_consistency}
\widetilde{\mathcal{A}}^{\mathrm{p}}_{\mathbf{h}}((\hat{\mathbf{E}}_{\mathbf{h}}, \hat{\mathbf{Q}}_{\mathbf{h}}), (\boldsymbol{\tau}, \boldsymbol{\nu})) - \mathcal{A}^{\mathrm{p}}((\mathbf{E}, \mathbf{Q}), (\boldsymbol{\tau}, \boldsymbol{\nu})) = 0 \qquad \forall (\boldsymbol{\tau}, \boldsymbol{\nu}) \in \mathbb{X}_{\mathrm{p}}(\Omega).
\end{equation}

By invoking the first-order expansions of the geometric coefficients from Lemma~\ref{le:positive}, namely $\mathcal{N}_{\mathbf{h}} = \mathrm{Id} + \dot{\mathcal{N}}_{\mathbf{h}} + o(\|\mathbf{h}\|_{C^1})$ in $L^\infty(\Omega)$ and $\omega_{\mathbf{h}} = 1 + \dot{\omega}_{\mathbf{h}} + o(\|\mathbf{h}\|_{C^1})$ in $L^\infty(\Gamma)$, together with the trace expansion $\gamma_{T,\mathbf{h}}\mathbf{U} = \gamma_T\mathbf{U} + \dot{\gamma}_{T,\mathbf{h}}\mathbf{U} + o(\|\mathbf{h}\|_{C^1})$ in $L^2(\Gamma)$, we expand the nonlinear boundary term as
\[
\begin{aligned}
\mathbf{g}(\varphi_{\mathbf{h}}(\cdot), \gamma_{T,\mathbf{h}}\hat{\mathbf{E}}_{\mathbf{h}}) &= \mathbf{g}(\cdot, \gamma_T\mathbf{E}) + \mathbf{g}_{\boldsymbol{z}}(\cdot, \gamma_T\mathbf{E}; \gamma_T\Delta_{\mathbf{h}}^E) + \nabla_{\boldsymbol{x}}\mathbf{g}(\cdot, \gamma_T\mathbf{E})\mathbf{h} \\
&\quad + \mathbf{g}_{\boldsymbol{z}}(\cdot, \gamma_T\mathbf{E}; \dot{\gamma}_{T,\mathbf{h}}\mathbf{E}) + o(\|\mathbf{h}\|_{C^1}) \quad \text{in } L^2(\Gamma).
\end{aligned}
\]

Let $(\mathbf{R}_{\mathbf{h}}^E, \mathbf{R}_{\mathbf{h}}^Q) := (\Delta_{\mathbf{h}}^E - \mathbf{W}, \Delta_{\mathbf{h}}^Q - \mathbf{P})$ denote the remainder pair. Testing this pair against the linearized form $\widehat{\mathcal{A}}^{\mathrm{p}}$ and utilizing the identity \eqref{eq:NPEC_consistency}, we obtain the error equation
\[
\begin{aligned}
&\widehat{\mathcal{A}}^{\mathrm{p}}((\mathbf{R}_{\mathbf{h}}^E, \mathbf{R}_{\mathbf{h}}^Q), (\boldsymbol{\tau}, \boldsymbol{\nu})) \\
&\quad = -\int_\Omega \mathbf{Q}^\top (\mathcal{N}_{\mathbf{h}} - \mathrm{Id} - \dot{\mathcal{N}}_{\mathbf{h}}) \overline{\boldsymbol{\tau}} \,\mathrm{d}\boldsymbol{x} - \int_\Omega (\Delta_{\mathbf{h}}^Q)^\top (\mathcal{N}_{\mathbf{h}} - \mathrm{Id}) \overline{\boldsymbol{\tau}} \,\mathrm{d}\boldsymbol{x} \\
&\quad \quad + k^2 \int_\Omega \mathbf{E}^\top (\mathcal{N}_{\mathbf{h}} - \mathrm{Id} - \dot{\mathcal{N}}_{\mathbf{h}}) \overline{\boldsymbol{\nu}} \,\mathrm{d}\boldsymbol{x} + k^2 \int_\Omega (\Delta_{\mathbf{h}}^E)^\top (\mathcal{N}_{\mathbf{h}} - \mathrm{Id}) \overline{\boldsymbol{\nu}} \,\mathrm{d}\boldsymbol{x} + \mathcal{N}_{\mathbf{h}}^g(\boldsymbol{\tau}),
\end{aligned}
\]
where \(\mathcal N_{\mathbf h}^{g}(\boldsymbol{\tau})\) denotes the nonlinear
boundary remainder, namely the part left after subtracting from the perturbed
boundary integral its reference value and all first-order terms generated by
the linearized form \(\widehat{\mathcal A}^{\mathrm p}\) and the functional
\(\mathcal L_{\mathbf h}^{\mathrm p}\). By the \(L^2\)-Nemytskii expansion in
Assumption~\eqref{ass:enhanced_g}, together with the first-order expansions of
\(\gamma_{T,\mathbf h}\) and \(\omega_{\mathbf h}\), it follows that
\[
\|\mathcal N_{\mathbf h}^{g}\|_{\mathbb X_{\mathrm p}(\Omega)^*}
=
o(\|\mathbf h\|_{C^1}).
\]
Similarly, for the volume terms, the geometric expansion $\mathcal{N}_{\mathbf{h}} - \mathrm{Id} - \dot{\mathcal{N}}_{\mathbf{h}} = o(\|\mathbf{h}\|_{C^1})$ and the stability estimates $\Delta_{\mathbf{h}}^E, \Delta_{\mathbf{h}}^Q = O(\|\mathbf{h}\|_{C^1})$ imply that the right-hand side of the error equation is $o(\|\mathbf{h}\|_{C^1})$ in the dual space $\mathbb{X}_{\mathrm{p}}(\Omega)^*$. We thus conclude
\[
\left\| \widehat{\mathcal{A}}^{\mathrm{p}}((\mathbf{R}_{\mathbf{h}}^E, \mathbf{R}_{\mathbf{h}}^Q), \cdot) \right\|_{\mathbb{X}_{\mathrm{p}}(\Omega)^*} = o(\|\mathbf{h}\|_{C^1}).
\]
By the assumed well-posedness of the $\mathbb{R}$-linearized mixed problem, the induced operator is a bounded linear isomorphism, yielding
\[
\|\mathbf{R}_{\mathbf{h}}^E\|_{\mathbf{X}(\Omega)} + \|\mathbf{R}_{\mathbf{h}}^Q\|_{H(\operatorname{curl}; \Omega)} \le C \left\| \widehat{\mathcal{A}}^{\mathrm{p}}((\mathbf{R}_{\mathbf{h}}^E, \mathbf{R}_{\mathbf{h}}^Q), \cdot) \right\|_{\mathbb{X}_{\mathrm{p}}(\Omega)^*} = o(\|\mathbf{h}\|_{C^1}).
\]
we finally obtain
\[
\lim_{\|\mathbf{h}\|_{C^1} \to 0} \frac{\|\hat{\mathbf{E}}_{\mathbf{h}} - \mathbf{E} - \mathbf{W}\|_{\mathbf{X}(\Omega)} + \|\hat{\mathbf{Q}}_{\mathbf{h}} - \mathbf{Q} - \mathbf{P}\|_{H(\operatorname{curl};\Omega)}}{\|\mathbf{h}\|_{C^1}} = 0,
\]
which completes the proof.
\end{proof}

\subsection{The nonlinear transmission condition}

Finally, we address the nonlinear transmission condition. Since the corresponding variational framework was established in Section~\ref{sec:2}, we continue to work in the trace-regular space $\mathbf{X}(\Omega)$.

In the transmission setting, the material parameters \(\mu_r\) and
\(\varepsilon_r\) are assumed to be piecewise constant on the two sides of the
interface \(\Gamma\). The continuity of the tangential trace,
\([\gamma_t\mathbf E]=0\), is encoded by the global
\(H(\operatorname{curl};\Omega)\)-regularity of the field. In the present
trace-regular framework, we work in the subspace \(\mathbf X(\Omega)\), so that
the common tangential component \(\gamma_T\mathbf E\) belongs to
\(L_t^2(\Gamma)\). The nonlinear jump condition across the interface is then
incorporated through the boundary term involving
\(\mathbf g(\cdot,\gamma_T\mathbf E)\).

The weak formulation consists in finding $\mathbf{E} \in \mathbf{X}(\Omega)$ such that
\begin{equation}
\mathcal{A}^{\mathrm{tr}}(\mathbf{E}, \mathbf{V}) = \mathcal{F}^{\mathrm{tr}}(\mathbf{V}) \qquad \forall \mathbf{V} \in \mathbf{X}(\Omega),
\end{equation}
where the nonlinear form $\mathcal{A}^{\mathrm{tr}}$ is defined by
\begin{equation}
\begin{aligned}
\mathcal{A}^{\mathrm{tr}}(\mathbf{E}, \mathbf{V}) &:= \int_{\Omega} \mu_r^{-1} (\nabla \times \mathbf{E}) \cdot (\nabla \times \overline{\mathbf{V}}) \,\mathrm{d}\boldsymbol{x} - k^2 \int_{\Omega} \varepsilon_r \mathbf{E} \cdot \overline{\mathbf{V}} \,\mathrm{d}\boldsymbol{x} \\
&\quad - \int_{\Gamma} \mathbf{g}(\cdot, \gamma_T \mathbf{E}) \cdot \gamma_T \overline{\mathbf{V}} \,\mathrm{d}s + \mathrm{i}k \langle \Lambda \gamma_t \mathbf{E}, \gamma_T \mathbf{V} \rangle_{\Gamma_R}.
\end{aligned}
\end{equation}
For a perturbed interface $\Gamma_{\mathbf{h}}$, let $\hat{\mathbf{E}}_{\mathbf{h}} = J_\varphi^\top(\mathbf{E}_{\mathbf{h}} \circ \varphi_{\mathbf{h}})$ denote the covariant Piola pullback. Pulling the perturbed transmission problem back to the reference domain $\Omega$ yields the variational problem: find $\hat{\mathbf{E}}_{\mathbf{h}} \in \mathbf{X}(\Omega)$ such that
\begin{equation}
\widetilde{\mathcal{A}}^{\mathrm{tr}}_{\mathbf{h}}(\hat{\mathbf{E}}_{\mathbf{h}}, \mathbf{V}) = \mathcal{F}^{\mathrm{tr}}(\mathbf{V}) \qquad \forall \mathbf{V} \in \mathbf{X}(\Omega),
\end{equation}
with the transformed nonlinear form given by
\begin{equation}
\begin{aligned}
\widetilde{\mathcal{A}}^{\mathrm{tr}}_{\mathbf{h}}(\hat{\mathbf{E}}_{\mathbf{h}}, \mathbf{V}) &:= \int_{\Omega} \mu_r^{-1} (\nabla \times \hat{\mathbf{E}}_{\mathbf{h}})^\top \mathcal{M}_{\mathbf{h}} (\nabla \times \overline{\mathbf{V}}) \,\mathrm{d}\boldsymbol{x} - k^2 \int_{\Omega} \varepsilon_r \hat{\mathbf{E}}_{\mathbf{h}}^{\top} \mathcal{N}_{\mathbf{h}} \overline{\mathbf{V}} \,\mathrm{d}\boldsymbol{x} \\
&\quad - \int_{\Gamma} \mathbf{g}(\varphi_{\mathbf{h}}(\cdot), \gamma_{T,\mathbf{h}} \hat{\mathbf{E}}_{\mathbf{h}}) \cdot \gamma_{T,\mathbf{h}} \overline{\mathbf{V}} \, \omega_{\mathbf{h}} \,\mathrm{d}s + \mathrm{i}k \langle \Lambda \gamma_t \hat{\mathbf{E}}_{\mathbf{h}}, \gamma_T \mathbf{V} \rangle_{\Gamma_R}.
\end{aligned}
\end{equation}
Here, the material parameters $\mu_r$ and $\varepsilon_r$ are identified with their piecewise constant values on the reference domain. 

The transmission problem exhibits a variational structure essentially identical to that of the nonlinear impedance case, with the minor modification that the volume integrals are weighted by the bounded piecewise coefficients $\mu_r^{-1}$ and $\varepsilon_r$. Consequently, the linear perturbation estimates follow directly from Lemma~\ref{le:positive}, while the nonlinear interface terms are analyzed using the previously established transported-trace estimates and $L^2$-Lipschitz arguments.

\begin{prop}\label{prop:continuity_ntc}
Suppose that the nonlinear transmission problem is uniquely solvable for all admissible perturbations $\mathbf{h}$ with $\|\mathbf{h}\|_{C^1}$ sufficiently small, and that the associated linear transmission problem is well-posed in $\mathbf{X}(\Omega)$. Assume further that the nonlinear interface response $\mathbf{g}$ satisfies an $L^2$-Lipschitz condition analogous to the impedance case. If the Lipschitz constant $L_g$ is sufficiently small, then the following stability estimate holds:
\[
\|\hat{\mathbf{E}}_{\mathbf{h}} - \mathbf{E}\|_{\mathbf{X}(\Omega)} \le C \|\mathbf{h}\|_{C^1}.
\]
In particular, $\hat{\mathbf{E}}_{\mathbf{h}}$ converges to $\mathbf{E}$ in $H(\operatorname{curl}; \Omega)$ as $\|\mathbf{h}\|_{C^1} \to 0$.
\end{prop}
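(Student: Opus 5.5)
The argument repeats the proof of Theorem~\ref{th:continuity_domain_dependence} with the impedance term removed and the volume integrals weighted by $\mu_r^{-1}$ and $\varepsilon_r$. First, split $\mathcal A^{\rm tr}$ and $\widetilde{\mathcal A}^{\rm tr}_{\mathbf h}$ into linear parts $\mathcal A^{\rm tr}_l$, $\widetilde{\mathcal A}^{\rm tr}_{\mathbf h,l}$ and nonlinear interface parts $\mathcal A^{\rm tr}_n$, $\widetilde{\mathcal A}^{\rm tr}_{\mathbf h,n}$. Then, via the Riesz representation on $\mathbf X(\Omega)$, introduce the operators $T_l,T_{\mathbf h,l},T_n,T_{\mathbf h,n}$. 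Since $\mu_r^{-1}$ and $\varepsilon_r$ are bounded and piecewise constant on the fixed reference configuration, Lemma~\ref{le:positive} gives
\[
|\widetilde{\mathcal A}^{\rm tr}_{\mathbf h,l}(\mathbf w,\mathbf V)-\mathcal A^{\rm tr}_l(\mathbf w,\mathbf V)|\le C\|\mathbf h\|_{C^1}\|\mathbf w\|_{\mathbf X(\Omega)}\|\mathbf V\|_{\mathbf X(\Omega)},
\]
with $C$ depending on $\|\mu_r^{-1}\|_{L^\infty}$ and $\|\varepsilon_r\|_{L^\infty}$. The $\Gamma_R$ term is unchanged because $\mathbf h$ vanishes near $\Gamma_R$.

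By the assumed well-posedness of the linear transmission problem, $T_l$ is invertible with $\widetilde C:=\|T_l^{-1}\|$. For small $\|\mathbf h\|_{C^1}$ we have $\|T_l^{-1}(T_{\mathbf h,l}-T_l)\|\le 1/2$, and a Neumann series then gives $\|T_{\mathbf h,l}^{-1}\|\le 2\widetilde C$.

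Next, for the fixed reference solution $\mathbf E$, bound $\|(T_n-T_{\mathbf h,n})\mathbf E\|_{\mathbf X(\Omega)}$ by writing
\[
\mathbf g(\varphi_{\mathbf h},\gamma_{T,\mathbf h}\mathbf E)\,\gamma_{T,\mathbf h}\overline{\mathbf V}\,\omega_{\mathbf h}-\mathbf g(\cdot,\gamma_T\mathbf E)\,\gamma_T\overline{\mathbf V}
\]
as a telescoping sum, changing one factor at a time:
\begin{itemize}
\item the spatial argument, using the mean value theorem with $|\nabla_{\boldsymbol x}\mathbf g|\le\psi_1+c_1|\boldsymbol z|$ from (A1), which contributes $\|\mathbf h\|_{C^\infty}$ times an $L^2$ quantity, since $|\varphi_{\mathbf h}(\boldsymbol x)-\boldsymbol x|\le\|\mathbf h\|_{C^0}$;
\item the trace argument, using the $L^2$-Lipschitz bound together with $\|(P_{\mathbf h}J_\varphi^{-\top}-P_0)\gamma_T\mathbf E\|_{L^2}\le C\|\mathbf h\|_{C^1}\|\gamma_T\mathbf E\|_{L^2}$;
\item the test trace and the surface Jacobian, using $|\omega_{\mathbf h}-1|=O(\|\mathbf h\|_{C^1})$ and the growth bound $|\mathbf g|\le\psi_0+c_0|\boldsymbol z|$.
\end{itemize}
Altogether this gives $\|(T-T_{\mathbf h})\mathbf E\|\le C_E\|\mathbf h\|_{C^1}$.

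From $T_{\mathbf h}(\hat{\mathbf E}_{\mathbf h})=T(\mathbf E)$ we obtain the difference identity \eqref{eq:difference_identity} in the transmission setting. The nonlinear difference is controlled exactly as before:
\[
\|T_{\mathbf h,n}(\hat{\mathbf E}_{\mathbf h})-T_{\mathbf h,n}(\mathbf E)\|\le C_\gamma^2L_g\|\hat{\mathbf E}_{\mathbf h}-\mathbf E\|_{\mathbf X(\Omega)}.
\]
Here $C_\gamma$ bounds both the transported trace and $\omega_{\mathbf h}$ uniformly in $\mathbf h$. This yields
\[
(1-2\widetilde C C_\gamma^2L_g)\|\hat{\mathbf E}_{\mathbf h}-\mathbf E\|_{\mathbf X(\Omega)}\le 2\widetilde C C_E\|\mathbf h\|_{C^1}.
\]
The smallness assumption $2\widetilde CC_\gamma^2L_g<1$ gives the estimate, and the embedding $\mathbf X(\Omega)\hookrightarrow H(\operatorname{curl};\Omega)$ gives the convergence statement.

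The main obstacle is the uniformity of the Lipschitz bound under the pullback. The Lipschitz constant is assumed on $\Gamma$, but it is applied to $\mathbf g(\varphi_{\mathbf h}(\cdot),\cdot)$. I would handle this with (A4), which asserts that the pulled-back $L^2$ bounds hold with $\mathbf h$-independent constants. One more point needs checking: the transported trace $\gamma_{T,\mathbf h}\mathbf V$ must be the common tangential trace of the interior and exterior sides of the interface. This holds because the covariant Piola transform preserves $H(\operatorname{curl};B_R)$ globally, so tangential continuity across $\Gamma$ persists after pullback.
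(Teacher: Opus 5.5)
Your proposal is correct and takes the paper's route: the paper says only that the proof of Theorem~\ref{th:continuity_domain_dependence} carries over with the impedance term removed and the volume terms weighted by $\mu_r^{-1}$ and $\varepsilon_r$, and you carry this out step by step. That means the Riesz operators, the Neumann-series bound $\|T_{\mathbf h,l}^{-1}\|\le 2\widetilde C$, the fixed-field estimate at $\mathbf E$, and absorption of the nonlinear difference under $2\widetilde C C_\gamma^2L_g<1$; your telescoping argument just fills in the fixed-field bound the paper asserts in one line. Two minor slips: $\|\mathbf h\|_{C^\infty}$ should read $\|\mathbf h\|_{C^0}$, and if $C_\gamma$ also bounds $\omega_{\mathbf h}$ the nonlinear constant is $C_\gamma^3L_g$ rather than $C_\gamma^2L_g$, which only shifts the smallness threshold.
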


\begin{prop}\label{prop:material_ntc}
Suppose that the hypotheses of Proposition~\ref{prop:continuity_ntc} are satisfied. Assume further that the associated $\mathbb{R}$-linearized transmission problem is well-posed in $\mathbf{X}(\Omega)$ and that Assumption~\ref{ass:enhanced_g} holds in a tubular neighborhood of $\Gamma$. Then the pulled-back transmission solution is Fr\'{e}chet differentiable with respect to $\mathbf{h}$ at $\mathbf{h} = \mathbf{0}$. Specifically, there exists a unique material derivative $\mathbf{W} \in \mathbf{X}(\Omega)$, depending linearly and continuously on $\mathbf{h}$, such that
\[
\lim_{\|\mathbf{h}\|_{C^1} \to 0} \frac{\|\hat{\mathbf{E}}_{\mathbf{h}} - \mathbf{E} - \mathbf{W}\|_{\mathbf{X}(\Omega)}}{\|\mathbf{h}\|_{C^1}} = 0.
\]
The material derivative $\mathbf{W}$ is the unique solution to the variational problem
\[
\widehat{\mathcal{A}}^{\mathrm{tr}}(\mathbf{W}, \mathbf{V}) = \mathcal{L}_{\mathbf{h}}^{\mathrm{tr}}(\mathbf{V}) \qquad \forall \mathbf{V} \in \mathbf{X}(\Omega),
\]
where the $\mathbb{R}$-linear form $\widehat{\mathcal{A}}^{\mathrm{tr}}$ is defined by
\begin{equation}
\begin{aligned}
\widehat{\mathcal{A}}^{\mathrm{tr}}(\mathbf{W}, \mathbf{V}) &:= \int_{\Omega} \mu_r^{-1} (\nabla \times \mathbf{W}) \cdot (\nabla \times \overline{\mathbf{V}}) \,\mathrm{d}\boldsymbol{x} - k^2 \int_{\Omega} \varepsilon_r \mathbf{W} \cdot \overline{\mathbf{V}} \,\mathrm{d}\boldsymbol{x} \\
&\quad - \int_{\Gamma} \mathbf{g}_{\boldsymbol{z}}(\cdot, \gamma_T \mathbf{E}; \gamma_T \mathbf{W}) \cdot \gamma_T \overline{\mathbf{V}} \,\mathrm{d}s + \mathrm{i}k \langle \Lambda \gamma_t \mathbf{W}, \gamma_T \mathbf{V} \rangle_{\Gamma_R},
\end{aligned}
\end{equation}
and the right-hand side functional $\mathcal{L}_{\mathbf{h}}^{\mathrm{tr}} \in \mathbf{X}(\Omega)^*$ is given by
\begin{equation}
\begin{aligned}
\mathcal{L}_{\mathbf{h}}^{\mathrm{tr}}(\mathbf{V}) &:= -\int_{\Omega} \mu_r^{-1} (\nabla \times \mathbf{E})^\top \dot{\mathcal{M}}_{\mathbf{h}} (\nabla \times \overline{\mathbf{V}}) \,\mathrm{d}\boldsymbol{x} + k^2 \int_{\Omega} \varepsilon_r \mathbf{E}^\top \dot{\mathcal{N}}_{\mathbf{h}} \overline{\mathbf{V}} \,\mathrm{d}\boldsymbol{x} \\
&\quad + \int_{\Gamma} \left[ \nabla_{\boldsymbol{x}} \mathbf{g}(\cdot, \gamma_T \mathbf{E}) \mathbf{h} + \mathbf{g}_{\boldsymbol{z}}(\cdot, \gamma_T \mathbf{E}; \dot{\gamma}_{T,\mathbf{h}} \mathbf{E}) \right] \cdot \gamma_T \overline{\mathbf{V}} \,\mathrm{d}s \\
&\quad + \int_{\Gamma} \mathbf{g}(\cdot, \gamma_T \mathbf{E}) \cdot \dot{\gamma}_{T,\mathbf{h}} \overline{\mathbf{V}} \,\mathrm{d}s + \int_{\Gamma} \mathbf{g}(\cdot, \gamma_T \mathbf{E}) \cdot \gamma_T \overline{\mathbf{V}} \, \dot{\omega}_{\mathbf{h}} \,\mathrm{d}s,
\end{aligned}
\end{equation}
with the geometric variations defined as
\[
\dot{\mathcal{M}}_{\mathbf{h}} = -(\operatorname{div} \mathbf{h})\mathrm{Id} + J_{\mathbf{h}} + J_{\mathbf{h}}^{\top} \quad \text{and} \quad \dot{\mathcal{N}}_{\mathbf{h}} = (\operatorname{div} \mathbf{h})\mathrm{Id} - J_{\mathbf{h}} - J_{\mathbf{h}}^{\top}.
\]
\end{prop}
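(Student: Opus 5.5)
The argument mirrors the proof of Theorem~\ref{th:symmetrisation1}. The only changes are the piecewise constant weights $\mu_r^{-1}$ and $\varepsilon_r$ in the volume integrals and the absence of the linear impedance term on $\Gamma$.

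\emph{Step 1: setup.} Set $\Delta_{\mathbf h}:=\hat{\mathbf E}_{\mathbf h}-\mathbf E$. By Proposition~\ref{prop:continuity_ntc}, $\|\Delta_{\mathbf h}\|_{\mathbf X(\Omega)}=O(\|\mathbf h\|_{C^1})$. Since the source functional $\mathcal F^{\mathrm{tr}}$ is unaffected by $\mathbf h$ (which is supported away from $\Gamma_R$), we get the consistency identity
\[
\widetilde{\mathcal A}^{\mathrm{tr}}_{\mathbf h}(\hat{\mathbf E}_{\mathbf h},\mathbf V)-\mathcal A^{\mathrm{tr}}(\mathbf E,\mathbf V)=0 \qquad \forall\,\mathbf V\in\mathbf X(\Omega).
\]
We define $\mathbf W$ as the solution of $\widehat{\mathcal A}^{\mathrm{tr}}(\mathbf W,\cdot)=\mathcal L^{\mathrm{tr}}_{\mathbf h}$. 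This is well defined by the assumed well-posedness of the $\mathbb R$-linearized problem, and $\mathbf W$ is $\mathbb R$-linear and continuous in $\mathbf h$, because each term of $\mathcal L^{\mathrm{tr}}_{\mathbf h}$ is linear in $\mathbf h$ and bounded by $C\|\mathbf h\|_{C^1}\|\mathbf V\|_{\mathbf X(\Omega)}$. For that bound we use Lemma~\ref{le:positive}, the estimate on $\dot\gamma_{T,\mathbf h}$, and the growth bounds in (A1)--(A2).

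\emph{Step 2: error equation.} For $\mathbf R_{\mathbf h}:=\Delta_{\mathbf h}-\mathbf W$ we obtain $\widehat{\mathcal A}^{\mathrm{tr}}(\mathbf R_{\mathbf h},\mathbf V)=r^\Omega_{\mathbf h}(\mathbf V)+r^g_{\mathbf h}(\mathbf V)$. The Calder\'on term cancels exactly because it is linear and independent of $\mathbf h$.

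The volume residual is
\[
\int_\Omega\mu_r^{-1}\Bigl[(\nabla\times\Delta_{\mathbf h})^\top(\mathcal M_{\mathbf h}-\mathrm{Id})+(\nabla\times\mathbf E)^\top(\mathcal M_{\mathbf h}-\mathrm{Id}-\dot{\mathcal M}_{\mathbf h})\Bigr]\nabla\times\overline{\mathbf V}\,\mathrm d\boldsymbol x ,
\]
plus the analogous $\varepsilon_r$-weighted $\mathcal N_{\mathbf h}$ term. Since $\mu_r^{-1},\varepsilon_r\in L^\infty$, Lemma~\ref{le:positive} and the $O(\|\mathbf h\|)$ bound on $\Delta_{\mathbf h}$ give $\|r^\Omega_{\mathbf h}\|_{\mathbf X(\Omega)^*}=o(\|\mathbf h\|_{C^1})$, exactly as in the impedance case.

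The interface residual is the perturbed integral $\int_\Gamma \mathbf g(\varphi_{\mathbf h},\gamma_{T,\mathbf h}\hat{\mathbf E}_{\mathbf h})\cdot\gamma_{T,\mathbf h}\overline{\mathbf V}\,\omega_{\mathbf h}\,\mathrm ds$ minus its reference value and all first-order terms. To control it, write $\gamma_{T,\mathbf h}\hat{\mathbf E}_{\mathbf h}=\gamma_T\mathbf E+\gamma_T\Delta_{\mathbf h}+\dot\gamma_{T,\mathbf h}\mathbf E+\boldsymbol\eta_{\mathbf h}$ with $\|\boldsymbol\eta_{\mathbf h}\|_{L^2}=o(\|\mathbf h\|)$. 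Then split the Nemytskii increment as
\[
\mathbf g(\varphi_{\mathbf h},\boldsymbol\zeta_{\mathbf h})-\mathbf g(\cdot,\gamma_T\mathbf E)=\bigl[\mathbf g(\varphi_{\mathbf h},\boldsymbol\zeta_{\mathbf h})-\mathbf g(\cdot,\boldsymbol\zeta_{\mathbf h})\bigr]+\bigl[\mathbf g(\cdot,\boldsymbol\zeta_{\mathbf h})-\mathbf g(\cdot,\gamma_T\mathbf E)\bigr].
\]
For the first bracket, a mean-value argument in $\boldsymbol x$ together with the bound on $\nabla_{\boldsymbol x}\mathbf g$ in (A1) and the $\mathbf g_{\boldsymbol x\boldsymbol z}$ expansion in (A3) (uniform by (A4)) gives $\nabla_{\boldsymbol x}\mathbf g(\cdot,\gamma_T\mathbf E)\mathbf h+o(\|\mathbf h\|)$ in $L^2(\Gamma)$. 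For the second bracket, (A3) gives $\mathbf g_{\boldsymbol z}(\cdot,\gamma_T\mathbf E;\gamma_T\Delta_{\mathbf h}+\dot\gamma_{T,\mathbf h}\mathbf E)+o(\|\mathbf h\|)$. Here we use the $\mathbb R$-linearity of $\mathbf g_{\boldsymbol z}$ and its bound $C_g|\boldsymbol w|$ to absorb $\boldsymbol\eta_{\mathbf h}$. Multiplying out against $\gamma_{T,\mathbf h}\overline{\mathbf V}\,\omega_{\mathbf h}$ and using the expansions of these factors, the products of two first-order quantities are $O(\|\mathbf h\|^2)$. We conclude $\|r^g_{\mathbf h}\|_{\mathbf X(\Omega)^*}=o(\|\mathbf h\|_{C^1})$.

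\emph{Step 3: conclusion and main obstacle.} The isomorphism property of $\widehat{\mathcal A}^{\mathrm{tr}}$ gives $\|\mathbf R_{\mathbf h}\|_{\mathbf X(\Omega)}\le C\|r^\Omega_{\mathbf h}+r^g_{\mathbf h}\|_{\mathbf X(\Omega)^*}=o(\|\mathbf h\|_{C^1})$, which is the claim. Uniqueness of $\mathbf W$ follows because a Fr\'echet derivative is unique.

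The delicate step is the interface residual, specifically the joint variation in $\boldsymbol x$ (through $\varphi_{\mathbf h}$) and in the trace argument. The remainder must be $o(\|\mathbf h\|)$ in $L^2(\Gamma)$ uniformly for arguments in a bounded set, and this is where (A3)--(A4) are essential. A secondary point is that the transported-trace expansion must hold with remainder $o(\|\mathbf h\|)\|\mathbf U\|_{\mathbf X(\Omega)}$ uniformly in $\mathbf U$, so that the remainder is also small in the dual norm. This follows because $P_{\mathbf h}J_\varphi^{-\top}$ is a $C^0$-multiplier on $\gamma_T\mathbf U$ that is differentiable in $\mathbf h$.
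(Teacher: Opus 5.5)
Your proposal is correct and follows the paper's route. The paper proves this proposition only by remarking that the argument is identical to that of Theorem~\ref{th:symmetrisation1}: a consistency identity, an error equation for $\mathbf R_{\mathbf h}=\hat{\mathbf E}_{\mathbf h}-\mathbf E-\mathbf W$ with $o(\|\mathbf h\|_{C^1})$ volume and interface residuals, and inversion of the $\mathbb R$-linearized operator, with $\mu_r^{-1},\varepsilon_r$ affecting only constants. Your splitting of the Nemytskii increment into an $\boldsymbol x$-variation and a trace-variation, and your remark on the uniformity in $\mathbf U$ of the transported-trace remainder, make explicit steps that the paper leaves implicit in Assumption~\ref{ass:enhanced_g}.
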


The proofs of Propositions~\ref{prop:continuity_ntc} and \ref{prop:material_ntc} 
proceed identically to the impedance case. Indeed, the bounded piecewise material 
parameters $\mu_r^{-1}$ and $\varepsilon_r$ only affect the specific constants 
in the volume residuals, while the nonlinear interface contributions are treated 
via the previously established $L^2$-Lipschitz and Nemytskii expansion arguments.

\section{Characterization of the Shape Derivative}
\label{sec:4}

The material derivative characterizes the first-order variation of the pulled-back field on the fixed reference domain. The shape derivative is subsequently recovered by accounting for the convective and geometric contributions induced by the domain transformation. For Maxwell fields, where the pullback is defined via the covariant Piola transform to preserve the $H(\mathrm{curl})$-structure, the first-order expansion for sufficiently smooth fields is given by
\begin{equation}
\label{eq:piola_expansion}
\mathbf{W} = J_{\mathbf{h}}^{\top} \mathbf{E} + J_{\mathbf{E}}\mathbf{h} + \delta \mathbf{E},
\end{equation}
where $J_{\mathbf{E}}\mathbf{h} := (\mathbf{h} \cdot \nabla) \mathbf{E}$. Here, $\mathbf{W}$ denotes the material derivative associated with the Piola pullback, while $\delta \mathbf{E}$ represents the shape derivative.

The boundary characterizations derived in the sequel are initially formulated for smooth fields and subsequently interpreted in the sense of weak trace duality. These expressions exhibit the Hadamard structure: the final boundary data depend exclusively on the normal component $h_n := \mathbf{h} \cdot \mathbf{n}$ of the deformation field. Tangential perturbations, which correspond to local reparametrizations of the boundary, are shown to vanish upon surface integration by parts.

Throughout this section, we impose additional regularity assumptions to provide strong characterizations of the shape derivative. Specifically, we assume the boundary $\Gamma$ is of class $C^2$. Let $U_\Gamma$ denote a local neighborhood of $\Gamma$ in the domain $\Omega$. For the impenetrable scattering problems (NIBC and NPEC), we assume that the reference electric field is locally $H^2$-regular near the boundary, i.e., $\mathbf{E} \in H^2(U_\Gamma; \mathbb{C}^3)$.

For the transmission problem, we assume piecewise regularity on each side of the interface. Specifically, let $U_\Gamma^+$ and $U_\Gamma^-$ denote one-sided tubular neighborhoods of $\Gamma$ in the exterior and interior subdomains, respectively; we then assume that
\[
\mathbf{E}^+ \in H^2(U_\Gamma^+; \mathbb{C}^3) \quad \text{and} \quad \mathbf{E}^- \in H^2(U_\Gamma^-; \mathbb{C}^3).
\]
These regularity assumptions ensure that the normal derivatives, surface differential operators, and nonlinear linearizations appearing in the sequel are well-defined. In the impedance case, this local $H^2$-regularity is consistent with established results for Maxwell impedance problems on $C^2$ domains provided the effective impedance data is sufficiently regular; see, e.g., \cite{chen2025regularity}. For the NPEC and NTC cases, the required regularity is imposed \textit{a priori}.

Under the additional regularity assumptions imposed in this section, the weak
first-order trace variation introduced in Section~\ref{sec:3} admits the
following pointwise representation for sufficiently smooth vector fields:
\begin{equation}\label{eq:trace variation}
\dot{\gamma}_{T,\mathbf h}\mathbf U
=
\delta\mathbf n\times(\mathbf U\times\mathbf n)
+
\mathbf n\times(\mathbf U\times\delta\mathbf n)
-
\mathbf n\times(J_{\mathbf h}^{\top}\mathbf U\times\mathbf n).
\end{equation}

We now use this pointwise trace representation, together with the material
derivative equation derived in Section~\ref{sec:3}, to extract the strong
boundary condition satisfied by the shape derivative. We first consider the
nonlinear impedance boundary condition.
\begin{theo}\label{th:shape_derivative_nibc}
The material derivative $\mathbf{W}$ obtained in Theorem~\ref{th:symmetrisation1} is related to the shape derivative $\delta\mathbf{E}$ via the decomposition
\begin{equation}
\label{eq:material_derivative_decomposition}
\mathbf{W} = J_{\mathbf{h}}^{\top}\mathbf{E} + (\mathbf{h} \cdot \nabla) \mathbf{E} + \delta\mathbf{E}.
\end{equation}
The shape derivative $\delta\mathbf{E}$ is characterized as the unique weak solution to the $\mathbb{R}$-linear boundary value problem
\begin{equation}
\nabla \times (\nabla \times \delta\mathbf{E}) - k^2 \delta\mathbf{E} = 0 \quad \text{in } \Omega,
\end{equation}
subject to the following boundary condition on $\Gamma$:
\begin{equation}
\label{eq:shape1}
\begin{aligned}
&\mathbf{n} \times (\nabla \times \delta\mathbf{E}) + \mathrm{i}k\lambda\,\gamma_T \delta\mathbf{E} - \mathbf{g}_{\boldsymbol{z}}\bigl(\boldsymbol{x}, \gamma_T \mathbf{E}; \gamma_T \delta\mathbf{E}\bigr) \\
&\quad = -\operatorname{Curl}_\Gamma \bigl( h_n \operatorname{curl}_\Gamma(\gamma_T \mathbf{E}) \bigr) - h_n k^2 \gamma_T \mathbf{E} \\
&\qquad - \mathrm{i}k\lambda \Bigl[ (\mathbf{E} \cdot \mathbf{n}) \nabla_\Gamma h_n + h_n \Bigl( \frac{\partial}{\partial \mathbf{n}}(\gamma_T \mathbf{E}) - \mathcal{S}(\gamma_T \mathbf{E}) \Bigr) \Bigr] \\
&\qquad - \mathrm{i}k\lambda h_n \mathfrak{H} (\gamma_T \mathbf{E}) + h_n \Bigl( \frac{\partial}{\partial \mathbf{n}} \mathbf{g}(\boldsymbol{x}, \gamma_T \mathbf{E}) - \mathcal{S}\bigl( \mathbf{g}(\boldsymbol{x}, \gamma_T \mathbf{E}) \bigr) \Bigr) \\
&\qquad + h_n \mathfrak{H} \mathbf{g}(\boldsymbol{x}, \gamma_T \mathbf{E}) + \mathbf{g}_{\boldsymbol{z}} \bigl( \boldsymbol{x}, \gamma_T \mathbf{E}; (\mathbf{E} \cdot \mathbf{n}) \nabla_\Gamma h_n \bigr).
\end{aligned}
\end{equation}
Here, $\mathcal{S}$ denotes the shape operator (or Weingarten map), defined by $\mathcal{S} \boldsymbol{\tau} := (J_{\mathbf{n}}) \boldsymbol{\tau}$ for any tangential vector field $\boldsymbol{\tau}$ on $\Gamma$. Under this convention, the additive  curvature is given by $\mathfrak{H} := \operatorname{tr} \mathcal{S} = \operatorname{div}_\Gamma \mathbf{n} = 2\kappa$. 

On the artificial boundary $\Gamma_R$, the field $\delta\mathbf{E}$ satisfies the radiation condition
\begin{equation}
\label{eq:silver_muller_radiation}
\mathrm{i}k \Lambda(\gamma_t \delta\mathbf{E}) = \gamma_t(\nabla \times \delta\mathbf{E}) \quad \text{on } \Gamma_R.
\end{equation}
Since the incident field $\mathbf{E}^i$ is domain-independent, its shape derivative vanishes, and thus $\delta\mathbf{E} = \delta(\mathbf{E} - \mathbf{E}^i)$ represents the shape derivative of the scattered field. This condition ensures that $\delta\mathbf{E}$ uniquely extends to a radiating weak solution in the exterior domain $\mathbb{R}^3 \setminus \overline{D}$.
\end{theo}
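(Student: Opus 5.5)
We take the decomposition \eqref{eq:material_derivative_decomposition} as the definition of $\delta\mathbf E$: set $\delta\mathbf E:=\mathbf W-J_{\mathbf h}^{\top}\mathbf E-(\mathbf h\cdot\nabla)\mathbf E$. The local $H^2$-regularity of $\mathbf E$ near $\Gamma$ guarantees that $\delta\mathbf E\in H(\operatorname{curl};\Omega)$ with an $L^2$ tangential trace, so $\delta\mathbf E\in\mathbf X(\Omega)$. Since $\mathbf h$ is supported away from $\Gamma_R$, we have $\delta\mathbf E=\mathbf W$ near $\Gamma_R$. The strategy is to insert this decomposition into the material derivative equation $\widehat{\mathcal A}(\mathbf W,\mathbf V)=\mathcal L_{\mathbf h}(\mathbf V)$ of Theorem~\ref{th:symmetrisation1}. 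We then move every term involving $\mathbf E$ to the right-hand side and integrate by parts, using that $\mathbf E$ solves \eqref{eq:exterior_equation} strongly near $\Gamma$ together with \eqref{eq:NIBC}.

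\textbf{Step 1: the volume equation.} We first test with $\mathbf V\in C_c^\infty(\Omega)^3$, so that all boundary terms vanish. A direct computation gives $\nabla\times(J_{\mathbf h}^{\top}\mathbf E+(\mathbf h\cdot\nabla)\mathbf E)=\nabla\times(\nabla(\mathbf h\cdot\mathbf E)-\,\text{correction})$. More precisely, the field $\mathbf Z:=J_{\mathbf h}^{\top}\mathbf E+(\mathbf h\cdot\nabla)\mathbf E$ is the Lie derivative of the $1$-form $\mathbf E$ along $\mathbf h$, namely $\mathbf Z=\nabla(\mathbf h\cdot\mathbf E)+(\nabla\times\mathbf E)\times\mathbf h$. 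The bulk part of $\mathcal L_{\mathbf h}$, namely the $\dot{\mathcal M}_{\mathbf h}$ and $\dot{\mathcal N}_{\mathbf h}$ terms, is exactly the first variation of the transported curl--curl form. By the classical identity from \cite{Hettlich2012}, for compactly supported $\mathbf V$ it equals $\int_\Omega[(\nabla\times\mathbf Z)\cdot\nabla\times\overline{\mathbf V}-k^2\mathbf Z\cdot\overline{\mathbf V}]$ plus terms that vanish because $\nabla\times\nabla\times\mathbf E=k^2\mathbf E$. Hence $\nabla\times\nabla\times\delta\mathbf E-k^2\delta\mathbf E=0$ in $\Omega$ in the distributional sense. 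The condition \eqref{eq:silver_muller_radiation} follows immediately, because $\mathbf W$ satisfies the $\Lambda$-term of $\widehat{\mathcal A}$ with no source on $\Gamma_R$.

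\textbf{Step 2: the boundary condition on $\Gamma$.} Now take general $\mathbf V\in\mathbf X(\Omega)$ and apply Green's formula for $\delta\mathbf E$. What remains is an identity on $\Gamma$ relating $\mathbf n\times\nabla\times\delta\mathbf E+\mathrm ik\lambda\gamma_T\delta\mathbf E-\mathbf g_{\boldsymbol z}(\cdot,\gamma_T\mathbf E;\gamma_T\delta\mathbf E)$, paired against $\gamma_T\overline{\mathbf V}$, to a collection of boundary terms. These come from three sources:
\begin{itemize}
\item[(i)] the boundary flux of the Lie-derivative identity, i.e.\ Green's formula applied to $\mathbf Z$;
\item[(ii)] the impedance and $\mathbf g$ terms of $\widehat{\mathcal A}$ evaluated at $\mathbf Z$, via $\gamma_T\mathbf Z$;
\item[(iii)] the surface terms of $\mathcal L_{\mathbf h}$, which involve $\dot\gamma_{T,\mathbf h}$, $\dot\omega_{\mathbf h}$ and $\nabla_{\boldsymbol x}\mathbf g\,\mathbf h$.
\end{itemize}
To evaluate them we use \eqref{eq:trace variation} together with \eqref{eq:normal_shape_derivative} for $\delta\mathbf n$, the expansion \eqref{eq:surface_jacobian_expansion}, and the $\mathbb R$-linearity of $\mathbf g_{\boldsymbol z}$. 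We then split $\mathbf h=\mathbf h_T+h_n\mathbf n$ on $\Gamma$ and decompose $\nabla\mathbf E$ into normal and tangential parts. Along the way we replace $\mathbf n\times\nabla\times\mathbf E$ by $\mathbf g-\mathrm ik\lambda\gamma_T\mathbf E$, and we express $\mathbf n\cdot\nabla\times\mathbf E=\operatorname{curl}_\Gamma\gamma_T\mathbf E$. The term $\nabla(\mathbf h\cdot\mathbf E)$, paired with $\nabla\times\overline{\mathbf V}$, contributes only through its tangential gradient, which produces $\operatorname{Curl}_\Gamma(h_n\operatorname{curl}_\Gamma\gamma_T\mathbf E)$ after surface integration by parts. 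The term $-h_nk^2\gamma_T\mathbf E$ comes from the $-k^2$ mass part together with $\mathbf n\cdot\nabla\times\nabla\times\mathbf E=k^2\mathbf n\cdot\mathbf E$.

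\textbf{Step 3: Hadamard structure, the main obstacle.} The main obstacle is verifying that every term containing $\mathbf h_T$ cancels. These terms arise from $\delta\mathbf n=J_{\mathbf n}\mathbf h_T$, from $\operatorname{div}_\Gamma\mathbf h_T$ in $\dot\omega_{\mathbf h}$, from $\mathbf h_T\cdot\nabla_\Gamma$ acting on $\mathbf E$ and on $\mathbf g(\boldsymbol x,\cdot)$, and from the tangential parts of $J_{\mathbf h}^{\top}$. The plan is to group them into a total tangential derivative $\int_\Gamma\operatorname{div}_\Gamma\bigl(\mathbf h_T\,\Phi\bigr)\,ds$, where $\Phi=(\mathrm ik\lambda\gamma_T\mathbf E-\mathbf g(\cdot,\gamma_T\mathbf E))\cdot\gamma_T\overline{\mathbf V}$. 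Such a term vanishes on the closed surface $\Gamma$, reflecting the fact that tangential fields only reparametrize $\Gamma$. Doing this requires the chain rule $\nabla_\Gamma[\mathbf g(\boldsymbol x,\gamma_T\mathbf E)]=\nabla_{\boldsymbol x}\mathbf g+\mathbf g_{\boldsymbol z}(\cdot;\nabla_\Gamma\gamma_T\mathbf E)$, which holds by Assumption~\ref{ass:enhanced_g}. We will check this cancellation carefully, term by term.

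The remaining $h_n$-terms assemble into the right-hand side of \eqref{eq:shape1}:
\begin{itemize}
\item the normal derivative of $\gamma_T\mathbf E$ and of $\mathbf g$, corrected by the Weingarten term $\mathcal S$ coming from $-\nabla_\Gamma h_n$ in $\delta\mathbf n$ and from $\mathbf n\times(\cdot\times\delta\mathbf n)$;
\item the curvature term $\mathfrak H h_n$ from $\dot\omega_{\mathbf h}$;
\item the term $(\mathbf E\cdot\mathbf n)\nabla_\Gamma h_n$ from \eqref{eq:trace variation}, which enters both linearly through $\lambda$ and through $\mathbf g_{\boldsymbol z}$.
\end{itemize}

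Uniqueness of $\delta\mathbf E$ follows from the assumed well-posedness of the $\mathbb R$-linearized problem $\widehat{\mathcal A}$. Finally, by density of smooth test traces, the weak identity yields \eqref{eq:shape1} in the trace-duality sense.
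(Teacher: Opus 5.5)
Your overall strategy is the paper's. You set $\delta\mathbf E=\mathbf W-\mathbf Z$ with $\mathbf Z=J_{\mathbf h}^{\top}\mathbf E+(\mathbf h\cdot\nabla)\mathbf E=\nabla(\mathbf h\cdot\mathbf E)+(\nabla\times\mathbf E)\times\mathbf h$, evaluate $\mathcal L_{\mathbf h}(\mathbf V)-\widehat{\mathcal A}(\mathbf Z,\mathbf V)$, turn the volume part into a flux over $\Gamma$, split $\mathbf h=\mathbf h_T+h_n\mathbf n$, and remove the $\mathbf h_T$-terms by surface integration by parts. Step 1 and the $\Gamma_R$ condition are fine.

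The problem is the bookkeeping you commit to in Steps 2--3, which is where the real work lies. What gets integrated by parts is not $\mathbf Z$ alone, since Green's formula for $\mathbf Z$ would need second derivatives of $\mathbf h$. It is the bulk part of $\mathcal L_{\mathbf h}$ combined with the volume part of $\widehat{\mathcal A}(\mathbf Z,\cdot)$. Put $\mathbf C=\nabla\times\mathbf E$ and use $\nabla\times\mathbf C=k^2\mathbf E$, $\operatorname{div}\mathbf E=0$. Then $\dot{\mathcal N}_{\mathbf h}\mathbf C-\nabla\times\mathbf Z=-\nabla(\mathbf h\cdot\mathbf C)-k^2\mathbf E\times\mathbf h$ and $\dot{\mathcal N}_{\mathbf h}\mathbf E+\mathbf Z=\nabla\times(\mathbf E\times\mathbf h)$. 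With $\mathbf n$ pointing into $\Omega$, the volume residual is
\[
\mathscr V=\int_\Gamma\Bigl[(\mathbf h\cdot\nabla\times\mathbf E)(\mathbf n\cdot\nabla\times\overline{\mathbf V})+k^2\bigl(E_n\mathbf h_T-h_n\gamma_T\mathbf E\bigr)\cdot\overline{\mathbf V}\Bigr]\,\mathrm{d}s .
\]
So $\operatorname{Curl}_\Gamma(h_n\operatorname{curl}_\Gamma\gamma_T\mathbf E)$ comes from $\nabla(\mathbf h\cdot\nabla\times\mathbf E)$. It does not come from $\nabla(\mathbf h\cdot\mathbf E)$, which is curl-free and never pairs with $\nabla\times\overline{\mathbf V}$. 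Likewise $-k^2h_n\gamma_T\mathbf E$ comes straight from the second term, with no use of $\mathbf n\cdot\nabla\times\nabla\times\mathbf E=k^2E_n$.

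This is why Step 3 does not close as you propose it. Your list of tangential sources omits the $\mathbf h_T$-part of $\mathscr V$, namely $(\mathbf h_T\cdot\nabla\times\mathbf E)(\mathbf n\cdot\nabla\times\overline{\mathbf V})+k^2E_n\mathbf h_T\cdot\overline{\mathbf V}$. Moreover, the $\mathbf h_T$-content is not a single divergence $\operatorname{div}_\Gamma(\mathbf h_T\Phi)$. Write $\mathbf b:=\mathbf n\times(\nabla\times\mathbf E)=\mathbf g(\cdot,\gamma_T\mathbf E)-\mathrm ik\lambda\gamma_T\mathbf E$ and $\mathbf w:=\gamma_T\overline{\mathbf V}$, so your $\Phi=-\mathbf b\cdot\mathbf w$. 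The impedance and nonlinear boundary terms contribute $[\mathbf b\operatorname{div}_\Gamma\mathbf h_T-(\nabla_\Gamma\mathbf h_T)\mathbf b+(\nabla_\Gamma\mathbf b)\mathbf h_T]\cdot\mathbf w$. This differs from $\operatorname{div}_\Gamma(\mathbf h_T(\mathbf b\cdot\mathbf w))$ by $-((\nabla_\Gamma\mathbf h_T)\mathbf b)\cdot\mathbf w-((\nabla_\Gamma\mathbf w)\mathbf h_T)\cdot\mathbf b$.

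To dispose of this remainder and of the volume-induced terms, you need three further ingredients, all used in the paper:
(a) a second surface-divergence identity, for $\mathbf b(\mathbf h_T\cdot\mathbf w)$;
(b) the normal trace of Maxwell's equation combined with the NIBC, $\operatorname{div}_\Gamma\mathbf b=-\mathbf n\cdot\nabla\times\nabla\times\mathbf E=-k^2E_n$, which cancels $k^2E_n\mathbf h_T\cdot\mathbf w$;
(c) the fact that the antisymmetric part of $\nabla_\Gamma\mathbf w$ is $\operatorname{curl}_\Gamma\mathbf w=\mathbf n\cdot\nabla\times\overline{\mathbf V}$. This gives $((\nabla_\Gamma\mathbf w)\mathbf b)\cdot\mathbf h_T-((\nabla_\Gamma\mathbf w)\mathbf h_T)\cdot\mathbf b=-(\mathbf h_T\cdot\nabla\times\mathbf E)(\mathbf n\cdot\nabla\times\overline{\mathbf V})$, which cancels the remaining volume-induced term.

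Only then does the $\mathbf h_T$-content reduce to $\operatorname{div}_\Gamma(\mathbf h_T(\mathbf b\cdot\mathbf w))-\operatorname{div}_\Gamma(\mathbf b(\mathbf h_T\cdot\mathbf w))$ and integrate to zero on the closed surface. You had (b) in view but attached it to the $h_n$-term. Ingredient (c), which couples the volume flux to the boundary terms, is absent. Without it the Hadamard structure, and hence the stated boundary condition on $\Gamma$, is not established.
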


\begin{proof}
Under the regularity assumptions introduced at the beginning of this section, the following differential operations are well-defined. We define the shape derivative $\delta\mathbf{E}$ in $\Omega$ by the relation
\[
\delta\mathbf{E} := \mathbf{W} - J_{\mathbf{h}}^{\top}\mathbf{E} - (\mathbf{h} \cdot \nabla) \mathbf{E}.
\]

Using the linearized variational formulation \eqref{eq:linearized_weak_form}
and the material derivative equation \eqref{eq:material_derivative}, and
substituting the pointwise trace variation formula \eqref{eq:trace variation},
we obtain the following identity for the shape derivative:
\begin{equation}
\label{eq:shape_derivative}
\begin{aligned}
\widehat{\mathcal{A}}(\delta \mathbf{E}, \mathbf{V}) &= \widehat{\mathcal{A}}(\mathbf{W}, \mathbf{V}) - \widehat{\mathcal{A}}(J_{\mathbf{h}}^{\top}\mathbf{E} + (\mathbf{h} \cdot \nabla) \mathbf{E}, \mathbf{V}) \\
&= \int_\Omega \Bigl[ (\nabla \times \mathbf{E})^{\top} \bigl( (\operatorname{div} \mathbf{h})\mathrm{Id} - J_{\mathbf{h}} - J_{\mathbf{h}}^{\top} \bigr) \nabla \times \overline{\mathbf{V}} \\
&\qquad \quad + k^2 \mathbf{E}^{\top} \bigl( (\operatorname{div} \mathbf{h})\mathrm{Id} - J_{\mathbf{h}} - J_{\mathbf{h}}^{\top} \bigr) \overline{\mathbf{V}} \Bigr] \, \mathrm{d}\boldsymbol{x} \\
&\quad + \mathrm{i}k\lambda \int_\Gamma (\gamma_T\mathbf{E})^{\top} \bigl( -\operatorname{div}_\Gamma \mathbf{h}_T - 2\kappa h_n \bigr) \gamma_T\overline{\mathbf{V}} \, \mathrm{d}s \\
&\quad + \mathrm{i}k\lambda \int_\Gamma \Bigl( -\delta\mathbf{n} \times (\mathbf{E} \times \mathbf{n}) - \mathbf{n} \times (\mathbf{E} \times \delta \mathbf{n}) + \mathbf{n} \times \bigl( (J_{\mathbf{h}}^{\top}\mathbf{E}) \times \mathbf{n} \bigr) \Bigr)^{\top} \gamma_T\overline{\mathbf{V}} \, \mathrm{d}s \\
&\quad + \mathrm{i}k\lambda \int_\Gamma \Bigl[ (J_{\mathbf{h}}\gamma_T\mathbf{E})^{\top} \gamma_T\overline{\mathbf{V}} - (\gamma_T\mathbf{E})^{\top} \bigl( \mathbf{n} \times (\gamma_T\overline{\mathbf{V}} \times \delta \mathbf{n}) \bigr) \Bigr] \, \mathrm{d}s \\
&\quad - \int_\Gamma \Bigl[ \mathbf{g}(\cdot, \gamma_T \mathbf{E})^{\top} \bigl( -\operatorname{div}_\Gamma \mathbf{h}_T - 2\kappa h_n \bigr) - \nabla_{\boldsymbol{x}}\mathbf{g}(\cdot, \gamma_T \mathbf{E})\mathbf{h} \\
&\qquad \quad - \mathbf{g}_{\boldsymbol{z}} \Bigl( \cdot, \gamma_T \mathbf{E}; \delta\mathbf{n} \times (\mathbf{E} \times \mathbf{n}) + \mathbf{n} \times (\mathbf{E} \times \delta \mathbf{n}) - \mathbf{n} \times \bigl( (J_{\mathbf{h}}^{\top}\mathbf{E}) \times \mathbf{n} \bigr) \Bigr) \Bigr]^{\top} \gamma_T\overline{\mathbf{V}} \, \mathrm{d}s \\
&\quad - \int_\Gamma \Bigl[ (J_{\mathbf{h}}\mathbf{g}(\cdot, \gamma_T \mathbf{E}))^{\top} \gamma_T\overline{\mathbf{V}} - \mathbf{g}(\cdot, \gamma_T \mathbf{E})^{\top} \bigl( \mathbf{n} \times (\gamma_T\overline{\mathbf{V}} \times \delta \mathbf{n}) \bigr) \Bigr] \, \mathrm{d}s \\
&\quad - \int_\Omega \Bigl[ (\nabla \times (J_{\mathbf{h}}^{\top}\mathbf{E} + (\mathbf{h} \cdot \nabla) \mathbf{E}))^{\top} \nabla \times \overline{\mathbf{V}} - k^2 (J_{\mathbf{h}}^{\top}\mathbf{E} + (\mathbf{h} \cdot \nabla) \mathbf{E})^{\top} \overline{\mathbf{V}} \Bigr] \, \mathrm{d}\boldsymbol{x} \\
&\quad - \mathrm{i}k\lambda \int_\Gamma \bigl[ \mathbf{n} \times ((J_{\mathbf{h}}^{\top}\mathbf{E} + (\mathbf{h} \cdot \nabla) \mathbf{E}) \times \mathbf{n}) \bigr] \cdot \gamma_T\overline{\mathbf{V}} \, \mathrm{d}s \\
&\quad + \int_\Gamma \mathbf{g}_{\boldsymbol{z}} \bigl( \cdot, \gamma_T \mathbf{E}; \gamma_T(J_{\mathbf{h}}^{\top}\mathbf{E} + (\mathbf{h} \cdot \nabla) \mathbf{E}) \bigr)^{\top} \gamma_T\overline{\mathbf{V}} \, \mathrm{d}s \\
&\quad - \mathrm{i}k \langle \Lambda \gamma_t (J_{\mathbf{h}}^{\top}\mathbf{E} + (\mathbf{h} \cdot \nabla) \mathbf{E}), \gamma_T \mathbf{V} \rangle_{\Gamma_R}.
\end{aligned}
\end{equation}

Since $\operatorname{supp} \mathbf{h} \Subset B_R$, both the deformation field $\mathbf{h}$ and its Jacobian $J_{\mathbf{h}}$ vanish identically in a neighborhood of the artificial boundary $\Gamma_R$. It follows that
\[
\gamma_t(J_{\mathbf{h}}^{\top}\mathbf{E} + (\mathbf{h} \cdot \nabla) \mathbf{E}) = 0 \qquad \text{on } \Gamma_R,
\]
which implies the vanishing of the associated Calder\'on boundary term in \eqref{eq:shape_derivative}.

We partition the variational residual into a volume contribution $\mathscr V$, an impedance boundary term $I_1$, and a nonlinear boundary term $I_2$, such that
\[
\widehat{\mathcal{A}}(\delta\mathbf{E}, \mathbf{V}) = \mathscr V + I_1 + I_2.
\]
The boundary contributions are characterized by the following integrals:
\begin{align*}
I_1 &:= \mathrm{i}k\lambda \int_\Gamma \Bigl[ \bigl( -\operatorname{div}_\Gamma \mathbf{h}_T - 2\kappa h_n \bigr) \gamma_T \mathbf{E} - \delta\mathbf{n} \times (\mathbf{E} \times \mathbf{n}) \\
&\qquad \qquad - \mathbf{n} \times (\mathbf{E} \times \delta\mathbf{n}) - \mathbf{n} \times \bigl( ((\mathbf{h} \cdot \nabla)\mathbf{E}) \times \mathbf{n} \bigr) \Bigr]^{\top} \gamma_T \overline{\mathbf{V}} \,\mathrm{d}s \\
&\quad + \mathrm{i}k\lambda \int_\Gamma \Bigl[ (J_{\mathbf{h}}\gamma_T \mathbf{E})^{\top} \gamma_T\overline{\mathbf{V}} - (\gamma_T \mathbf{E})^{\top} \bigl( \mathbf{n} \times (\gamma_T\overline{\mathbf{V}} \times \delta\mathbf{n}) \bigr) \Bigr] \,\mathrm{d}s, \\[10pt]
I_2 &:= -\int_\Gamma \Bigl[ \bigl( -\operatorname{div}_\Gamma \mathbf{h}_T - 2\kappa h_n \bigr) \mathbf{g}(\boldsymbol{x}, \gamma_T \mathbf{E}) - \nabla_{\boldsymbol{x}}\mathbf{g}(\cdot, \gamma_T \mathbf{E}) \mathbf{h} \\
&\qquad \qquad - \mathbf{g}_{\boldsymbol{z}} \Bigl( \boldsymbol{x}, \gamma_T \mathbf{E}; \, \delta\mathbf{n} \times (\mathbf{E} \times \mathbf{n}) + \mathbf{n} \times (\mathbf{E} \times \delta \mathbf{n}) - \mathbf{n} \times \bigl( ((\mathbf{h} \cdot \nabla)\mathbf{E}) \times \mathbf{n} \bigr) \Bigr) \Bigr]^{\top} \gamma_T \overline{\mathbf{V}} \,\mathrm{d}s \\
&\quad - \int_\Gamma \Bigl[ (J_{\mathbf{h}}\mathbf{g}(\cdot, \gamma_T \mathbf{E}))^{\top} \gamma_T\overline{\mathbf{V}} - \mathbf{g}(\cdot, \gamma_T \mathbf{E})^{\top} \bigl( \mathbf{n} \times (\gamma_T\overline{\mathbf{V}} \times \delta \mathbf{n}) \bigr) \Bigr] \,\mathrm{d}s.
\end{align*}

Next, we simplify the volume contribution $\mathscr V$. Let $\mathbf{C} := \nabla \times \mathbf{E}$. By recasting the skew-symmetric component of the displacement gradient as the cross product $(\nabla \times \mathbf{E}) \times \mathbf{h}$, we obtain the following identity for the curl of the convective perturbation:
\begin{equation}
\label{eq:curl_material_identity}
\begin{aligned}
\nabla \times \bigl( J_{\mathbf{h}}^{\top}\mathbf{E} + (\mathbf{h} \cdot \nabla)\mathbf{E} \bigr) 
&= \nabla \times \bigl( (J_{\mathbf{E}} - J_{\mathbf{E}}^{\top})\mathbf{h} + \nabla(\mathbf{h} \cdot \mathbf{E}) \bigr) \\
&= \nabla \times \bigl( (\nabla \times \mathbf{E}) \times \mathbf{h} \bigr) \\
&= (\operatorname{div} \mathbf{h})(\nabla \times \mathbf{E}) + (\mathbf{h} \cdot \nabla)(\nabla \times \mathbf{E}) - J_{\mathbf{h}}(\nabla \times \mathbf{E}).
\end{aligned}
\end{equation}
In deriving \eqref{eq:curl_material_identity}, we have invoked the identity $\nabla(\mathbf{h} \cdot \mathbf{E}) = J_{\mathbf{h}}^{\top}\mathbf{E} + J_{\mathbf{E}}^{\top}\mathbf{h}$ and the vector identity for the curl of a cross product,
\[
\nabla \times (\mathbf{C} \times \mathbf{h}) = (\operatorname{div} \mathbf{h})\mathbf{C} + (\mathbf{h} \cdot \nabla)\mathbf{C} - ( \mathbf{C} \cdot \nabla ) \mathbf{h},
\]
noting that $\operatorname{div} \mathbf{C} = \operatorname{div}(\nabla \times \mathbf{E}) = 0$. Substituting \eqref{eq:curl_material_identity} into the volume part of \eqref{eq:shape_derivative}, the term $\mathscr V$ becomes
\begin{align*}
\mathscr V &= -\int_\Omega \bigl( (\mathbf{h} \cdot \nabla)(\nabla \times \mathbf{E}) + J_{\mathbf{h}}^{\top} (\nabla \times \mathbf{E}) \bigr)^{\top} \nabla \times \overline{\mathbf{V}} \,\mathrm{d}\boldsymbol{x} \\
&\quad + k^2 \int_\Omega \bigl( (\mathbf{h} \cdot \nabla)\mathbf{E} + (\operatorname{div} \mathbf{h})\mathbf{E} - J_{\mathbf{h}}\mathbf{E} \bigr)^{\top} \overline{\mathbf{V}} \,\mathrm{d}\boldsymbol{x} \\
&= -\int_\Omega \Bigl[ J_{\nabla \times \mathbf{E}}\mathbf{h} + J_{\mathbf{h}}^{\top} (\nabla \times \mathbf{E}) \Bigr]^{\top} \nabla \times \overline{\mathbf{V}} \,\mathrm{d}\boldsymbol{x} \\
&\quad + k^2 \int_\Omega \bigl( J_{\mathbf{E}}\mathbf{h} + (\operatorname{div} \mathbf{h})\mathbf{E} - J_{\mathbf{h}}\mathbf{E} \bigr)^{\top} \overline{\mathbf{V}} \,\mathrm{d}\boldsymbol{x}.
\end{align*}

By the governing Maxwell equation $\nabla \times (\nabla \times \mathbf{E}) = k^2 \mathbf{E}$, the skew-symmetric component of the Jacobian $J_{\nabla \times \mathbf{E}}$ satisfies the identity
\[
(J_{\nabla \times \mathbf{E}} - J_{\nabla \times \mathbf{E}}^{\top}) \mathbf{h} = \bigl( \nabla \times (\nabla \times \mathbf{E}) \bigr) \times \mathbf{h} = k^2 \mathbf{E} \times \mathbf{h}.
\]
Furthermore, characterizing the gradient of the scalar product as 
\[
\nabla (\mathbf{h} \cdot \nabla \times \mathbf{E}) = J_{\mathbf{h}}^{\top} \nabla \times \mathbf{E} + J_{\nabla \times \mathbf{E}}^{\top} \mathbf{h},
\]
and substituting these relations into the volume contribution $\mathscr V$, we obtain
\begin{equation*}
\begin{aligned}
\mathscr V &= -\int_\Omega \nabla (\mathbf{h} \cdot \nabla \times \mathbf{E}) \cdot \nabla \times \overline{\mathbf{V}} \,\mathrm{d}\boldsymbol{x} \\
&\quad + k^2 \int_\Omega \nabla \times (\mathbf{E} \times \mathbf{h}) \cdot \overline{\mathbf{V}} \,\mathrm{d}\boldsymbol{x} - k^2 \int_\Omega (\mathbf{E} \times \mathbf{h}) \cdot \nabla \times \overline{\mathbf{V}} \,\mathrm{d}\boldsymbol{x}.
\end{aligned}
\end{equation*}
Next, we invoke the vector identity
\[
\nabla \times (\mathbf{E} \times \mathbf{h}) \cdot \overline{\mathbf{V}} = \nabla \cdot \bigl( (\mathbf{E} \times \mathbf{h}) \times \overline{\mathbf{V}} \bigr) + (\mathbf{E} \times \mathbf{h}) \cdot \nabla \times \overline{\mathbf{V}}
\]
and observe that $\nabla \cdot (\nabla \times \overline{\mathbf{V}}) = 0$, which implies
\[
\nabla (\mathbf{h} \cdot \nabla \times \mathbf{E}) \cdot \nabla \times \overline{\mathbf{V}} = \nabla \cdot \bigl( (\mathbf{h} \cdot \nabla \times \mathbf{E}) \nabla \times \overline{\mathbf{V}} \bigr).
\]
Applying the divergence theorem and recalling that the perturbation $\mathbf{h}$ vanishes in a neighborhood of the artificial boundary $\Gamma_R$, the volume contribution $\mathscr V$ reduces to the following boundary integral over $\Gamma$:
\begin{equation}
\label{eq:A_boundary_initial}
\mathscr V = \int_\Gamma \left[ (\mathbf{h} \cdot \nabla \times \mathbf{E}) (\mathbf{n} \cdot \nabla \times \overline{\mathbf{V}}) - k^2 \bigl( (\mathbf{E} \times \mathbf{h}) \times \overline{\mathbf{V}} \bigr) \cdot \mathbf{n} \right] \mathrm{d}s,
\end{equation}
where $\mathbf{n}$ denotes the unit normal on $\Gamma$ directed into the domain $\Omega$.

To further recast the boundary integral \eqref{eq:A_boundary_initial}, we utilize the surface identity $\mathbf{n} \cdot (\nabla \times \overline{\mathbf{V}}) = -\operatorname{div}_\Gamma(\mathbf{n} \times \overline{\mathbf{V}})$. Integration by parts on the surface $\Gamma$ then implies that for any scalar field $\psi \in C^1(\Gamma)$, 
\[
\int_\Gamma \psi \, \mathbf{n} \cdot (\nabla \times \overline{\mathbf{V}}) \, \mathrm{d}s = \int_\Gamma \nabla_\Gamma \psi \cdot (\mathbf{n} \times \overline{\mathbf{V}}) \, \mathrm{d}s.
\]
Setting $\psi = \mathbf{h} \cdot \nabla \times \mathbf{E}$ and invoking the triple product property $-\bigl( (\mathbf{E} \times \mathbf{h}) \times \overline{\mathbf{V}} \bigr) \cdot \mathbf{n} = (\mathbf{E} \times \mathbf{h}) \cdot (\mathbf{n} \times \overline{\mathbf{V}})$, we obtain the following representation for the volume term $\mathscr V$:
\begin{equation}
\label{eq:A_final}
\mathscr V = \int_\Gamma \nabla_\Gamma (\mathbf{h} \cdot \nabla \times \mathbf{E}) \cdot (\mathbf{n} \times \overline{\mathbf{V}}) \, \mathrm{d}s + k^2 \int_\Gamma (\mathbf{E} \times \mathbf{h}) \cdot (\mathbf{n} \times \overline{\mathbf{V}}) \, \mathrm{d}s.
\end{equation}

We now introduce the orthogonal decompositions of the deformation $\mathbf{h}$ and the reference field $\mathbf{E}$ into their tangential and normal components:
\[
\mathbf{h} = \mathbf{h}_T + h_n \mathbf{n} \quad \text{and} \quad \mathbf{E} = \gamma_T \mathbf{E} + E_n \mathbf{n},
\]
where $h_n := \mathbf{h} \cdot \mathbf{n}$ and $E_n := \mathbf{E} \cdot \mathbf{n}$. In the first integrand of \eqref{eq:A_final}, we use the identity $(\nabla_\Gamma \psi) \cdot (\mathbf{n} \times \overline{\mathbf{V}}) = -(\mathbf{n} \times \nabla_\Gamma \psi) \cdot \overline{\mathbf{V}}$ to find
\[
\begin{aligned}
\nabla_\Gamma (\mathbf{h} \cdot \nabla \times \mathbf{E}) \cdot (\mathbf{n} \times \overline{\mathbf{V}}) 
&= \left[ \nabla_\Gamma (\mathbf{h}_T \cdot \nabla \times \mathbf{E}) + \nabla_\Gamma \bigl( h_n (\nabla \times \mathbf{E}) \cdot \mathbf{n} \bigr) \right] \cdot (\mathbf{n} \times \overline{\mathbf{V}}) \\
&= - \left[ \operatorname{Curl}_\Gamma (\mathbf{h}_T \cdot \nabla \times \mathbf{E}) + \operatorname{Curl}_\Gamma \bigl( h_n \operatorname{curl}_\Gamma(\gamma_T \mathbf{E}) \bigr) \right] \cdot \overline{\mathbf{V}},
\end{aligned}
\]
where $\operatorname{Curl}_\Gamma \psi := \mathbf{n} \times \nabla_\Gamma \psi$ is the surface vector curl. For the second integrand, the triple product expansion $(\mathbf{E} \times \mathbf{h}) \times \mathbf{n} = (\mathbf{E} \cdot \mathbf{n})\mathbf{h} - (\mathbf{h} \cdot \mathbf{n})\mathbf{E}$ yields
\[
k^2 (\mathbf{E} \times \mathbf{h}) \cdot (\mathbf{n} \times \overline{\mathbf{V}}) 
= k^2 \bigl[ (\mathbf{E} \times \mathbf{h}) \times \mathbf{n} \bigr] \cdot \overline{\mathbf{V}} 
= k^2 \bigl[ E_n \mathbf{h}_T - h_n \gamma_T \mathbf{E} \bigr] \cdot \overline{\mathbf{V}}.
\]
Consequently, the volume contribution $\mathscr V$ is expressed as the boundary integral
\begin{equation}
\label{eq:A_final_decomposed}
\begin{aligned}
\mathscr V &= -\int_\Gamma \left[ \operatorname{Curl}_\Gamma (\mathbf{h}_T \cdot \nabla \times \mathbf{E}) + \operatorname{Curl}_\Gamma \bigl( h_n \operatorname{curl}_\Gamma(\gamma_T \mathbf{E}) \bigr) \right] \cdot \overline{\mathbf{V}} \, \mathrm{d}s \\
&\quad + k^2 \int_\Gamma \left[ E_n \mathbf{h}_T - h_n \gamma_T \mathbf{E} \right] \cdot \overline{\mathbf{V}} \, \mathrm{d}s.
\end{aligned}
\end{equation}

The normal component $E_n$ on $\Gamma$ is characterized by projecting the Maxwell system $\nabla \times (\nabla \times \mathbf{E}) = k^2 \mathbf{E}$ onto the unit normal $\mathbf{n}$, which yields
\[
k^2 E_n = \mathbf{n} \cdot \nabla \times (\nabla \times \mathbf{E}).
\]
Utilizing the identity $\mathbf{n} \cdot (\nabla \times \mathbf{U}) = -\operatorname{div}_\Gamma (\mathbf{n} \times \mathbf{U})$ for the field $\mathbf{U} = \nabla \times \mathbf{E}$ and incorporating the nonlinear impedance boundary condition
\[
\mathbf{n} \times (\nabla \times \mathbf{E}) = \mathbf{g}(\boldsymbol{x}, \gamma_T \mathbf{E}) - \mathrm{i} k \lambda \, \gamma_T \mathbf{E},
\]
we obtain a representation of the normal component solely in terms of surface differential operators:
\[
k^2 E_n = \operatorname{div}_\Gamma \left( \mathrm{i} k \lambda \, \gamma_T \mathbf{E} - \mathbf{g}(\boldsymbol{x}, \gamma_T \mathbf{E}) \right).
\]

We proceed to characterize the boundary functionals $I_1$ and $I_2$. While we initially retain the tangential perturbation $\mathbf{h}_T$, we shall subsequently demonstrate that all contributions associated with $\mathbf{h}_T$ vanish, in accordance with Hadamard's boundary symmetry principle. For notational convenience, let $\mathbf{g}_E := \mathbf{g}(\cdot, \gamma_T \mathbf{E})$ denote the nonlinear response evaluated at the reference tangential trace on $\Gamma$.

We utilize the orthogonal decompositions
\[
\mathbf{h} = \mathbf{h}_T + h_n \mathbf{n}, \quad \mathbf{E} = \gamma_T \mathbf{E} + E_n \mathbf{n}, \quad \overline{\mathbf{V}} = \gamma_T \overline{\mathbf{V}} + (\mathbf{n} \cdot \overline{\mathbf{V}}) \mathbf{n},
\]
together with the first-order variation of the unit normal $\delta \mathbf{n} = -\nabla_\Gamma h_n + J_{\mathbf{n}} \mathbf{h}_T$, where $J_{\mathbf{n}}$ is the shape operator (Weingarten map) on $\Gamma$. We first simplify the terms in $I_1$ arising from the domain transformation.

Regarding the contribution involving the Jacobian $J_{\mathbf{E}}$, we expand $J_{\mathbf{E}} \mathbf{h} = J_{\mathbf{E}} \mathbf{h}_T + h_n \frac{\partial \mathbf{E}}{\partial \mathbf{n}}$. Differentiating the decomposition of $\mathbf{E}$ along the tangential field $\mathbf{h}_T$ yields
\[
J_{\mathbf{E}} \mathbf{h}_T = (\nabla_\Gamma \gamma_T \mathbf{E}) \mathbf{h}_T + (\nabla_\Gamma E_n \cdot \mathbf{h}_T) \mathbf{n} + E_n (J_{\mathbf{n}} \mathbf{h}_T).
\]
Since the test field $\gamma_T \overline{\mathbf{V}}$ is strictly tangential, its inner product with any normal component vanishes identically. Consequently, the coordinate transformation term, when projected onto the test space, admits the representation
\[
\begin{aligned}
\mathbf{n} \times \bigl( (J_{\mathbf{E}} \mathbf{h}) \times \mathbf{n} \bigr) \cdot \gamma_T \overline{\mathbf{V}} &= (J_{\mathbf{E}} \mathbf{h}) \cdot \gamma_T \overline{\mathbf{V}} \\
&= \bigl( (\nabla_\Gamma \gamma_T \mathbf{E}) \mathbf{h}_T \bigr) \cdot \gamma_T \overline{\mathbf{V}} + E_n (J_{\mathbf{n}} \mathbf{h}_T) \cdot \gamma_T \overline{\mathbf{V}} + h_n \frac{\partial \mathbf{E}}{\partial \mathbf{n}} \cdot \gamma_T \overline{\mathbf{V}}.
\end{aligned}
\]

Regarding the term involving $J_{\mathbf{h}} \gamma_T \mathbf{E}$, we decompose the Jacobian as $J_{\mathbf{h}} \gamma_T \mathbf{E} = J_{\mathbf{h}_T} \gamma_T \mathbf{E} + J_{h_n \mathbf{n}} \gamma_T \mathbf{E}$. Differentiating the tangential field $\gamma_T \mathbf{E}$ as a section of the tangent bundle yields
\[
J_{\mathbf{h}_T} \gamma_T \mathbf{E} = (\nabla_\Gamma \mathbf{h}_T) \gamma_T \mathbf{E} - \bigl( \gamma_T \mathbf{E} \cdot (J_{\mathbf{n}} \mathbf{h}_T) \bigr) \mathbf{n},
\]
while the normal variation contributes
\[
J_{h_n \mathbf{n}} \gamma_T \mathbf{E} = h_n J_{\mathbf{n}} \gamma_T \mathbf{E} + (\gamma_T \mathbf{E} \cdot \nabla_\Gamma h_n) \mathbf{n}.
\]
Consequently, the resulting scalar product admits the representation
\[
\begin{aligned}
(J_{\mathbf{h}} \gamma_T \mathbf{E}) \cdot \overline{\mathbf{V}} &= \bigl( (\nabla_\Gamma \mathbf{h}_T) \gamma_T \mathbf{E} \bigr) \cdot \gamma_T \overline{\mathbf{V}} - \bigl( \gamma_T \mathbf{E} \cdot (J_{\mathbf{n}} \mathbf{h}_T) \bigr) (\mathbf{n} \cdot \overline{\mathbf{V}}) \\
&\quad + h_n (J_{\mathbf{n}} \gamma_T \mathbf{E}) \cdot \gamma_T \overline{\mathbf{V}} + (\gamma_T \mathbf{E} \cdot \nabla_\Gamma h_n) (\mathbf{n} \cdot \overline{\mathbf{V}}).
\end{aligned}
\]

Substituting these identities into the expression for $I_1$ and aggregating the tangential and normal components, we obtain
\[
\begin{aligned}
I_1 &= -\mathrm{i}k\lambda \int_\Gamma \Bigl[ \gamma_T \mathbf{E} \operatorname{div}_\Gamma \mathbf{h}_T - (\nabla_\Gamma \mathbf{h}_T) \gamma_T \mathbf{E} + (\nabla_\Gamma \gamma_T \mathbf{E}) \mathbf{h}_T \\
&\qquad \qquad \quad + 2\kappa h_n \gamma_T \mathbf{E} + E_n \nabla_\Gamma h_n + h_n \Bigl( \frac{\partial \gamma_T \mathbf{E}}{\partial \mathbf{n}} - J_{\mathbf{n}} \gamma_T \mathbf{E} \Bigr) \Bigr]^{\top} \gamma_T \overline{\mathbf{V}} \,\mathrm{d}s.
\end{aligned}
\]
By invoking the Weingarten map $\mathcal{S}(\boldsymbol \tau) := J_{\mathbf{n}} (\boldsymbol \tau)$ and the additive  curvature $\mathfrak{H} := \operatorname{tr} \mathcal{S} = 2\kappa$, we consolidate the curvature-related terms using the identity $2\kappa h_n \gamma_T \mathbf{E} - h_n J_{\mathbf{n}} \gamma_T \mathbf{E} = h_n \mathfrak{H} \gamma_T \mathbf{E} - h_n \mathcal{S}(\gamma_T \mathbf{E})$. This yields the following refined characterization of the impedance boundary contribution:
\begin{equation}
\label{eq:I1_refined}
\begin{aligned}
I_1 &= -\mathrm{i}k\lambda \int_\Gamma \Bigl[ \gamma_T \mathbf{E} \operatorname{div}_\Gamma \mathbf{h}_T - (\nabla_\Gamma \mathbf{h}_T) \gamma_T \mathbf{E} + (\nabla_\Gamma \gamma_T \mathbf{E}) \mathbf{h}_T \\
&\qquad \qquad \quad + h_n \mathfrak{H} \gamma_T \mathbf{E} - h_n \mathcal{S}(\gamma_T \mathbf{E}) + E_n \nabla_\Gamma h_n + h_n \frac{\partial \gamma_T \mathbf{E}}{\partial \mathbf{n}} \Bigr]^{\top} \gamma_T \overline{\mathbf{V}} \,\mathrm{d}s.
\end{aligned}
\end{equation}

We now address the reduction of the nonlinear boundary functional $I_2$. As the relevant geometric decompositions were established in the analysis of $I_1$, we focus here on the identities specific to the nonlinear response. First, we expand the Jacobian of the deformed nonlinearity as
\[
J_{\mathbf{h}}\mathbf{g}_E = J_{\mathbf{h}_T}\mathbf{g}_E + \mathbf{n}(\nabla h_n)^{\top}\mathbf{g}_E + h_n \mathcal{S}\mathbf{g}_E.
\]
The second term on the right-hand side is strictly normal; consequently, its contribution vanishes identically upon pairing with the tangential test field $\gamma_T\overline{\mathbf{V}}$. Furthermore, the chain rule for the composition yields
\[
(\nabla_\Gamma \mathbf{g}_E) \mathbf{h}_T = \nabla_{\boldsymbol{x}}\mathbf{g}(\cdot,\gamma_T\mathbf{E})\mathbf{h}_T + \mathbf{g}_{\boldsymbol{z}} \bigl(\cdot,\gamma_T\mathbf{E};\, (\nabla_\Gamma\gamma_T\mathbf{E})\mathbf{h}_T\bigr).
\]
Since $\mathbf{g}$ depends on the electric field solely through its tangential trace, the partial variation $\mathbf{g}_{\boldsymbol{z}}$ vanishes for normal arguments; that is, $\mathbf{g}_{\boldsymbol{z}} (\cdot,\gamma_T\mathbf{E}; \eta\mathbf{n}) = 0$ for any scalar field $\eta$.

Substituting these identities into $I_2$ and incorporating the simplification of the coordinate transformation term $\mathbf{n}\times((J_{\mathbf{E}}\mathbf{h})\times\mathbf{n})$ established in the previous step, we obtain
\[
\begin{aligned}
I_2 &= \int_\Gamma \Bigl[ \mathbf{g}_E \operatorname{div}_\Gamma\mathbf{h}_T - (\nabla_\Gamma\mathbf{h}_T)\mathbf{g}_E + (\nabla_\Gamma\mathbf{g}_E)\mathbf{h}_T \\
&\qquad + h_n \Bigl( \partial_{\mathbf{n}}\mathbf{g}_E - \mathcal{S}\mathbf{g}_E \Bigr) + h_n \mathfrak{H}\,\mathbf{g}_E \\
&\qquad + \mathbf{g}_{\boldsymbol{z}} \bigl( \cdot, \gamma_T\mathbf{E};\, -(\delta\mathbf{n}\cdot\mathbf{E})\mathbf{n} - E_n\delta\mathbf{n} \bigr) \Bigr]^{\top} \gamma_T\overline{\mathbf{V}} \, \mathrm{d}s.
\end{aligned}
\]
The term $(\delta\mathbf{n}\cdot\mathbf{E})\mathbf{n}$ in the argument of $\mathbf{g}_{\boldsymbol{z}}$ is normal to the boundary and thus, by the aforementioned property of $\mathbf{g}_{\boldsymbol{z}}$, makes no contribution to the integral. It follows that $I_2$ reduces to
\begin{equation}
\label{eq:I2_refined}
\begin{aligned}
I_2 &= \int_\Gamma \Bigl[ \mathbf{g}_E \operatorname{div}_\Gamma\mathbf{h}_T - (\nabla_\Gamma\mathbf{h}_T)\mathbf{g}_E + (\nabla_\Gamma\mathbf{g}_E)\mathbf{h}_T \\
&\qquad + h_n \Bigl( \partial_{\mathbf{n}}\mathbf{g}_E - \mathcal{S}\mathbf{g}_E \Bigr) + h_n \mathfrak{H}\,\mathbf{g}_E + \mathbf{g}_{\boldsymbol{z}} \bigl( \cdot, \gamma_T\mathbf{E};\, -E_n\delta\mathbf{n} \bigr) \Bigr]^{\top} \gamma_T\overline{\mathbf{V}} \, \mathrm{d}s.
\end{aligned}
\end{equation}

To establish the cancellation of terms involving the tangential variation $\mathbf{h}_T$, we exploit the surface divergence theorem on the manifold $\Gamma$. For any tangential vector fields $\mathbf{p}, \mathbf{q}, \mathbf{r}$ on $\Gamma$, we recall the identity
\[
\operatorname{div}_\Gamma\bigl(\mathbf{p}(\mathbf{q}\cdot\mathbf{r})\bigr) = (\operatorname{div}_\Gamma\mathbf{p})(\mathbf{q}\cdot\mathbf{r}) + \bigl((\nabla_\Gamma\mathbf{q})\mathbf{p}\bigr)\cdot\mathbf{r} + \bigl((\nabla_\Gamma\mathbf{r})\mathbf{p}\bigr)\cdot\mathbf{q}.
\]
We apply this relation to the triples $(\mathbf{h}_T, \gamma_T\mathbf{E}, \gamma_T\overline{\mathbf{V}})$ and $(\gamma_T\mathbf{E}, \mathbf{h}_T, \gamma_T\overline{\mathbf{V}})$, and likewise to $(\mathbf{h}_T, \mathbf{g}_E, \gamma_T\overline{\mathbf{V}})$ and $(\mathbf{g}_E, \mathbf{h}_T, \gamma_T\overline{\mathbf{V}})$. Since $\Gamma$ is a closed surface, the integrals of these surface divergences vanish by Stokes' theorem. Substituting the resulting identities into the expressions for $I_1$ and $I_2$ yields the following intermediate variational formulation:
\begin{equation}\label{eg:shape}
\begin{aligned}
\widehat{\mathcal{A}}(\delta\mathbf{E}, \mathbf{V}) &= \int_\Gamma \Bigl[ (\mathbf{h}_T \cdot \nabla \times \mathbf{E}) (\mathbf{n} \cdot \nabla \times \overline{\mathbf{V}}) - \Bigl( \operatorname{Curl}_\Gamma \bigl( h_n \operatorname{curl}_\Gamma(\gamma_T\mathbf{E}) \bigr) + k^2 h_n \gamma_T\mathbf{E} \Bigr) \cdot \gamma_T\overline{\mathbf{V}} \\
&\quad -\mathrm{i}k\lambda \Bigl( E_n \nabla_\Gamma h_n + h_n \bigl( \tfrac{\partial\gamma_T\mathbf{E}}{\partial\mathbf{n}} - \mathcal{S}(\gamma_T\mathbf{E}) \bigr) \Bigr) \cdot \gamma_T\overline{\mathbf{V}} \\
&\quad - \mathrm{i}k\lambda h_n \mathfrak{H} (\gamma_T\mathbf{E}) \cdot \overline{\mathbf{V}} + h_n \Bigl( \tfrac{\partial\mathbf{g}_E}{\partial\mathbf{n}} - \mathcal{S}(\mathbf{g}_E) \Bigr) \cdot \gamma_T\overline{\mathbf{V}} \\
&\quad + h_n \mathfrak{H} \mathbf{g}_E \cdot \overline{\mathbf{V}} + \mathbf{g}_{\boldsymbol{z}} \bigl(\cdot, \gamma_T\mathbf{E}; -E_n \delta\mathbf{n}\bigr) \cdot \gamma_T\overline{\mathbf{V}} \\
&\quad - \mathrm{i}k\lambda \bigl( \nabla_\Gamma(\gamma_T\overline{\mathbf{V}}) \gamma_T\mathbf{E} \bigr) \cdot \mathbf{h}_T + \mathrm{i}k\lambda \bigl( \nabla_\Gamma(\gamma_T\overline{\mathbf{V}}) \mathbf{h}_T \bigr) \cdot \gamma_T\mathbf{E} \\
&\quad +\bigl( \nabla_\Gamma(\gamma_T\overline{\mathbf{V}}) \mathbf{g}_E \bigr) \cdot \mathbf{h}_T - \bigl( \nabla_\Gamma(\gamma_T\overline{\mathbf{V}}) \mathbf{h}_T \bigr) \cdot \mathbf{g}_E \Bigr] \, \mathrm{d}s.
\end{aligned}
\end{equation}

It remains to simplify the final four terms in \eqref{eg:shape}. By virtue of the boundary condition $\mathbf{n} \times (\nabla \times \mathbf{E}) = -\mathrm{i}k\lambda \gamma_T\mathbf{E} + \mathbf{g}_E$, we define the tangential field $\mathbf{b} := \mathbf{n} \times (\nabla \times \mathbf{E})$ and observe that
\[
\begin{aligned}
&-\mathrm{i}k\lambda \bigl( \nabla_\Gamma(\gamma_T\overline{\mathbf{V}}) \gamma_T\mathbf{E} \bigr) \cdot \mathbf{h}_T + \mathrm{i}k\lambda \bigl( \nabla_\Gamma(\gamma_T\overline{\mathbf{V}}) \mathbf{h}_T \bigr) \cdot \gamma_T\mathbf{E} \\
&\quad + \bigl( \nabla_\Gamma(\gamma_T\overline{\mathbf{V}}) \mathbf{g}_E \bigr) \cdot \mathbf{h}_T - \bigl( \nabla_\Gamma(\gamma_T\overline{\mathbf{V}}) \mathbf{h}_T \bigr) \cdot \mathbf{g}_E \\
&\qquad = \bigl( \nabla_\Gamma(\gamma_T\overline{\mathbf{V}}) \mathbf{b} \bigr) \cdot \mathbf{h}_T - \bigl( \nabla_\Gamma(\gamma_T\overline{\mathbf{V}}) \mathbf{h}_T \bigr) \cdot \mathbf{b} \\
&\qquad = \Bigl[ \bigl( -\nabla_\Gamma(\gamma_T\overline{\mathbf{V}}) + \nabla_\Gamma(\gamma_T\overline{\mathbf{V}})^{\top} \bigr) \mathbf{h}_T \Bigr] \cdot \mathbf{b}.
\end{aligned}
\]
The skew-symmetric part of the surface gradient is related to the normal component of the curl via the identity
\[
\Bigl( -\nabla_\Gamma(\gamma_T\overline{\mathbf{V}}) + \nabla_\Gamma(\gamma_T\overline{\mathbf{V}})^{\top} \Bigr) \mathbf{h}_T = -(\mathbf{n} \cdot \nabla \times \overline{\mathbf{V}}) (\mathbf{n} \times \mathbf{h}_T).
\]
Consequently, the expression reduces to
\[
\Bigl[ \bigl( -\nabla_\Gamma(\gamma_T\overline{\mathbf{V}}) + \nabla_\Gamma(\gamma_T\overline{\mathbf{V}})^{\top} \bigr) \mathbf{h}_T \Bigr] \cdot \mathbf{b} = -(\mathbf{n} \cdot \nabla \times \overline{\mathbf{V}}) (\mathbf{n} \times \mathbf{h}_T) \cdot \bigl( \mathbf{n} \times (\nabla \times \mathbf{E}) \bigr).
\]
Substituting this result into \eqref{eg:shape}, we find that the tangential contribution $(\mathbf{h}_T \cdot \nabla \times \mathbf{E}) (\mathbf{n} \cdot \nabla \times \overline{\mathbf{V}})$ is precisely canceled by the four terms derived above. 

Since all terms involving $\mathbf{h}_T$ vanish, the resulting variational form depends exclusively on the normal component $h_n = \mathbf{h} \cdot \mathbf{n}$. This is consistent with the Hadamard structure theorem for shape derivatives. This recovers the boundary expression \eqref{eq:shape1} and completes the proof.

\end{proof}

\begin{theo}\label{th:shape_derivative_npec}
Suppose that the nonlinear perfectly conducting boundary condition
\[
\mathbf{n} \times \mathbf{E} = \mathbf{g}(\boldsymbol{x}, \gamma_T \mathbf{E}) \qquad \text{on } \Gamma
\]
is satisfied, where \(\mathbf{g}\) satisfies Assumption~\ref{ass:enhanced_g}. 
Then the shape derivative \(\delta \mathbf{E}\) is the unique weak solution to the following \(\mathbb{R}\)-linear boundary value problem:
\begin{equation*}
\nabla \times (\nabla \times \delta \mathbf{E}) - k^2 \delta \mathbf{E} = 0 \qquad \text{in } \Omega,
\end{equation*}
subject to the boundary condition on \(\Gamma\)
\begin{equation}
\label{eq:shape_npec}
\mathbf{n} \times \delta \mathbf{E} - \mathbf{g}_{\boldsymbol{z}} \bigl( \boldsymbol{x}, \gamma_T \mathbf{E}; \gamma_T \delta \mathbf{E} \bigr) = \mathcal{F}_{\mathbf{h}},
\end{equation}
where the right-hand side \(\mathcal{F}_{\mathbf{h}}\) is defined by
\begin{equation}
\label{eq:npec_shape_rhs}
\begin{aligned}
\mathcal{F}_{\mathbf{h}} &:= -\operatorname{Curl}_\Gamma (h_n E_n) - h_n \gamma_T(\nabla \times \mathbf{E}) \\
&\quad + h_n \left( \frac{\partial}{\partial \mathbf{n}} \mathbf{g}(\boldsymbol{x}, \gamma_T \mathbf{E}) - \mathcal{S} \bigl( \mathbf{g}(\boldsymbol{x}, \gamma_T \mathbf{E}) \bigr) \right) + h_n \mathfrak{H} \mathbf{g}(\boldsymbol{x}, \gamma_T \mathbf{E}) \\
&\quad + \mathbf{g}_{\boldsymbol{z}} \bigl( \boldsymbol{x}, \gamma_T \mathbf{E}; E_n \nabla_\Gamma h_n \bigr).
\end{aligned}
\end{equation}
On the artificial boundary \(\Gamma_R\), the shape derivative satisfies the transparent boundary condition
\begin{equation}
\label{eq:npec_shape_radiation}
-\frac{\mathrm{i}}{k} \Lambda^{-1} \bigl( \gamma_t(\nabla \times \delta \mathbf{E}) \bigr) = \gamma_t \delta \mathbf{E} \qquad \text{on } \Gamma_R,
\end{equation}
which ensures that \(\delta \mathbf{E}\) admits a unique extension to a radiating weak solution on \(\mathbb{R}^3 \setminus \overline{D}\). 
\end{theo}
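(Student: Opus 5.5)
The argument mirrors the proof of Theorem~\ref{th:shape_derivative_nibc}, with one difference: the NPEC condition is of essential type. The shape derivative is therefore obtained by linearizing the boundary condition itself, rather than by reading off a natural condition from the variational residual. We take the material derivative pair $(\mathbf W,\mathbf P)$ from Theorem~\ref{th:p_material_differentiability} and define
\[
\delta\mathbf E:=\mathbf W-J_{\mathbf h}^{\top}\mathbf E-(\mathbf h\cdot\nabla)\mathbf E
\]
as in \eqref{eq:material_derivative_decomposition}. Under the local $H^2$-regularity near $\Gamma$, this is a well-defined field in $H(\operatorname{curl};\Omega)$ with an $L^2$ tangential trace.

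\textbf{Step 1: interior equation and radiation condition.} Test the linearized mixed system with $\boldsymbol\nu$ and then with $\boldsymbol\tau$ compactly supported in $\Omega$. Use $\mathbf P+\dot{\mathcal N}_{\mathbf h}\mathbf Q=\nabla\times\mathbf W$ together with the identity \eqref{eq:curl_material_identity}, namely $\nabla\times(J_{\mathbf h}^\top\mathbf E+(\mathbf h\cdot\nabla)\mathbf E)=\nabla\times(\mathbf C\times\mathbf h)$. As in the volume computation for $\mathscr V$, the $\dot{\mathcal N}_{\mathbf h}$ contributions cancel against the convective ones, using $\nabla\times\mathbf Q=k^2\mathbf E$. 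This yields $\nabla\times\nabla\times\delta\mathbf E-k^2\delta\mathbf E=0$ in $\Omega$. Since $\mathbf h$ and $J_{\mathbf h}$ vanish near $\Gamma_R$, there we have $\delta\mathbf E=\mathbf W$ and $\nabla\times\delta\mathbf E=\mathbf P$. The $\Gamma_R$ part of the linearized form then gives \eqref{eq:npec_shape_radiation} exactly as in the reference mixed problem.

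\textbf{Step 2: the boundary condition, by differentiating the pulled-back essential condition.} On $\Gamma_{\mathbf h}$ we have $\mathbf n_{\mathbf h}\times\mathbf E_{\mathbf h}=\mathbf g(\boldsymbol y,\gamma_T^{\mathbf h}\mathbf E_{\mathbf h})$. Pull this back by $\varphi_{\mathbf h}$, using \eqref{eq:transported_tangential_trace} and $\mathbf E_{\mathbf h}\circ\varphi_{\mathbf h}=J_\varphi^{-\top}\hat{\mathbf E}_{\mathbf h}$, to get an identity on $\Gamma$:
\[
\widetilde{\mathbf n}_{\mathbf h}\times J_\varphi^{-\top}\hat{\mathbf E}_{\mathbf h}=\mathbf g(\varphi_{\mathbf h},\gamma_{T,\mathbf h}\hat{\mathbf E}_{\mathbf h})
\]
(after correcting by the appropriate surface-measure factor if the duality is weighted). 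Expand to first order using $\delta\mathbf n$ from \eqref{eq:normal_shape_derivative}, $J_\varphi^{-\top}=\mathrm{Id}-J_{\mathbf h}^\top+o$, the Nemytskii expansion (A3), and the trace variation \eqref{eq:trace variation}. This gives
\[
\delta\mathbf n\times\mathbf E+\mathbf n\times(\mathbf W-J_{\mathbf h}^\top\mathbf E)=\nabla_{\boldsymbol x}\mathbf g\,\mathbf h+\mathbf g_{\boldsymbol z}\bigl(\cdot,\gamma_T\mathbf E;\gamma_T\mathbf W+\dot\gamma_{T,\mathbf h}\mathbf E\bigr).
\]
Now substitute $\mathbf W=\delta\mathbf E+J_{\mathbf h}^\top\mathbf E+(\mathbf h\cdot\nabla)\mathbf E$ and split $\mathbf h=\mathbf h_T+h_n\mathbf n$ and $\mathbf E=\gamma_T\mathbf E+E_n\mathbf n$.

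The normal-derivative part $h_n\,\mathbf n\times\partial_{\mathbf n}\mathbf E$ is converted using $\mathbf n\times\partial_{\mathbf n}\mathbf E=\nabla_\Gamma(E_n)\times\ldots-\mathbf n\times\ldots$. More precisely, apply the identity $\partial_{\mathbf n}\mathbf E_T-\nabla_\Gamma E_n=-\mathbf n\times(\nabla\times\mathbf E)+\text{curvature terms}$. This produces $-h_n\gamma_T(\nabla\times\mathbf E)$ up to sign and orientation conventions, and $-\operatorname{Curl}_\Gamma(h_nE_n)$ once combined with $E_n\,\mathbf n\times\nabla_\Gamma h_n$ coming from $\delta\mathbf n\times(E_n\mathbf n)$.

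The $\mathbf g$-side is handled as in the reduction of $I_2$. The chain rule for $\nabla_\Gamma\mathbf g_E$, the identity $\mathbf g_{\boldsymbol z}(\cdot,\cdot;\eta\mathbf n)=0$, and $\delta\mathbf n=-\nabla_\Gamma h_n+\mathcal S\mathbf h_T$ together give the terms $h_n(\partial_{\mathbf n}\mathbf g_E-\mathcal S\mathbf g_E)$, $h_n\mathfrak H\mathbf g_E$ and $\mathbf g_{\boldsymbol z}(\cdot,\gamma_T\mathbf E;E_n\nabla_\Gamma h_n)$. The curvature term $\mathfrak H$ enters through the surface-Jacobian weight $\dot\omega_{\mathbf h}$ in the weak realization of the boundary pairing.

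\textbf{Step 3: Hadamard structure and uniqueness.} The $\mathbf h_T$-terms must cancel. Differentiate the reference identity $\mathbf n\times\mathbf E=\mathbf g_E$ tangentially along $\mathbf h_T$. This is Lie-derivative consistency: a tangential flow only reparametrizes $\Gamma$. Combined with $\delta\mathbf n=\mathcal S\mathbf h_T$ for tangential $\mathbf h$, it gives $(\nabla_\Gamma(\mathbf n\times\mathbf E))\mathbf h_T=(\nabla_\Gamma\mathbf g_E)\mathbf h_T$, which annihilates all $\mathbf h_T$ contributions.

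Uniqueness of $\delta\mathbf E$ follows from the assumed well-posedness of the $\mathbb R$-linearized mixed NPEC problem. A lifting argument as in Theorem~\ref{thm:nonlinear_pec} converts the data $\mathcal F_{\mathbf h}$ into the mixed framework, so $\delta\mathbf E$ is determined.

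The main obstacle is Step 2. Tracking signs, orientation of $\mathbf n$, and the exact identity turning $h_n\,\mathbf n\times\partial_{\mathbf n}\mathbf E$ plus the $\delta\mathbf n$ terms into $-\operatorname{Curl}_\Gamma(h_nE_n)-h_n\gamma_T(\nabla\times\mathbf E)$ is delicate. I would verify this first on a flat boundary with constant $h_n$, and then add the curvature corrections.
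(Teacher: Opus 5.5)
Your route differs from the paper's, and its core is sound.

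\textbf{How the two routes compare.} The paper never differentiates the boundary condition pointwise. It subtracts the transport pair from the first linearized mixed equation and converts the volume transport term into $\int_\Gamma \mathbf n\times(J_{\mathbf h}^{\top}\mathbf E+J_{\mathbf E}\mathbf h)\cdot\overline{\mathbf V}\,\mathrm{d}s$ by Stokes. It then linearizes the weighted boundary pairing, which is where $\dot\omega_{\mathbf h}$ and $J_{\mathbf h}\mathbf g_E$ generate $h_n\mathfrak H\mathbf g_E$ and $h_n\mathcal S\mathbf g_E$. Finally it cancels the $\mathbf h_T$-terms with surface-divergence identities and reads off a natural condition.

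You instead differentiate the pulled-back identity $\widetilde{\mathbf n}_{\mathbf h}\times J_\varphi^{-\top}\hat{\mathbf E}_{\mathbf h}=\mathbf g(\varphi_{\mathbf h}(\cdot),\gamma_{T,\mathbf h}\hat{\mathbf E}_{\mathbf h})$ directly in $L^2(\Gamma)$. This is legitimate for three reasons. The mixed form encodes $\mathbf n\times\mathbf E=\mathbf g_E$ as its natural condition. By the cofactor identity the left side depends only on $\mathbf n\times\hat{\mathbf E}_{\mathbf h}$, so no surface-measure correction is needed. The $\mathbf X(\Omega)$-Fr\'echet expansion controls the tangential trace of the remainder in $L^2(\Gamma)$.

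What this buys you is a cleaner Hadamard argument. Your Step 3 is correct: for tangential $\mathbf h$ you have $\delta\mathbf n=\mathcal S\mathbf h_T=(\mathbf h_T\cdot\nabla)\mathbf n$. So the whole $\mathbf h_T$-part of the linearized identity, apart from the $\delta\mathbf E$-terms, is exactly the derivative of $\mathbf n\times\mathbf E=\mathbf g_E$ along $\mathbf h_T$, hence zero. The paper asserts this cancellation only ``by direct calculation''. In exchange, the paper's route delivers the weak form inside the same mixed framework in which $(\mathbf W,\mathbf P)$ was built.

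\textbf{The gap: where the curvature terms come from.} Your sketch gets the origin of the curvature terms wrong, and following it as written would not reproduce \eqref{eq:npec_shape_rhs}. Extend $\mathbf n$ as the gradient of the signed distance, so that $\partial_{\mathbf n}\mathbf g_E=\nabla_{\boldsymbol x}\mathbf g\,\mathbf n+\mathbf g_{\boldsymbol z}(\cdot,\gamma_T\mathbf E;\gamma_T\partial_{\mathbf n}\mathbf E)$.

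In a pointwise derivation there is no surface Jacobian to differentiate. A factor $\omega_{\mathbf h}$ multiplying the whole identity contributes $\dot\omega_{\mathbf h}(\mathbf n\times\mathbf E-\mathbf g_E)=0$. Likewise, for $\mathbf h=h_n\mathbf n$, the $\mathbf g$-side (after moving $\mathbf g_{\boldsymbol z}(\cdot,\gamma_T\mathbf E;\gamma_T\delta\mathbf E)$ to the left) yields only $h_n\partial_{\mathbf n}\mathbf g_E+\mathbf g_{\boldsymbol z}(\cdot,\gamma_T\mathbf E;E_n\nabla_\Gamma h_n)$. It produces no $\mathcal S\mathbf g_E$ and no $\mathfrak H\mathbf g_E$. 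You have carried the bookkeeping of the weighted pairing in $I_2$ over into a strong computation.

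Both terms actually come from the $\mathbf E$-side, in the ``curvature terms'' you left unspecified. One has $\gamma_T\partial_{\mathbf n}\mathbf E-\nabla_\Gamma E_n=(\nabla\times\mathbf E)\times\mathbf n-\mathcal S\gamma_T\mathbf E$, so
\[
-h_n\,\mathbf n\times\bigl(\partial_{\mathbf n}\mathbf E-\nabla_\Gamma E_n\bigr)
=-h_n\gamma_T(\nabla\times\mathbf E)+h_n\,\mathbf n\times\mathcal S\gamma_T\mathbf E .
\]
The missing step is the tangent-plane identity $\mathbf n\times\mathcal S\mathbf v=\mathfrak H\,(\mathbf n\times\mathbf v)-\mathcal S(\mathbf n\times\mathbf v)$ for tangential $\mathbf v$. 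It holds because a symmetric $2\times2$ matrix conjugated by a quarter turn equals $\mathfrak H\,\mathrm{Id}-\mathcal S$. Use it together with the reference condition $\mathbf n\times\gamma_T\mathbf E=\mathbf g_E$. This turns $h_n\,\mathbf n\times\mathcal S\gamma_T\mathbf E$ into $h_n(\mathfrak H\mathbf g_E-\mathcal S\mathbf g_E)$, after which your route gives $\mathcal F_{\mathbf h}$ exactly.

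Your planned flat-boundary check cannot detect this error, since $\mathcal S=0$ there.
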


\begin{proof}
Having established the existence of the material derivative $(\mathbf{W}, \mathbf{P}) \in \mathbb{X}_p(\Omega)$ in Theorem~\ref{th:p_material_differentiability}, we now characterize the boundary value problem satisfied by the shape derivative. To this end, we define
\[
\delta\mathbf{E} := \mathbf{W} - \left(J_{\mathbf{h}}^{\top}\mathbf{E} + J_{\mathbf{E}}\mathbf{h}\right), \qquad 
\delta\mathbf{Q} := \mathbf{P} - \left(J_{\mathbf{h}}^{\top}\mathbf{Q} + J_{\mathbf{Q}}\mathbf{h}\right).
\]
By the $\mathbb{R}$-linearity of the linearized mixed form, it follows that
\[
\begin{aligned}
&\widehat{\mathcal{A}}^{\mathrm{p}} ((\delta\mathbf{E}, \delta\mathbf{Q}), (\boldsymbol{\tau}, \boldsymbol{\nu})) \\
&\quad = \mathcal{L}_{\mathbf{h}}^{\mathrm{p}}(\boldsymbol{\tau}, \boldsymbol{\nu}) - \widehat{\mathcal{A}}^{\mathrm{p}} \left( \left( J_{\mathbf{h}}^{\top}\mathbf{E} + J_{\mathbf{E}}\mathbf{h}, J_{\mathbf{h}}^{\top}\mathbf{Q} + J_{\mathbf{Q}}\mathbf{h} \right), (\boldsymbol{\tau}, \boldsymbol{\nu}) \right).
\end{aligned}
\]
To identify the governing equations in the interior, let $(\boldsymbol{\tau}, \boldsymbol{\nu})$ be test functions with compact support in $\Omega$. For such functions, all boundary integrals vanish. From the first mixed equation for $(\mathbf{W}, \mathbf{P})$, we obtain
\[
\int_\Omega \mathbf{P} \cdot \overline{\boldsymbol{\tau}} \,\mathrm{d}x - \int_\Omega \mathbf{W} \cdot (\nabla \times \overline{\boldsymbol{\tau}}) \,\mathrm{d}x = -\int_\Omega \mathbf{Q}^{\top} \dot{\mathcal{N}}_{\mathbf{h}} \overline{\boldsymbol{\tau}} \,\mathrm{d}x.
\]

We next derive the corresponding identity for the geometric transport terms. For any sufficiently smooth vector field $\mathbf{U}$, we have
\[
\nabla\times
\left(
J_{\mathbf{h}}^{\top}\mathbf{U}+J_{\mathbf{U}}\mathbf{h}
\right)
=
\left((\operatorname{div}\mathbf{h})I-J_{\mathbf{h}}\right)
(\nabla\times\mathbf{U})
+
J_{\nabla\times\mathbf{U}}\mathbf{h} .
\]
This follows from the decomposition 
\[
J_{\mathbf{h}}^{\top}\mathbf{U}+J_{\mathbf{U}}\mathbf{h}
=
(\nabla\times\mathbf{U})\times\mathbf{h}
+
\nabla(\mathbf{h}\cdot\mathbf{U})
\]
in conjunction with the standard vector identity for $\nabla\times(\mathbf{a}\times\mathbf{b})$. Setting $\mathbf{U}=\mathbf{E}$ and noting that $\nabla\times\mathbf{E}=\mathbf{Q}$, we obtain
\[
\nabla\times
\left(
J_{\mathbf{h}}^{\top}\mathbf{E}+J_{\mathbf{E}}\mathbf{h}
\right)
=
\left((\operatorname{div}\mathbf{h})I-J_{\mathbf{h}}\right)\mathbf{Q}
+
J_{\mathbf{Q}}\mathbf{h} .
\]
Recalling the definition $\dot{\mathcal{N}}_{\mathbf{h}} = (\operatorname{div}\mathbf{h})I-J_{\mathbf{h}}-J_{\mathbf{h}}^{\top}$, we arrive at the relation
\[
\nabla\times
\left(
J_{\mathbf{h}}^{\top}\mathbf{E}+J_{\mathbf{E}}\mathbf{h}
\right)
=
J_{\mathbf{h}}^{\top}\mathbf{Q}+J_{\mathbf{Q}}\mathbf{h}
+
\dot{\mathcal{N}}_{\mathbf{h}}\mathbf{Q} .
\]
Consequently, by integration by parts and the fact that $\boldsymbol{\tau}$ is compactly supported in $\Omega$, it follows that
\[
\begin{aligned}
&\int_\Omega
\left(J_{\mathbf{h}}^{\top}\mathbf{Q}+J_{\mathbf{Q}}\mathbf{h}\right)
\cdot\overline{\boldsymbol{\tau}}\,\mathrm{d}x
-\int_\Omega
\left(J_{\mathbf{h}}^{\top}\mathbf{E}+J_{\mathbf{E}}\mathbf{h}\right)
\cdot(\nabla\times\overline{\boldsymbol{\tau}})\,\mathrm{d}x \\
&\quad = -\int_\Omega \mathbf{Q}^\top\dot{\mathcal{N}}_{\mathbf{h}} \overline{\boldsymbol{\tau}}\,\mathrm{d}x .
\end{aligned}
\]
Subtracting this identity from the first mixed equation for $(\mathbf{W}, \mathbf{P})$ yields
\[
\int_\Omega \delta\mathbf{Q}\cdot\overline{\boldsymbol{\tau}}\,\mathrm{d}x - \int_\Omega \delta\mathbf{E}\cdot(\nabla\times\overline{\boldsymbol{\tau}})\,\mathrm{d}x = 0,
\]
which implies that the shape derivatives satisfy
\[
\delta\mathbf{Q}=\nabla\times\delta\mathbf{E} \quad \text{in } \Omega.
\]

Analogously, the second mixed equation for $(\mathbf{W}, \mathbf{P})$ provides
\[
\int_\Omega
(\nabla\times\mathbf{P})\cdot\overline{\boldsymbol{\nu}}\,\mathrm{d}x
-
k^2\int_\Omega
\mathbf{W}\cdot\overline{\boldsymbol{\nu}}\,\mathrm{d}x
=
k^2\int_\Omega
\mathbf{E}^\top\dot{\mathcal{N}}_{\mathbf{h}}
\overline{\boldsymbol{\nu}}\,\mathrm{d}x.
\]
By setting $\mathbf{U}=\mathbf{Q}$ in the previously established vector identity and invoking the relation $\nabla\times\mathbf{Q}=k^2\mathbf{E}$, we find
\[
\begin{aligned}
\nabla\times
\left(
J_{\mathbf{h}}^{\top}\mathbf{Q}+J_{\mathbf{Q}}\mathbf{h}
\right)
&=
\left((\operatorname{div}\mathbf{h})I-J_{\mathbf{h}}\right)
(\nabla\times\mathbf{Q})
+
J_{\nabla\times\mathbf{Q}}\mathbf{h}
\\
&=
k^2
\left((\operatorname{div}\mathbf{h})I-J_{\mathbf{h}}\right)\mathbf{E}
+
k^2J_{\mathbf{E}}\mathbf{h}.
\end{aligned}
\]
It follows that
\[
\begin{aligned}
&\nabla\times
\left(
J_{\mathbf{h}}^{\top}\mathbf{Q}+J_{\mathbf{Q}}\mathbf{h}
\right)
-
k^2
\left(
J_{\mathbf{h}}^{\top}\mathbf{E}+J_{\mathbf{E}}\mathbf{h}
\right)
\\
&\quad=
k^2
\left[
(\operatorname{div}\mathbf{h})I-J_{\mathbf{h}}-J_{\mathbf{h}}^{\top}
\right]\mathbf{E}
=
k^2\dot{\mathcal{N}}_{\mathbf{h}}\mathbf{E}.
\end{aligned}
\]
Consequently, we have
\[
\int_\Omega
\nabla\times
\left(J_{\mathbf{h}}^{\top}\mathbf{Q}+J_{\mathbf{Q}}\mathbf{h}\right)
\cdot\overline{\boldsymbol{\nu}}\,\mathrm{d}x
-
k^2\int_\Omega
\left(J_{\mathbf{h}}^{\top}\mathbf{E}+J_{\mathbf{E}}\mathbf{h}\right)
\cdot\overline{\boldsymbol{\nu}}\,\mathrm{d}x
=
k^2\int_\Omega
\mathbf{E}^\top
\dot{\mathcal{N}}_{\mathbf{h}}
\overline{\boldsymbol{\nu}}\,\mathrm{d}x.
\]
Subtracting this identity from the second mixed equation for $(\mathbf{W}, \mathbf{P})$, we obtain
\[
\int_\Omega
(\nabla\times\delta\mathbf{Q})\cdot\overline{\boldsymbol{\nu}}\,\mathrm{d}x
-
k^2\int_\Omega
\delta\mathbf{E}\cdot\overline{\boldsymbol{\nu}}\,\mathrm{d}x
=0,
\]
which implies that $\nabla\times\delta\mathbf{Q}-k^2\delta\mathbf{E}=0$ in $\Omega$. Combining this result with the relation $\delta\mathbf{Q}=\nabla\times\delta\mathbf{E}$ established previously, we conclude that $\delta \mathbf{E}$ satisfies the  Maxwell equation
\[
\nabla\times(\nabla\times\delta\mathbf{E})-k^2\delta\mathbf{E}=0 \quad \text{in } \Omega.
\]

It remains to characterize the boundary condition on \(\Gamma\). We now use
only the first equation in the mixed variational identity and, for notational
simplicity, denote its test function \(\boldsymbol{\tau}\) by \(\mathbf V\). Since $\delta\mathbf{Q} = \nabla \times \delta\mathbf{E}$, the mixed variational identity may be recast as a single-field boundary variational identity for the shape derivative $\delta\mathbf{E}$. Let $\widehat{\mathcal{A}}^{\mathrm{p}}$ denote the resulting linearized single-field form, defined such that
\[
\widehat{\mathcal{A}}^{\mathrm{p}}(\delta\mathbf{E}, \mathbf{V}) = \widehat{\mathcal{A}}^{\mathrm{p}}(\mathbf{W}, \mathbf{V}) - \widehat{\mathcal{A}}^{\mathrm{p}}\left(J_{\mathbf{h}}^{\top}\mathbf{E} + J_{\mathbf{E}}\mathbf{h}, \mathbf{V}\right).
\]
Let $\mathbf{g}_E := \mathbf{g}(\cdot, \gamma_T \mathbf{E})$ on $\Gamma$. Under the nonlinear perfectly conducting condition $\mathbf{n} \times \mathbf{E} = \mathbf{g}_E$, the material derivative satisfies the linearized boundary identity
\begin{equation*}
\begin{aligned}
\widehat{\mathcal{A}}^{\mathrm{p}}(\mathbf{W}, \mathbf{V}) &= -\int_\Gamma \Bigl[ (\operatorname{div}_\Gamma \mathbf{h}_T + 2\kappa h_n) \mathbf{g}_E + \mathbf{g}_{\boldsymbol{x}}(\cdot, \gamma_T \mathbf{E}) \mathbf{h} \\
&\qquad + \mathbf{g}_{\boldsymbol{z}} \Bigl( \cdot, \gamma_T \mathbf{E}; \delta \mathbf{n} \times (\mathbf{E} \times \mathbf{n}) + \mathbf{n} \times (\mathbf{E} \times \delta \mathbf{n}) - \mathbf{n} \times \bigl( (J_{\mathbf{h}}^{\top} \mathbf{E}) \times \mathbf{n} \bigr) \Bigr) \Bigr] \cdot \gamma_T\overline{\mathbf{V}} \, \mathrm{d}s \\
&\quad +\int_\Gamma \Bigl[ (J_{\mathbf{h}} \mathbf{g}_E) \cdot \overline{\mathbf{V}} - \mathbf{g}_E \cdot \bigl( \mathbf{n} \times (\overline{\mathbf{V}} \times \delta \mathbf{n}) \bigr) \Bigr] \, \mathrm{d}s,
\end{aligned}
\end{equation*}
where the second integral accounts for the first-order expansion of the transported test function and the pulled-back boundary pairing. 
Combining the volume term in \(\mathcal L_{\mathbf h}^{\mathrm p}\) with the
transport contribution and using
\[
\nabla\times
(J_{\mathbf h}^{\top}\mathbf E+J_{\mathbf E}\mathbf h)
=
J_{\mathbf h}^{\top}\mathbf Q
+
J_{\mathbf Q}\mathbf h
+
\dot{\mathcal N}_{\mathbf h}\mathbf Q,
\]
we obtain the following single-field transport contribution:
\begin{equation*}
\begin{aligned}
\widehat{\mathcal{A}}^{\mathrm{p}}(J_{\mathbf{h}}^{\top} \mathbf{E} + J_{\mathbf{E}} \mathbf{h}, \mathbf{V}) &= \int_\Omega \Bigl[ \nabla \times (J_{\mathbf{h}}^{\top} \mathbf{E} + J_{\mathbf{E}} \mathbf{h}) \cdot \overline{\mathbf{V}} - (J_{\mathbf{h}}^{\top} \mathbf{E} + J_{\mathbf{E}} \mathbf{h}) \cdot \nabla \times \overline{\mathbf{V}} \Bigr] \, \mathrm{d}x \\
&\quad + \int_\Gamma \mathbf{g}_{\boldsymbol{z}} \bigl( \cdot, \gamma_T \mathbf{E}; \gamma_T (J_{\mathbf{h}}^{\top} \mathbf{E} + J_{\mathbf{E}} \mathbf{h}) \bigr) \cdot \gamma_T\overline{\mathbf{V}} \, \mathrm{d}s.
\end{aligned}
\end{equation*}
Given that $\operatorname{supp} \mathbf{h} \Subset B_R$, the contribution from the artificial boundary $\Gamma_R$ vanishes identically. Consequently, we decompose the variational form for the shape derivative as
\[
\widehat{\mathcal{A}}^{\mathrm{p}}(\delta\mathbf{E}, \mathbf{V}) = A + B,
\]
where the volume contribution $A$ and boundary contribution $B$ are given by
\begin{equation*}
A := -\int_\Omega \Bigl[ \nabla \times (J_{\mathbf{h}}^{\top}\mathbf{E} + J_{\mathbf{E}}\mathbf{h}) \cdot \overline{\mathbf{V}} - (J_{\mathbf{h}}^{\top}\mathbf{E} + J_{\mathbf{E}}\mathbf{h}) \cdot \nabla \times \overline{\mathbf{V}} \Bigr] \, \mathrm{d}x 
\end{equation*}
and
\begin{equation*}
\begin{aligned}
B &:= -\int_\Gamma \Bigl[ (\operatorname{div}_\Gamma \mathbf{h}_T + 2\kappa h_n) \mathbf{g}_E + \mathbf{g}_{\boldsymbol{x}}(\cdot, \gamma_T \mathbf{E}) \mathbf{h} \\
&\qquad + \mathbf{g}_{\boldsymbol{z}} \Bigl( \cdot, \gamma_T \mathbf{E}; \, \delta \mathbf{n} \times (\mathbf{E} \times \mathbf{n}) + \mathbf{n} \times (\mathbf{E} \times \delta \mathbf{n}) - \mathbf{n} \times \bigl( (J_{\mathbf{h}}^{\top} \mathbf{E}) \times \mathbf{n} \bigr) \Bigr) \Bigr] \cdot \overline{\mathbf{V}} \, \mathrm{d}s \\
&\quad + \int_\Gamma \Bigl[ (J_{\mathbf{h}} \mathbf{g}_E) \cdot \overline{\mathbf{V}} - \mathbf{g}_E \cdot \bigl( \mathbf{n} \times (\overline{\mathbf{V}} \times \delta \mathbf{n}) \bigr) \Bigr] \, \mathrm{d}s \\
&\quad - \int_\Gamma \mathbf{g}_{\boldsymbol{z}} \Bigl( \cdot, \gamma_T \mathbf{E}; \, \gamma_T(J_{\mathbf{h}}^{\top} \mathbf{E} + J_{\mathbf{E}} \mathbf{h}) \Bigr) \cdot \overline{\mathbf{V}} \, \mathrm{d}s.
\end{aligned}
\end{equation*}

We begin by simplifying the volume contribution $A$. Recalling the identity
\[
J_{\mathbf{h}}^{\top}\mathbf{E} + J_{\mathbf{E}}\mathbf{h} = (J_{\mathbf{E}} - J_{\mathbf{E}}^{\top})\mathbf{h} + \nabla(\mathbf{h} \cdot \mathbf{E}),
\]
and observing that the skew-symmetric part of the Jacobian $(J_{\mathbf{E}} - J_{\mathbf{E}}^{\top})\mathbf{h}$ coincides with the cross product $(\nabla \times \mathbf{E}) \times \mathbf{h}$, we have
\[
J_{\mathbf{h}}^{\top}\mathbf{E} + J_{\mathbf{E}}\mathbf{h} = (\nabla \times \mathbf{E}) \times \mathbf{h} + \nabla(\mathbf{h} \cdot \mathbf{E}).
\]
An application of the generalized Stokes theorem, combined with the fact that $\mathbf{h}$ vanishes in a neighborhood of $\Gamma_R$, yields the boundary representation
\begin{equation*}
\begin{aligned}
A &= \int_\Gamma \mathbf{n} \times (J_{\mathbf{h}}^{\top}\mathbf{E} + J_{\mathbf{E}}\mathbf{h}) \cdot \overline{\mathbf{V}} \, \mathrm{d}s \\
&= \int_\Gamma \Bigl[ \mathbf{n} \times \bigl( (\nabla \times \mathbf{E}) \times \mathbf{h} \bigr) + \mathbf{n} \times \nabla(\mathbf{h} \cdot \mathbf{E}) \Bigr] \cdot \overline{\mathbf{V}} \, \mathrm{d}s.
\end{aligned}
\end{equation*}
Invoking the orthogonal decompositions $\mathbf{h} = \mathbf{h}_T + h_n \mathbf{n}$ and $\mathbf{E} = \gamma_T \mathbf{E} + E_n \mathbf{n}$, and utilizing the identities
\[
\mathbf{n} \times \bigl( (\nabla \times \mathbf{E}) \times \mathbf{h} \bigr) = h_n \gamma_T(\nabla \times \mathbf{E}) - (\mathbf{n} \cdot \nabla \times \mathbf{E}) \mathbf{h}_T
\]
and
\[
\mathbf{n} \times \nabla(\mathbf{h} \cdot \mathbf{E}) = \operatorname{Curl}_\Gamma(h_n E_n) + \operatorname{Curl}_\Gamma(\mathbf{h}_T \cdot \gamma_T \mathbf{E}),
\]
the expression for $A$ reduces to
\begin{equation*}
A = \int_\Gamma \Bigl[ h_n \gamma_T(\nabla \times \mathbf{E}) + \operatorname{Curl}_\Gamma(h_n E_n) + \operatorname{Curl}_\Gamma(\mathbf{h}_T \cdot \gamma_T \mathbf{E}) - (\mathbf{n} \cdot \nabla \times \mathbf{E}) \mathbf{h}_T \Bigr] \cdot \gamma_T\overline{\mathbf{V}} \, \mathrm{d}s.
\end{equation*}
Finally, noting that $\mathbf{n} \cdot (\nabla \times \mathbf{U}) = -\operatorname{div}_\Gamma(\mathbf{n} \times \mathbf{U})$, the NPEC condition $\mathbf{n} \times \mathbf{E} = \mathbf{g}_E$ implies $\mathbf{n} \cdot \nabla \times \mathbf{E} = -\operatorname{div}_\Gamma \mathbf{g}_E$. Consequently, the volume contribution becomes the boundary integral
\begin{equation*}
A = \int_\Gamma \Bigl[ h_n \gamma_T(\nabla \times \mathbf{E}) + \operatorname{Curl}_\Gamma(h_n E_n) + \operatorname{Curl}_\Gamma(\mathbf{h}_T \cdot \gamma_T \mathbf{E}) + (\operatorname{div}_\Gamma \mathbf{g}_E) \mathbf{h}_T \Bigr] \cdot \gamma_T\overline{\mathbf{V}} \, \mathrm{d}s.
\end{equation*}

We next  address the boundary contribution $B$. Since $\gamma_T(J_{\mathbf{h}}^{\top}\mathbf{E}) = \mathbf{n} \times \bigl( (J_{\mathbf{h}}^{\top}\mathbf{E}) \times \mathbf{n} \bigr)$, the terms involving $J_{\mathbf{h}}^{\top}\mathbf{E}$ within the argument of $\mathbf{g}_{\boldsymbol{z}}$ cancel. Consequently, $B$ simplifies to
\begin{equation*}
\begin{aligned}
B &= -\int_\Gamma \Bigl[ (\operatorname{div}_\Gamma\mathbf{h}_T + 2\kappa h_n) \mathbf{g}_E + \mathbf{g}_{\boldsymbol{x}}(\cdot,\gamma_T\mathbf{E})\mathbf{h} \\
&\qquad + \mathbf{g}_{\boldsymbol{z}} \Bigl( \cdot, \gamma_T\mathbf{E}; \delta\mathbf{n}\times(\mathbf{E}\times\mathbf{n}) + \mathbf{n}\times(\mathbf{E}\times\delta\mathbf{n}) + \mathbf{n}\times\bigl((J_{\mathbf{E}}\mathbf{h})\times\mathbf{n}\bigr) \Bigr) \Bigr] \cdot \gamma_T\overline{\mathbf{V}} \, \mathrm{d}s \\
&\quad +\int_\Gamma \Bigl[ (J_{\mathbf{h}}\mathbf{g}_E) \cdot \overline{\mathbf{V}} - \mathbf{g}_E \cdot \bigl( \mathbf{n} \times (\overline{\mathbf{V}} \times \delta\mathbf{n}) \bigr) \Bigr] \, \mathrm{d}s.
\end{aligned}
\end{equation*}

Substituting the variation of the unit normal $\delta\mathbf{n} = -\nabla_\Gamma h_n + J_{\mathbf{n}}\mathbf{h}_T$ and decomposing the terms $J_{\mathbf{h}}\mathbf{g}_E$ and the derivative operators, we resolve $B$ into its normal and tangential components. The normal component, $B_{\mathbf{n}}$, is given by
\begin{equation*}
B_{\mathbf{n}} = -\int_\Gamma \Bigl[ h_n \bigl( \tfrac{\partial \mathbf{g}_E}{\partial\mathbf{n}} - \mathcal{S}\mathbf{g}_E \bigr) + h_n \mathfrak{H} \mathbf{g}_E + \mathbf{g}_{\boldsymbol{z}} \bigl( \cdot, \gamma_T\mathbf{E}; E_n \nabla_\Gamma h_n \bigr) \Bigr] \cdot \gamma_T\overline{\mathbf{V}} \, \mathrm{d}s,
\end{equation*}
where $\mathcal{S} = J_{\mathbf{n}}$ denotes the shape operator (Weingarten map) and $\mathfrak{H} = \operatorname{div}_\Gamma \mathbf{n} = \operatorname{tr}\mathcal{S}$ is the additive curvature. In this derivation, we have exploited the fact that $\mathbf{g}$ depends on $\mathbf{E}$ exclusively through its tangential trace; consequently, purely normal increments in the second argument do not contribute to the partial derivative $\mathbf{g}_{\boldsymbol{z}}$.

The tangential component of $B$ is expressed as
\begin{equation*}
B_T = -\int_\Gamma \Bigl[ \mathbf{g}_E \operatorname{div}_\Gamma\mathbf{h}_T - (\nabla_\Gamma\mathbf{h}_T)\mathbf{g}_E + (\nabla_\Gamma\mathbf{g}_E)\mathbf{h}_T \Bigr] \cdot \gamma_T\overline{\mathbf{V}} \, \mathrm{d}s.
\end{equation*}
This representation follows from the surface divergence identities
\begin{equation*}
\begin{aligned}
\operatorname{div}_\Gamma \bigl(\mathbf{g}_E(\mathbf{h}_T \cdot \gamma_T\overline{\mathbf{V}})\bigr) &= (\operatorname{div}_\Gamma\mathbf{g}_E) (\mathbf{h}_T \cdot \gamma_T\overline{\mathbf{V}}) + (\nabla_\Gamma{\mathbf{h}_T}\mathbf{g}_E) \cdot \gamma_T\overline{\mathbf{V}} + (\nabla_\Gamma{{\gamma_T\overline{\mathbf{V}}}}\mathbf{g}_E) \cdot \mathbf{h}_T, \\
\operatorname{div}_\Gamma \bigl(\mathbf{h}_T(\mathbf{g}_E \cdot \gamma_T\overline{\mathbf{V}})\bigr) &= (\operatorname{div}_\Gamma\mathbf{h}_T) (\mathbf{g}_E \cdot \gamma_T\overline{\mathbf{V}}) + (\nabla_\Gamma{\mathbf{g}_E}\mathbf{h}_T) \cdot \gamma_T\overline{\mathbf{V}} + (\nabla_\Gamma{\gamma_T\overline{\mathbf{V}}}\mathbf{h}_T) \cdot \mathbf{g}_E,
\end{aligned}
\end{equation*}
together with the relation $\operatorname{Curl}_\Gamma \psi = \mathbf{n} \times \nabla_\Gamma \psi$. A direct calculation reveals that $B_T$ precisely cancels the tangential contributions arising from the volume term $A$.

Consequently, all terms involving the tangential variation $\mathbf{h}_T$ vanish, a result consistent with the Hadamard structure theorem for shape derivatives,leaving the expression dependent solely on the normal component $h_n$. Combining the results for $A$ and $B$ yields the variational identity
\begin{equation*}
\widehat{\mathcal{A}}^{\mathrm{p}}(\delta\mathbf{E}, \mathbf{V}) = -\int_\Gamma \mathcal{F}_{\mathbf{h}} \cdot \gamma_T\overline{\mathbf{V}} \, \mathrm{d}s,
\end{equation*}
where $\mathcal{F}_{\mathbf{h}}$ is the boundary data defined in \eqref{eq:npec_shape_rhs}. This serves as the weak formulation of the boundary condition 
\begin{equation*}
\mathbf{n} \times \delta\mathbf{E} - \mathbf{g}_{\boldsymbol{z}} \bigl( \boldsymbol{x}, \gamma_T\mathbf{E}; \gamma_T\delta\mathbf{E} \bigr) = \mathcal{F}_{\mathbf{h}} \quad \text{on } \Gamma.
\end{equation*}

Finally, since $\mathbf{h}$ vanishes in a neighborhood of the artificial boundary $\Gamma_R$, the condition \eqref{eq:npec_shape_radiation} is inherited directly from the material derivative. The uniqueness of $\delta\mathbf{E}$ follows from the well-posedness of the $\mathbb{R}$-linearized BVP, which concludes the proof.

\end{proof}

The analysis of the nonlinear transmission condition is obtained by applying
the preceding impedance calculation on the two sides of the interface and
then taking the corresponding jumps. We state the resulting boundary value
problem below and comment on the derivation in the following remark.

\begin{coro}\label{th:shape_derivative_transmission}
Assume that the nonlinear transmission condition \eqref{eq:transmission} holds on the interface \(\Gamma\), the associated linearized transmission problem is well-posed, and $\mathbf{g}$ satisfies the regularity requirements of Assumption~\ref{ass:enhanced_g}. Then the shape derivative $\delta\mathbf{E}$ is the unique weak solution to the system
\begin{equation}
\label{eq:shape_transmission_volume}
\nabla \times \left( \frac{1}{\mu_r} \nabla \times \delta\mathbf{E} \right) - k^2 \varepsilon_r \delta\mathbf{E} = 0 \quad \text{in } \Omega \setminus \Gamma.
\end{equation}
On the interface $\Gamma$, the field $\delta\mathbf{E}$ satisfies the transmission conditions
\begin{equation}
\label{eq:shape_transmission_conditions}
\left\{
\begin{aligned}
\bigl[ \mathbf{n} \times \delta\mathbf{E} \bigr] &= -\operatorname{Curl}_\Gamma \bigl( h_n [ \mathbf{n} \cdot \mathbf{E} ] \bigr) - h_n [ \gamma_T (\nabla \times \mathbf{E}) ], \\[5pt]
\left[ \frac{1}{\mu_r} \mathbf{n} \times (\nabla \times \delta\mathbf{E}) \right] &- \mathbf{g}_{\boldsymbol{z}}(\boldsymbol{x}, \gamma_T \mathbf{E}; \gamma_T \delta\mathbf{E}) = \mathcal{G}_{\mathbf{h}},
\end{aligned}
\right.
\end{equation}
where the boundary data $\mathcal{G}_{\mathbf{h}}$ is defined by
\begin{equation}
\label{eq:shape_transmission_rhs}
\begin{aligned}
\mathcal{G}_{\mathbf{h}} &:= -\operatorname{Curl}_\Gamma \left( h_n \left[ \frac{1}{\mu_r} \operatorname{curl}_\Gamma(\gamma_T\mathbf{E}) \right] \right) - h_n k^2 [\varepsilon_r] \gamma_T \mathbf{E} \\
&\quad + h_n \left( \frac{\partial}{\partial \mathbf{n}} \mathbf{g}(\boldsymbol{x}, \gamma_T \mathbf{E}) - \mathcal{S} \bigl( \mathbf{g}(\boldsymbol{x}, \gamma_T \mathbf{E}) \bigr) \right) + h_n \mathfrak{H} \, \mathbf{g}(\boldsymbol{x}, \gamma_T \mathbf{E}) \\
&\quad + \mathbf{g}_{\boldsymbol{z}} \bigl( \boldsymbol{x}, \gamma_T \mathbf{E}; [ \mathbf{n} \cdot \mathbf{E} ] \nabla_\Gamma h_n \bigr).
\end{aligned}
\end{equation}
On the artificial boundary $\Gamma_R$, the shape derivative satisfies the transparent boundary condition
\begin{equation}
\label{eq:shape_transmission_radiation}
\mathrm{i}k \, \Lambda(\gamma_t \delta\mathbf{E}) = \gamma_t (\nabla \times \delta\mathbf{E}) \quad \text{on } \Gamma_R.
\end{equation}
This condition ensures that $\delta\mathbf{E}$ admits a unique extension as a radiating weak solution to the exterior domain $\mathbb{R}^3 \setminus \overline{D}$.
\end{coro}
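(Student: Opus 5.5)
The plan is to follow the template of the proofs of Theorems~\ref{th:shape_derivative_nibc} and \ref{th:shape_derivative_npec}, starting from the material derivative $\mathbf W$ supplied by Proposition~\ref{prop:material_ntc}. Define the shape derivative separately on each side of $\Gamma$ by
\[
\delta\mathbf E^\pm := \mathbf W - J_{\mathbf h}^{\top}\mathbf E^\pm - (\mathbf h\cdot\nabla)\mathbf E^\pm \quad\text{in } U_\Gamma^\pm,
\]
and extend this definition to the whole of $\Omega$ (away from $\operatorname{supp}\mathbf h$ one simply has $\delta\mathbf E=\mathbf W$). This is legitimate thanks to the piecewise $H^2$ regularity assumed at the beginning of Section~\ref{sec:4}.

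Note that the convective term $J_{\mathbf h}^\top\mathbf E+J_{\mathbf E}\mathbf h$ is no longer in $H(\operatorname{curl};\Omega)$: its tangential trace jumps across $\Gamma$. For this reason $\delta\mathbf E$ can only be piecewise $H(\operatorname{curl})$, and the first transmission condition must be recovered by computing that jump. Since $\mathbf W\in\mathbf X(\Omega)$ has no tangential jump,
\[
[\mathbf n\times\delta\mathbf E] = -[\mathbf n\times(J_{\mathbf h}^\top\mathbf E+J_{\mathbf E}\mathbf h)].
\]
Using the identity $J_{\mathbf h}^\top\mathbf E+J_{\mathbf E}\mathbf h=(\nabla\times\mathbf E)\times\mathbf h+\nabla(\mathbf h\cdot\mathbf E)$ and the two tangential identities already used to compute $A$ in the NPEC proof, this jump equals
\[
[h_n\gamma_T(\nabla\times\mathbf E)] + \operatorname{Curl}_\Gamma(h_n[E_n]) + \operatorname{Curl}_\Gamma(\mathbf h_T\cdot[\gamma_T\mathbf E]) - [\mathbf n\cdot\nabla\times\mathbf E]\,\mathbf h_T .
\]
The last two terms vanish: $[\gamma_T\mathbf E]=0$ by $H(\operatorname{curl})$-conformity, and $[\mathbf n\cdot\nabla\times\mathbf E]=-\operatorname{div}_\Gamma[\mathbf n\times\mathbf E]=0$. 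What remains is exactly the first line of \eqref{eq:shape_transmission_conditions}, and the Hadamard structure is visible at once.

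The interior equation \eqref{eq:shape_transmission_volume} follows by testing with $\mathbf V$ compactly supported in $D$ or in $\Omega\setminus\overline D$. On each subdomain this is the volume computation of Theorem~\ref{th:shape_derivative_nibc}, namely identity \eqref{eq:curl_material_identity} with the constant coefficients $\mu_r^{\pm},\varepsilon_r^\pm$ inserted, which shows that the transport term absorbs the $\dot{\mathcal M}_{\mathbf h},\dot{\mathcal N}_{\mathbf h}$ contributions of $\mathcal L_{\mathbf h}^{\rm tr}$. The radiation condition \eqref{eq:shape_transmission_radiation} is inherited from $\mathbf W$, because $\mathbf h$ vanishes near $\Gamma_R$.

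For the magnetic jump condition I will test $\widehat{\mathcal A}^{\rm tr}(\delta\mathbf E,\mathbf V)=\mathcal L_{\mathbf h}^{\rm tr}(\mathbf V)-\widehat{\mathcal A}^{\rm tr}(J_{\mathbf h}^\top\mathbf E+J_{\mathbf E}\mathbf h,\mathbf V)$ with general $\mathbf V\in\mathbf X(\Omega)$, and integrate the volume part by parts separately in $D$ and in $\Omega\setminus\overline D$. Each side then yields a boundary expression of the form \eqref{eq:A_final_decomposed}, weighted by $\mu_r^{-1}$ and $k^2\varepsilon_r$, and with orientation $\pm$. Summing the two sides produces the jumps $-\operatorname{Curl}_\Gamma(h_n[\mu_r^{-1}\operatorname{curl}_\Gamma\gamma_T\mathbf E])-k^2h_n[\varepsilon_r]\gamma_T\mathbf E$ together with $\mathbf h_T$-terms containing $[E_n]$ and $[\mu_r^{-1}\mathbf n\times\nabla\times\mathbf E]=\mathbf g_E$. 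The interface terms with $\mathbf g$ are identical to the term $I_2$ of the impedance proof (without the $\lambda$-part), because $\gamma_T\mathbf E$ is single-valued. This gives the $h_n$-terms in $\mathcal G_{\mathbf h}$, with $E_n$ in the $\mathbf g_{\boldsymbol z}$-term replaced by $[\mathbf n\cdot\mathbf E]$. One point needs care here: the argument of $\mathbf g_{\boldsymbol z}$ contains $\gamma_T\delta\mathbf E$, which is two-valued. Its jump is the known quantity computed above, so I would fix the convention that $\gamma_T\delta\mathbf E$ denotes the exterior trace, or the material-derivative-consistent combination, and absorb the difference into the data.

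The main obstacle is the Hadamard cancellation of $\mathbf h_T$ in this two-sided setting. I plan to reproduce the cancellation argument at the end of the proof of Theorem~\ref{th:shape_derivative_nibc}, with $\mathbf b$ replaced by the jump $[\mu_r^{-1}\mathbf n\times\nabla\times\mathbf E]=\mathbf g_E$, and the term $(\mathbf h_T\cdot\nabla\times\mathbf E)(\mathbf n\cdot\nabla\times\overline{\mathbf V})$ replaced by its $\mu_r^{-1}$-weighted jump. For the surface-divergence identities I will use that $\mathbf V$ has a single-valued tangential trace. Finally, the $k^2E_n$ terms will be handled through $k^2[\varepsilon_rE_n]=\operatorname{div}_\Gamma\mathbf g_E$, which is the normal projection of the Maxwell system combined with the jump condition; this parallels the representation of $k^2E_n$ used in the impedance case. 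Uniqueness then follows from the assumed well-posedness of the linearized transmission problem, after the tangential jump has been lifted by a piecewise lifting analogous to $\mathcal R_\Gamma$.
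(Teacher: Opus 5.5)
You follow the paper's own route, which it gives only as the remark after the corollary: run the impedance computation on each side of $\Gamma$, sum with orientations to form jumps, and cancel the $\mathbf h_T$-terms with the surface-divergence identities. Several of your steps are correct and more explicit than the paper: the first interface condition, derived from $[\mathbf n\times\delta\mathbf E]=-[\mathbf n\times(J_{\mathbf h}^{\top}\mathbf E+J_{\mathbf E}\mathbf h)]$; the interior equations; the radiation condition; and the lifting argument.

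\textbf{The gap.} It is your claim that the $\mathbf g$-terms reproduce $\mathcal G_{\mathbf h}$ ``with $E_n$ in the $\mathbf g_{\boldsymbol z}$-term replaced by $[\mathbf n\cdot\mathbf E]$''. The integral $\int_\Gamma\mathbf g(\cdot,\gamma_T\mathbf E)\cdot\gamma_T\overline{\mathbf V}\,\mathrm ds$ occurs once in $\mathcal A^{\rm tr}$, not once per side, so there is nothing to take a jump of.

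Suppose $\gamma_T\delta\mathbf E$ on the left is the trace from side $s\in\{+,-\}$. Then the argument of $\mathbf g_{\boldsymbol z}$ on the right is $\dot\gamma_{T,\mathbf h}\mathbf E+\gamma_T(J_{\mathbf h}^{\top}\mathbf E^s+J_{\mathbf E^s}\mathbf h)$. Here $\dot\gamma_{T,\mathbf h}\mathbf E$ is the linearization of $P_{\mathbf h}J_\varphi^{-\top}\gamma_T\mathbf E$, hence single-valued. The tangential part of the sum is
\[
(\mathbf n\cdot\mathbf E^s)\nabla_\Gamma h_n+h_n\gamma_T(\partial_{\mathbf n}\mathbf E^s)+(\nabla_\Gamma\gamma_T\mathbf E)\mathbf h_T .
\]
So the data contain $\mathbf g_{\boldsymbol z}(\cdot,\gamma_T\mathbf E;(\mathbf n\cdot\mathbf E^s)\nabla_\Gamma h_n)$ and $h_n\partial_{\mathbf n}\mathbf g$, both from the same side $s$.

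Changing the convention does not help:
\begin{itemize}
\item Switching sides changes the left-hand side by $\pm\mathbf g_{\boldsymbol z}(\cdot,\gamma_T\mathbf E;[\gamma_T\delta\mathbf E])$, where $[\gamma_T\delta\mathbf E]=-[\mathbf n\cdot\mathbf E]\nabla_\Gamma h_n-h_n[\gamma_T\partial_{\mathbf n}\mathbf E]$. This only trades $\mathbf n\cdot\mathbf E^-$ for $\mathbf n\cdot\mathbf E^+$.
\item Putting the single-valued combination $\gamma_T\delta\mathbf E^s+(\mathbf n\cdot\mathbf E^s)\nabla_\Gamma h_n+h_n\gamma_T\partial_{\mathbf n}\mathbf E^s$ on the left removes that term from the data altogether.
\end{itemize}
No choice of trace yields the bare jump $[\mathbf n\cdot\mathbf E]$, so ``absorbing the difference into the data'' cannot close this step.

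What your computation actually proves is the second condition in \eqref{eq:shape_transmission_conditions} with $\gamma_T\delta\mathbf E$, $\partial_{\mathbf n}\mathbf g$ and $\mathbf n\cdot\mathbf E$ all taken from one and the same side. The paper's ``take the jump'' remark has the same blind spot. You should prove that one-sided version rather than force agreement with $\mathcal G_{\mathbf h}$ as printed.

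\textbf{A sign error.} This one, as written, would break your $\mathbf h_T$-cancellation. On each side $\mathbf n\cdot\nabla\times\mathbf U=-\operatorname{div}_\Gamma(\mathbf n\times\mathbf U)$, and $[\mathbf n\times\mu_r^{-1}\nabla\times\mathbf E]=\mathbf g_E$. Hence $k^2[\varepsilon_r\,\mathbf n\cdot\mathbf E]=-\operatorname{div}_\Gamma\mathbf g_E$, not $+\operatorname{div}_\Gamma\mathbf g_E$; compare $k^2E_n=\operatorname{div}_\Gamma(\mathrm ik\lambda\gamma_T\mathbf E-\mathbf g_E)$ in the impedance proof.

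After the $(\mathbf n\cdot\nabla\times\overline{\mathbf V})$-terms cancel (your $\mathbf b\to\mathbf g_E$ step is right), what remains is $\bigl(k^2[\varepsilon_r\,\mathbf n\cdot\mathbf E]+\operatorname{div}_\Gamma\mathbf g_E\bigr)\,\mathbf h_T\cdot\gamma_T\overline{\mathbf V}$. This vanishes only with the correct sign. Note also that the volume term carries $[\varepsilon_r E_n]$, not $[E_n]$.
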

\begin{rema}
The proof of the transmission result is omitted only to avoid repeating the
arguments used for the nonlinear impedance condition. After the covariant
Piola pullback, the transmission problem has the same primal curl--curl
variational structure as the impedance problem, with the only differences
being the piecewise constant coefficients \(\mu_r^{-1}\) and \(\varepsilon_r\),
the use of one-sided traces on the interface, and the appearance of jump
terms in the final boundary data. The transported tangential trace, the
surface Jacobian, and the nonlinear Nemytskii expansion are treated in exactly
the same way as before. Applying the impedance calculation on both sides of
\(\Gamma\) and then taking the jump yields the interface conditions stated in
Corollary~\ref{th:shape_derivative_transmission}. In particular, the tangential
deformation terms cancel by the same surface-divergence identities, and the
resulting shape derivative has the Hadamard structure, depending only on the
normal velocity \(h_n=\mathbf h\cdot\mathbf n\). 
\end{rema}


\end{document}